\numberwithin{equation}{section}
\newtheorem{theorem}{Theorem}[section]
\newtheorem{lemma}[theorem]{Lemma}
\newtheorem{remark}{Remark}[section]
\newcommand{\abs}[1]{\left\vert#1\right\vert}
\newcommand{\norm}[1]{\left\Vert#1\right\Vert}
\newcommand{\norml}[2]{\left\Vert#1\right\Vert_{L^2(#2)}}
\newcommand{\norme}[1]{\left\Vert#1\right\Vert_{E}}
\newcommand{\normc}[1]{\left\Vert\hskip -0.8pt \left\vert #1 \right\vert\hskip -0.8pt\right\Vert_{1,h,H}}
\newcommand{\pd}[1]{\left\langle #1\right\rangle}
\newcommand{\set}[1]{\left\{#1\right\}}
\newcommand{\av}[1]{\left\{#1\right\}}
\newcommand{\jm}[1]{\left[#1\right]}
\newcommand{\M}{\mathcal{M}}
\newcommand{\R}{\mathbb{R}}
\newcommand{\bn}{\mathbf{n}}
\newcommand{\nn}{\nonumber}
\newcommand{\ls}{\lesssim}
\newcommand{\al}{\alpha}
\newcommand{\be}{\beta}
\newcommand{\ep}{\epsilon}
\newcommand{\eps}{\epsilon}
\newcommand{\ga}{\gamma}
\newcommand{\Ga}{\Gamma}
\newcommand{\na}{\nabla}
\newcommand{\Om}{\Omega}
\newcommand{\pa}{\partial}
\newcommand{\rd}{\,\mathrm{d}}
\DeclareMathOperator{\dist}{{dist}}
\DeclareMathOperator{\supp}{{supp}}
\title{A combined finite element and multiscale finite element method for the multiscale elliptic
problems}
\author{
Weibing Deng
\thanks{Department of Mathematics, Nanjing University, Jiangsu,
210093, P.R. China. ({\tt wbdeng@nju.edu.cn}). The work of this author was
partially supported by the the NSF of China grant 10971096 and by the Fundamental Research Funds for the Central Universities 1116020306.}
\and
Haijun Wu\thanks{Department of Mathematics, Nanjing University, Jiangsu,
210093, P.R. China. ({\tt hjw@nju.edu.cn}). The work of the second author was
partially supported by the National Magnetic Confinement Fusion Science Program under grant 2011GB105003 and by the NSF of China grants 11071116, 91130004.}
}
\begin{document}
\date{}
\maketitle


\setcounter{page}{1}

\large
\begin{abstract}
The oversampling multiscale finite element method (MsFEM) is one of the most popular
methods for simulating composite materials and flows in porous media which may have many scales.
But the method may be inapplicable or inefficient in some portions of the computational domain, e.g., near the domain boundary or near long narrow channels inside the domain due to the lack of permeability information outside of the domain or the fact that the high-conductivity features cannot be localized within a coarse-grid block. In this paper we develop a combined finite element and multiscale finite element method (FE-MsFEM), which deals with such portions by using the standard finite element method on a fine mesh and the other portions by the oversampling MsFEM. The transmission conditions across the FE-MSFE interface is treated by the penalty technique.
A rigorous convergence analysis for this special FE-MsFEM is given under the assumption that the diffusion coefficient is periodic. Numerical experiments are carried out for the elliptic equations with periodic and random highly oscillating coefficients, as well as multiscale problems with high contrast channels, to demonstrate the accuracy and efficiency of the proposed method.
\end{abstract}

{\bf Key words.} 
Multiscale problems, oversampling technique, interface penalty, combined finite element and multiscale finite element method

{\bf AMS subject classifications. }
34E13, 
35B27, 
65N12, 
65N15, 
65N30 

\section{Introduction}
 Let $\Omega\subset\R^n, n=2,3 $ be a polyhedral domain, and consider the following elliptic equation
\begin{equation}\label{eproblem}
\left\{\begin{aligned} -\nabla\cdot(\mathbf{a}^\ep(x)\nabla
u_\ep(x)) & = f(x)&&
  \text{in}\,\Omega, \\
  u_\ep(x)&=0&&   \text{on}\,\partial\Omega,
\end{aligned}\right.
\end{equation}
where $0<\epsilon\ll 1$ is a parameter that represents the ratio of the
smallest and largest scales in the problem, and $\mathbf{a}^\ep(x)=(a_{ij}^\ep(x))$ is a
symmetric, positive definite, bounded tensor:
\begin{equation}\label{eq13}
  \lambda |\xi|^2\leq a_{ij}^\ep(x)\xi_i\xi_j\leq\Lambda |\xi|^2\quad
  \forall \xi\in \R^n,\, x\in\bar{\Omega}
\end{equation}
for some positive constants $\lambda$ and $\Lambda$.

Problems of the type
(\ref{eproblem}) are often used to describe the models arising from composite materials and
flows in porous media, which contain many spatial scales. Solving these problems numerically is difficult because of that resolving the smallest scale in problems usually
requires very fine meshes and hence tremendous amount of computer memory
and CPU time.  To overcome this difficulty, many methods have been
designed to solve the problem on meshes that are coarser than the
scale of oscillations. One of the most popular methods is the multiscale
finite element method (MsFEM) \cite{EHW,HW,HWC}, which  takes its origin from the work of Babu\v{s}ka and Osborn \cite{BO1983,BCO1994}. Two main ingredients of the MsFEM are the global formulation of the method such as various finite element methods and the construction of basis functions. The special basis functions which constructed from the local solutions
of the elliptic operator contain the small scale information within each element. By solving the problem (\ref{eproblem})
in the special basis function space, they get a good approximation of the
full fine scale solution. We remark that there are many other methods proposed to solve this type of multiscale problems in the past several decades. See, for instance,
wavelet homogenization techniques \cite{DE,ER}, multigrid
numerical homogenization techniques \cite{FB2,MDH}, the subgrid
upscaling method \cite{Arbo1,Arbo2}, the heterogeneous
multiscale method \cite{EE1,EE3,EMZ}, the residual-free bubble method (or the
variational multiscale method, discontinuous enrichment
method) \cite{BFHR,FHF,FY,FR,Hu,San}, mortar multiscale methods \cite{APWY2007,PWY2002}, and upscaling or numerical homogenization method \cite{dur,Farmer,WEH}. We refer the reader to the book
\cite{EH2009} for an overview and more other references of multiscale numerical methods in the literature, especially a description of some intrinsic connections between most of these methods.

In this paper, we focus on the MsFEM. Many developments and extensions of the MsFEM have been done in the past ten years. See for example, the mixed MsFEM \cite{CH2002, Aarnes2004}, the MsFEMs for nonlinear problems \cite{EHG2004, EP2004}, the Petro-Galerkin MsFEM \cite{HWZh}, the MsFEMs using limited global information \cite{EGHE2006, OZh2007},  and the multiscale finite volume method \cite{JLT2003}.  In \cite{HWC}, it is shown that there is a resonance error between the grid scale and the scales
of the continuous problem. Especially, for the two-scale problem, the resonance error manifests as a
ratio between the wavelength of the small scale oscillation and the grid size; the error
becomes large when the two scales are close. The scale resonance is a fundamental
difficulty caused by the mismatch between the local construction of the multiscale basis
functions and the global nature of the elliptic problems. This mismatch between the
local solution and the global solution produces a thin boundary layer in the first order
corrector of the local solution.  To overcome the difficulty due to the scale resonance, an oversampling
technique was proposed in \cite{HW, EHW}. The basic idea is computing the local problem in the domain with size larger than the mesh size $H$ and use only the interior
sampled information to construct the basis functions.  By doing this, the influence of the boundary layer in the larger domain on the basis
functions is greatly reduced.

However, for the coarse-gird elements near the boundary, in order to construct the multiscale basis functions, the oversamping MsFEM needs to assume that there is enough information
available outside of the research domain, which is not applicable in practice.
To handle this problem, the natural way is to use the standard  multiscale basis functions instead of the oversampling multiscale basis functions in the coarse-grid elements adjacent to the boundary, hence in this area we don't need to use the information outside the domain.
We call this method as the mixed basis MsFEM. Since in the elements near the boundary,
we use the multiscale basis functions without oversampling technique,
the scale resonance comes out again hence pollute the accuracy.

To overcome this difficulty, we introduce a new method in this paper which can improve the accuracy significantly. The proposed method separates the research area into two sub-domains such that one of them is contained inside the domain with a distance away from the boundary. Then in the interior sub-domain the oversampling multiscale basis functions on a coarse mesh (with mesh size $H$) are used. While, in the other sub-domain which adjacent to the boundary the traditional linear FEM basis functions are used on a mesh (with mesh size $h$) which is fine enough to resolving multiscale features. The difficulty to realize this idea is how to joint the two methods together without losing accuracies of both methods, i.e., how to deal with the transmission condition on the interface between coarse and fine meshes efficiently. Thanks to the penalty techniques used in the interior penalty discontinuous (or continuous) Galerkin methods originated in 1970s \cite{bz73,baker77,dd76,w78,arnold82,abcm01}, we may deal with the transmission condition on the interface by penalizing the jumps from the function values as well as the fluxes of the finite element solution on the fine mesh to those of the oversampling multiscale finite element solution on the coarse mesh. A rigorous and careful analysis is given for the elliptic equation with periodic diffusion coefficient to show that the $H^1$-error of our new method is just the sum of interpolation errors of both methods plus an error term of $O(\frac{H^2}{\sqrt{\ep}})$ introduced by the penalty terms, where $H$ is the mesh size of the coarse mesh. We would like to remark that besides the applications of penalty technique to the interior penalty discontinuous (or continuous) Galerkin methods, this technique is also applied to the Helmholtz equation with high wave number to reduce the pollution error \cite{fw09, fw11, w, zw} and applied to the interface problems to construct high order unfitted mesh methods\cite{m09, wx10}.

The other potential application of our proposed method is to solve the multiscale problems which may have some singularities. For example, the multiscale problem with Dirac function singularities, which stems from the simulation of steady flow transport through highly heterogeneous porous media driven by extraction wells \cite{CY2002}, or the multiscale problems with high-conductivity channels that connect the boundaries of coarse-grid blocks \cite{GE20101,GE20102,EGW2011,OZh2011}. Our new FE-MsFEM may solve such problems by using the traditional FEM on a fine mesh near the singularities (and, of course, near the domain boundary) and using the oversampling MsFEM in the other part of the domain. To demonstrate the performance of the FE-MsFEM, we try to simulate  multiscale elliptic problems which have  fine and long-ranged high-conductivity channels. We remark that this kind of high-conductivity features cannot be localized within a coarse-grid block, hence it is difficult to be handled with standard or oversampling multiscale basis. The numerical results show that the introduced FE-MsFEM can solve the high contrast multiscale elliptic problems efficiently.
The convergence analysis for multiscale problems with singularities and applications of the proposed FE-MsFEM to practical problems such as two-phase flows in porous media and other types of equations are currently under study.

The rest of this paper is organized as follows. In Section~\ref{sec-2}, we formulate the FE-MsFEM for the model problem. In Section~\ref{sec-3}, we review some classical homogenization results for the
elliptic problems and give an interior $H^2$ norm error estimate between the multiscale solution and the homogenized solution with first order corrector.  In Section~\ref{sec-4}, we give some approximation properties for the oversampling MsFE space and the linear FE space, respectively. The $H^{1}$ error estimate of the introduced FE-MsFEM is given in Section~\ref{sec-5}.
In Section~\ref{sec-6} we first give
some numerical examples for both periodic and randomly generated coefficients to demonstrate the accuracy the proposed method, and then apply our method to multiscale elliptic problems which have fine and long-ranged high-conductivity channels to demonstrate the efficiency of the method. Conclusions are drawn in the last section.

Before leaving this section, we fix some notations and conventions
to be used in this paper. In the following, the Einstein summation
convention is used: summation is taken over repeated indices.
$L^2(\Omega)$ denotes the space of square integrable functions
defined in domain $\Omega$. We use the $L^2(\Omega)$ based Sobolev
spaces $H^k(\Omega)$ equipped with norms and seminorms given by:
$$
\|u\|_{H^{k}(\Omega)}^2=\int_\Omega\sum_{|\alpha|\leq k}|D^\alpha
u|^2,\quad  |u|_{H^k(\Omega)}^2=\int_\Omega\sum_{|\alpha|= k}|D^\alpha
u|^2.
$$
$\|u\|_{W^{k,\infty}(\Omega)}$ ($|u|_{W^{k,\infty}(\Omega)}$) is the
$W^{k,\infty}$ norm (seminorm) of $u$ in $\Omega$.  Throughout, $C,
C_1, C_2,\cdots$ denote generic constants, which are independent of
$\ep$, $H$ and $h$ unless otherwise stated. We also use the shorthand notation
$A\lesssim B$ and $B\gtrsim A$ for the inequality $A\leq C B$ and $B\geq CA$. The notation $A\eqsim B$ is equivalent to the statement
$A\ls B$ and $B\ls A.$

\section{FE-MsFEM Formulation}\label{sec-2}

In this section we present our FE-MsFEM. We describe the method only for the case of dealing with the difficulty of lack information outside the domain in the oversampling MsFEM. Of course, the formulation can easily be extended to the case of dealing with singularities.

We first separate the research area $\Omega$ into two sub-domains $\Omega_1$ and $\Omega_2$ such that $\Om_2\subset\subset\Om$ and $\Omega=\Omega_1\cup\Omega_2\cup\Gamma$, where $\Gamma=\partial\Omega_1\cap\partial\Omega_2$ is the interface of $\Omega_1$ and $\Omega_2$ (cf. Fig.~\ref{fig:1}). For simplicity, we assume that the length/area of $\Ga$ satisfies $|\Ga|= O(1)$.
Let ${\cal M}_h$ be a triangulation of the domain $\Omega_1$ and ${\cal M}_H$ be a  triangulation of the domain $\Omega_2$, and denote $\Ga_h$ and $\Ga_H$ the two partitions of the interface $\Ga$ induced by ${\cal M}_h$ and ${\cal M}_H$, respectively. We assume that on the interface $\Ga$, ${\cal M}_H$ and ${\cal M}_h$ satisfy the matching condition that $\Ga_h$ is a refinement of $\Ga_H$.  Clearly, each edge/face  in $\Gamma_{{H}}$ is composed of some edges/faces in $\Gamma_{{h}}$. Combining the two triangulations together, we define ${\cal M}_{h,H}$ as the triangulation of $\Omega$ (See Fig.~\ref{fig:1} for an illustration of triangulation). For any element $K\in{\cal M}_{h}$ (or $K\in{\cal M}_{H}$), we define $h_K$ (or $H_K$) as $\mathrm{diam}(K)$. Similarly, for each edge/face $e$ of $K_e\in{\cal M}_{h}$ (or $E$ of $K_E\in{\cal M}_{H}$ ), define $h_e$ as $\mathrm{diam}(e)$ (or $H_E$ as $\mathrm{diam}(E)$). Denote by $h=\max_{K\in\M_h}h_K$ and $H=\max_{K\in\M_H}H_K$. We assume that $h<\epsilon<H$, that $\{\M_H\}$ and $\{\M_h\}$ are shape-regular and quasi-uniform.

\begin{figure}[htp]
\centerline{\includegraphics[width=0.5\textwidth]{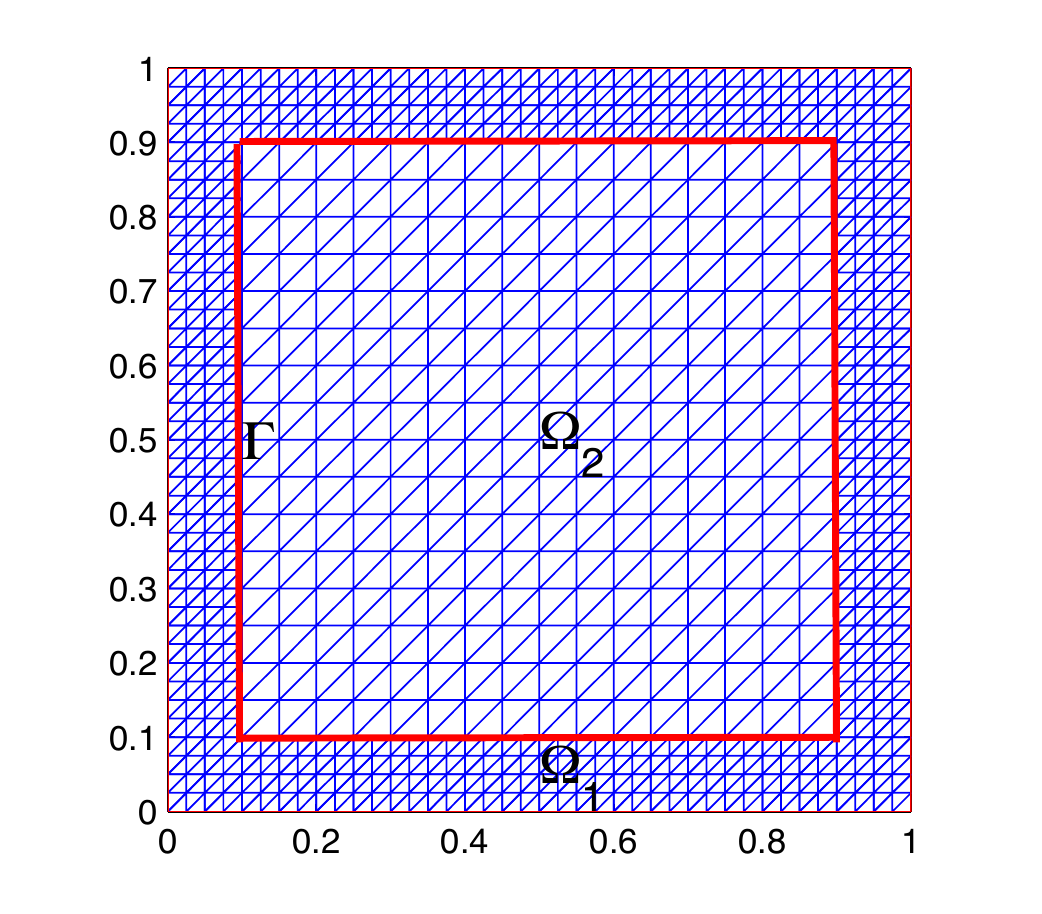}}
\caption{A separation of the domain and a sample mesh. }
\label{fig:1}
\end{figure}

 For any point on $\Ga$, we associate a unit normal $\mathbf{n}$, which is oriented from $\Omega_1$ to $\Omega_2$.
We also define the jump $\jm{v}$  and average $\av{v}$ of $v$ on the interface $\Ga$
 as
 \begin{equation}\label{eja}
    \jm{v}:=v|_{\Om_1}-v|_{\Om_2},\qquad \av{v}:=\frac{v|_{\Om_1}+v|_{\Om_2}}{2}.
 \end{equation}

  Introduce the  ``energy" space
\begin{equation}\label{eV}
    V:= \set{v: \;v|_{\Om_i}=v_i,\; \text{ where } v_{i}\in H_0^1(\Om)\cap H^s(\Om),  \,i=1, 2}, \text{ for some } s>\frac32.
\end{equation}
Testing the elliptic problem \eqref{eproblem} by any $v\in V$, using integration by parts, and using the identity $\jm{vw}=\av{v}\jm{w}+\jm{v}\av{w}$,  we obtain
\begin{align*}
    \sum_{i=1}^2 \int_{\Om_i}\mathbf{a}^\ep\na u_\ep\cdot\na v -\int_\Ga \av{\mathbf{a}^\ep\na u_\ep\cdot\bn}\jm{v} =\int_{\Om} fv .
\end{align*}
Define the bilinear form $A_\be(\cdot,\cdot)$ on $V\times V$:
\begin{align}
A_\beta(u,v) :=&\sum_{K\in {\cal M}_{h,H}} \int_{K}\mathbf{a}^\ep\na u\cdot\na v \label{eah}\\
&-\sum_{e\in\Ga_h}\int_{e} \Big(\av{\mathbf{a}^\ep\na u\cdot\bn}\jm{v}
+\be \jm{u}\av{\mathbf{a}^\ep\na v\cdot\bn}\Big) \\
&+  J_0(u,v)+  J_1(u,v),\nn \\
J_0(u,v):=&\sum_{e\in\Ga_h}\frac{\ga_0}{\rho}\int_{e} \jm{u}\jm{v} ,\label{eJ0}\\
J_1(u,v):=&\sum_{e\in\Ga_h}{\ga_1\,\rho}\int_e
\jm{\mathbf{a}^\ep\na u\cdot\bn}\jm{\mathbf{a}^\ep\na v\cdot\bn} ,\label{eJ1}
\end{align}
where $\be$ is a real number such as $-1,0,1$, and $\ga_0, \ga_1, \rho>0$ will be specified later.  Define further the linear form $F(\cdot)$ on $V$:
\begin{equation*}
 F(v):=\int_{\Om} fv .
\end{equation*}
It is easy to check that the solution $u_\ep$ to the problem \eqref{eproblem} satisfies the following formulation:
\begin{equation}\label{evp}
A_\beta(u_\ep,v) =F(v) \qquad\forall\, v\in V.
\end{equation}

To formulate the FE-MsFEM, we need the oversampling MsFE space on ${\cal M}_H$ defined as follows (cf. \cite{CW2010,HW,EH2009}).
For any $K\in {\cal M}_H$ with nodes $\{x_i^K\}_{i=1}^{n+1}$, let
$\{\varphi_i^K\}^{n+1}_{i=1}$ be the basis of $P_1(K)$ satisfying
$\varphi_i^K(x_j^K)=\delta_{ij}, 1\le i,j\le n+1$, where $\delta_{ij}$ stands for the Kronecker's symbol. For any $K\in {\cal M}_H,$ we denote by
$S=S(K)$ a macro-element (simplex) which contains $K$ and satisfies that
$H_S\leq C_1H_K$ and $\text{dist} (K,\partial S)\geq C_0 H_K$, where
 $C_1>0$ is independent of $H_K$ and $C_0$ is from \eqref{eC0}. We assume that the macro-elements $S(K)$ are also shape-regular. Denote by $\{\varphi_i^S\}_{i=1}^{n+1}$ the nodal basis of $P_1(S)$ such
that $\varphi_i^S(x_j^S)=\delta_{ij}, 1\le i,j\le n+1,$ where $x_j^S$ are vertices of $S$.

Let $\psi_i^S\in H^1(S), i=1,\cdots,n+1,$  be the solution of
the problem
\begin{equation}\label{overbase}
  -\nabla\cdot (\mathbf{a}^\ep\nabla\psi_i^S)=0\qquad \text{ in } S,\qquad \psi_i^S|_{\partial S}=\varphi_i^S.
\end{equation}
 The oversampling
multiscale finite element basis functions over $K$ is defined by
\begin{equation}\label{coefcij1}
 \bar{ \psi_i}^K=c_{ij}^K\psi_j^S|_K\qquad \text{ in } K,
\end{equation}
with the constants so chosen that
\begin{equation}\label{coefcij2}
  \varphi_i^K=c_{ij}^K\varphi_j^S|_K\qquad \text{ in } K.
\end{equation}
The existence of the constants $c_{ij}^K$ is guaranteed because
$\{\varphi_j^S\}_{j=1}^{n+1}$ also forms a basis of $P_1(K)$.

Let $\text{OMS}(K)=\mathrm{span\,}\{\bar{\psi_i}^K\}_{i=1}^{n+1}$ be the set of space functions on $K$. Define
the projection $\Pi_K:\text{OMS}(K)\rightarrow P_1(K)$ as
\begin{equation*}
  \Pi_K\psi=c_i\varphi_i^K\qquad \text{ if }\qquad \psi=c_i\bar{\psi_i}^K\in \text{OMS}(K).
\end{equation*}
Introduce the space of discontinuous piecewise ``OMS" functions  and the space of discontinuous piecewise linear functions:
\begin{align*}
  \bar{X}_H&=\{\psi_H:\; \psi_H|_K\in\text{OMS}(K)\ \ \forall K\in {\cal M}_H \},\\
  \bar{W}_H&=\{w_H:\; w_H|_K\in P_1(K)\ \ \forall K\in {\cal M}_H \}.
\end{align*}
Define $\Pi_H:\bar{X}_H\rightarrow\bar{W}_H$ through the relation
 \begin{equation}
 \Pi_H\psi_H|_K=\Pi_K\psi_H\ \ \mbox{for any $K\in{\cal M}_H , \psi_H\in\bar{X}_H$}.
 \end{equation}
The oversampling multiscale finite element space on $\M_H$ is then defined as
 \begin{equation*}
  X_H=\{\psi_H\in\bar{X}_H:\Pi_H\psi_H\in W_H\subset H^1(\Omega_2)\},
\end{equation*}
where $W_H=\bar{W}_H\cap H^1(\Om_2)$ is the $H^1$-conforming linear finite element space over $ {\cal M}_H$. In general, $X_H\not\subset H^1(\Omega_2)$ and the requirement
$\Pi_H\psi_H\in W_H$ is to impose certain continuity of the
functions $\psi_H\in X_H$ across the inter-element boundaries. According to the definition of $\Pi_K$, we have $\Pi_K \bar{\psi_i}^K= \varphi_i^K$. Since $\varphi_i^K$ is continuous across the element, this above requirement $\Pi_H\psi_H\in W_H$ is satisfied naturally since in each node we only have one freedom ( unknowns ).

Denote by $W_h$ the $H^1$-conforming linear finite element space over $ {\cal M}_h$ and by
\begin{equation}\label{wh0}
    W_h^0:=\set{w_h\in W_h : \;w_h=0\;  \mathrm{on}\; \pa\Om_1/\Ga}.
\end{equation}

We define the FE-MsFE approximation
space $V_{h,H}$ as
\begin{equation}\label{eVhp}
V_{h,H}:=\set{v_{h,H}: \;v_{h,H}|_{\Om_1}=v_h,\; v_{h,H}|_{\Om_2}=v_H,\; \text{ where } v_{h}\in W_h^0,\; v_H\in X_H}.
\end{equation}
Note that $V_{h,H}\not\subset V$.
We are now ready to define
the FE-MsFEM inspired by the formulation
\eqref{evp}:
Find $u_{h,H}\in V_{h,H}$ such that
\begin{equation}\label{eipfem}
A_\beta(u_{h,H}, v_{h,H}) =F(v_{h,H}) \qquad\forall\,  v_{h,H}\in V_{h,H}.
\end{equation}

\begin{remark}\label{remark1} (a) If $\be=1$, then the the bilinear form $A_\be$ is symmetric and, as a consequence, the stiffness matrix is symmetric as well. If $\be\neq 1$, e.g., $\be=-1$, then the method is nonsymmetric.

(b) The parameter $\rho>0$ satisfies that $\rho\leq \ep$. In fact, it is chosen as $\ep$ in our later error analysis, while in practical computation, it may be chosen as the mesh size $h$.
\end{remark}

For further error analysis, we introduce several concepts related to the interface $\Ga$ and some discrete norms.
 Define the set of elements accompanying with the interface partition $\Ga_h$ (or $\Ga_H$ ) as following:
  \begin{align}
    K_{\Gamma_h}&:= \{ K\in {\cal M}_h:\; K \text{ has an edge/face  in } \Ga_h\},\label{eKGh}\\
    K_{\Gamma_H}&:= \{ K\in {\cal M}_H:\; K \text{ has an edge/face  in } \Ga_H\}.\label{eKGH}
  \end{align}
Clearly, the number of elements in  $K_{\Gamma_h}$ is $O(\frac{1}{h^{n-1}})$ and the number of elements in  $K_{\Gamma_H}$ is $O(\frac{1}{H^{n-1}})$. Moreover, we assume that
\begin{align}\label{eC0}
\text{dist}\{\Ga,\pa\Om\}\ge C_0 H\ge h+2\ep>0,
\end{align}
 where $C_0$ is a constant. Thus, we can define a narrow subdomain $\Om_\Ga\subset\subset\Om$ surrounding $\Ga$ as
      \begin{align}\label{eog}\Om_\Ga:=\Ga&\cup\{x:\, x\in\Om_1,\text{ dist}(x,\Ga)<h+2\ep\}\\
      &\cup\{x:\, x\in\Om_2,\text{ dist}(x,\Ga)<H+2\ep\}.\nn
       \end{align}
Denote by
\begin{align}\label{eogHh}
\Om_{\Ga_H}=\cup\{K:\,K\in K_{\Ga_H}\},\qquad \Om_{\Ga_h}=\cup\{K:\,K\in K_{\Ga_h}\}.
\end{align}
It is clear that $\Om_{\Ga_H}, \Om_{\Ga_h}\subset\Om_\Ga$ and $\text{dist}(\Om_{\Ga_H},\pa\Om_\Ga)$, $\text{dist}(\Om_{\Ga_h},\pa\Om_\Ga)\ge 2\ep$, repectively. 


 Denote by
 \begin{equation*}
 \norm{v}_{1,H}=\left(\sum_{K\in\M_H}\|(\mathbf{a}^\ep)^{1/2}\na v\|_{L^2(K)}^2\right)^{1/2}\ \ \ \ \forall
 \, v \in\prod_{K\in\M_H}H^1(K).
 \end{equation*}
We introduce the following energy norm and the broken norm on the space $V_{h,H}$:
\begin{align}
\norme{v}:=&\left(\norm{v}_{1,H}^2+ \norml{(\mathbf{a}^\ep)^{1/2}\na v}{\Om_1}^2\right)^{1/2},\label{enorma} \\
\normc{v}:=&\bigg(\norme{v}^2+\sum_{e\in\Ga_h} \frac{\ga_0}{\rho}\norml{\jm{v}}{e}^2+\sum_{e\in\Ga_h}{\ga_1\,\rho}\norml{\jm{\mathbf{a}^\ep\na v\cdot\bn}}{e}^2\label{enormb}\\
&\hskip 31pt+\sum_{e\in\Ga_h}
\frac{\rho}{\ga_0}\norml{\av{\mathbf{a}^\ep\na v\cdot
\bn}}{e}^2\bigg)^{1/2}. \notag
\end{align}

\section{The homogenization results}\label{sec-3}
In this section, we assume that
$\mathbf{a}^\ep(x)$ has the form $\mathbf{a}(x/\ep)$. Moreover, we assume that $a_{ij}(y)\in C_p^1(\mathbb{R}^n)$, where $C_p^1(\mathbb{R}^n)$ stands
for the collection of all $C^1(\mathbb{R}^n)$ periodic functions
with respect to the unit cube $Y$. It is shown that under these
assumptions (cf. \cite{BLP,JKO}), $u_\ep$ converges in a
suitable topology to the solution of the homogenized equation
 \begin{equation}\label{ehomoeqn}
\left\{\begin{aligned} -\nabla\cdot(\mathbf{a}^*\nabla u_0(x)) & =
f(x)&&
  \text{in}\,\Omega, \\
  u_0(x)&=0&&   \text{on}\,\partial\Omega,
\end{aligned}\right.
\end{equation}
where
\begin{equation}\label{eq15}
a_{ij}^*=\frac{1}{|Y|}\int_Y
a_{ik}(y)\left(\delta_{kj}+\frac{\partial\chi^j}{\partial
y_k}(y)\right)\rd y.
\end{equation}
Here $\chi^j$ is the periodic solution of the cell problem
\begin{equation}\label{eq16}
-\nabla_y\cdot(\mathbf{a}(y)\nabla_y
\chi^j(y))=\nabla_y\cdot(\mathbf{a}(y)\mathbf{e}_j),\quad
j=1,\cdots,n
\end{equation}
with zero mean, i.e., $\int_Y\chi^jdy=0$, and $\mathbf{e}_j$ is the
unit vector in the $j$th direction.
The variational form of the problem (\ref{ehomoeqn}) is to
find $u_0(x)\in H_0^1(\Omega)$ such that
\begin{equation}\label{e18}
(\mathbf{a}^*\nabla u_0, \nabla v)=(f,v) \quad\forall\, v\in H_0^1(\Omega).
\end{equation}
It can be shown that $\mathbf{a}^*$ is positive definite.
Thus by Lax-Milgram lemma, \eqref{e18} has a unique solution. If $f(x)\in L^2(\Om)$, from the regularity theory of elliptic equations, we have
\begin{equation}\label{regularityh2}
    \norm{u_0}_{H^2(\Om)}\leq C\norm{f}_{L^2(\Om)}.
\end{equation}

Let $\theta_\ep$
denote the boundary corrector which is
the solution of
\begin{equation}\label{e16}
\begin{aligned}
       -\na\cdot(\mathbf{a}^\ep\na\theta_\ep) & = 0 &&  \mbox{in }\Omega,\\
       \theta_\ep&=-\chi^j(x/\ep)\frac{\partial
u_0(x)}{\partial x_j} &&  \mbox{on
       }\pa\Omega.
\end{aligned}
\end{equation}
From the Maximum Principle, we have
\begin{equation}\label{maximumtheta}
    \norm{\theta_\ep}_{L^\infty(\Om)}\ls \abs{u_0}_{W^{1,\infty}(\Om)}.
\end{equation}

In the following part, for convenience's sake, we will set
\begin{equation}\label{eq206}
u_1(x, x/\ep)=u_0(x)+\ep\chi^j(x/\ep)\frac{\partial
u_0(x)}{\partial x_j}.
\end{equation}

The following error estimates are known (cf. e.g. \cite{CH2002, moskow1997first, CY2002}).

\begin{theorem}\label{homoerror}
Assume that $u_0\in H^2(\Om)\cap W^{1,\infty}(\Om)$. Then there exists a constant $C$
independent of $\ep$, the domain $\Om$, and the function $f$ such that
\begin{equation}\label{homoestimate}
\|\na(u_\ep-u_1-\ep\theta_\ep)\|_{L^2(\Om)}\leq
C\ep\abs{u_0}_{H^2(\Om)}.
\end{equation}
Moreover, the boundary corrector $\theta_\ep$ satisfies the estimate
\begin{equation}\label{boundarycorestimate}
\|\ep\na\theta_\ep\|_{L^2(\Om)}\leq
C\ep\abs{u_0}_{H^2(\Om)}+C\sqrt{\ep\abs{\pa\Om}}\abs{u_0}_{W^{1,\infty}(\Om)}.
\end{equation}
where $\abs{\pa\Om}$ stands for the measure of the boundary $\pa\Om$.
\end{theorem}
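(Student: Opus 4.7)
The overall plan is the classical two-scale energy argument (Bensoussan--Lions--Papanicolaou style) adapted with the Dirichlet boundary corrector. I would set the residual $z_\ep:=u_\ep-u_1-\ep\theta_\ep$, show it lies in $H_0^1(\Om)$, derive the PDE it satisfies, rewrite the right-hand side in divergence form with an $O(\ep)$-small flux, and close by the uniform coercivity \eqref{eq13}. The second bound \eqref{boundarycorestimate} is independent and follows from an $H^1$-energy estimate for $\theta_\ep$ combined with a strip-supported lift of its Dirichlet data.

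For \eqref{homoestimate}, \eqref{eq206} and \eqref{e16} immediately give $z_\ep|_{\pa\Om}=0$. Differentiating $u_1$ once and using the cell problem \eqref{eq16}, which is equivalent to $\pa_{y_i}b_{ik}=0$ for $b_{ik}(y):=a_{il}(y)(\de_{lk}+\pa_{y_l}\chi^k(y))$ with mean $\langle b_{ik}\rangle_Y=a^*_{ik}$, a direct computation yields
\begin{equation*}
-\na\cdot(\mathbf{a}^\ep\na z_\ep)=(b_{ik}(x/\ep)-a^*_{ik})\pa_i\pa_k u_0+\ep\,\pa_i\bigl[a_{ij}(x/\ep)\chi^k(x/\ep)\pa_j\pa_k u_0\bigr].
\end{equation*}
The second term is already in divergence form with a bounded flux; the first term is the delicate one, since it is only $O(1)$ in $L^\infty$. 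However $b_{ik}-a^*_{ik}$ is $Y$-periodic, has zero mean, and is $y$-divergence-free, so it admits an $L^\infty$ periodic flux corrector $E^{lik}=-E^{ilk}$ with $\pa_{y_l}E^{lik}=b_{ik}-a^*_{ik}$. Using the symmetry of $\pa_l\pa_i\pa_k u_0$ in $(l,i)$ against the antisymmetry of $E^{lik}$ to kill the third-derivative term, one rewrites the first contribution as $\ep\,\pa_{x_l}(E^{lik}(x/\ep)\pa_i\pa_k u_0)$. Hence $-\na\cdot(\mathbf{a}^\ep\na z_\ep)=\ep\,\na\cdot G_\ep$ with $\|G_\ep\|_{L^2(\Om)}\ls|u_0|_{H^2(\Om)}$; testing against $z_\ep\in H_0^1(\Om)$ and using \eqref{eq13} closes the estimate.

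For \eqref{boundarycorestimate}, I pick $\eta\in C^\infty(\overline\Om)$ with $\eta\equiv1$ on $\pa\Om$, $\supp\eta\subset\{x:\dist(x,\pa\Om)<2\ep\}$ and $|\na\eta|\ls\ep^{-1}$, and set $\tilde\theta(x):=-\eta(x)\chi^j(x/\ep)\pa_j u_0(x)$. Then $\tilde\theta$ matches the Dirichlet data of $\theta_\ep$, so $\theta_\ep-\tilde\theta\in H_0^1(\Om)$; the energy inequality for \eqref{e16} with \eqref{eq13} yields $\|\na\theta_\ep\|_{L^2(\Om)}\ls\|\na\tilde\theta\|_{L^2(\Om)}$. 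A routine product-rule expansion, together with $\|\chi^j\|_{L^\infty(\R^n)}+\|\na_y\chi^j\|_{L^\infty(\R^n)}\ls 1$ (from the $C_p^1$ assumption) and $|\supp\eta|\ls\ep|\pa\Om|$, gives
\begin{equation*}
\|\na\tilde\theta\|_{L^2(\Om)}^2\ls\ep^{-1}|\pa\Om|\,|u_0|_{W^{1,\infty}(\Om)}^2+|u_0|_{H^2(\Om)}^2.
\end{equation*}
Multiplying by $\ep^2$ and taking square roots yields \eqref{boundarycorestimate}.

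The main obstacle is the flux-corrector step in part one: without recognizing the zero-mean and zero-$y$-divergence structure of $b_{ik}-a^*_{ik}$ and producing the antisymmetric potential $E^{lik}$, the apparent leading-order residual $(b_{ik}(x/\ep)-a^*_{ik})\pa_i\pa_k u_0$ is only $O(1)$ in $L^2$ and no $\ep$ gain is available; once this algebraic reduction is in hand, the rest is a routine coercivity-and-test argument. For \eqref{boundarycorestimate} the only subtlety is calibrating the width of the cutoff strip to $\sim\ep$, so that $|\na\eta|^2\,|\supp\eta|\sim\ep^{-1}|\pa\Om|$ balances the factor $\ep^2$ produced by $\ep\na\theta_\ep$.
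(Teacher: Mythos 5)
Your argument is correct and is essentially the classical proof that the paper invokes by citation: your identity $-\na\cdot(\mathbf{a}^\ep\na z_\ep)=\ep\,\na\cdot G_\ep$ with the antisymmetric flux corrector $E^{lik}$ is exactly the weak-form identity \eqref{e20} (with $E$ playing the role of the paper's $\alpha_{ij}^k$), and testing it with $z_\ep\in H_0^1(\Om)$ together with your $\ep$-strip lift of the boundary data for $\theta_\ep$ reproduces the standard derivation of \eqref{homoestimate} and \eqref{boundarycorestimate}. The only point worth making explicit is that the bound $\norm{G_\ep}_{L^2(\Om)}\ls\abs{u_0}_{H^2(\Om)}$ needs $E^{lik}\in L^\infty$, which in dimension $n=3$ does not follow from $E\in H^1_{\rm per}$ alone but is guaranteed here by the standing assumption $a_{ij}\in C_p^1(\R^n)$ (so $\chi^j\in C^{1,\al}$ and the potential $E$ is H\"older continuous).
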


The following regularity estimate is an analogy of the classical interior estimate for elliptic equations in \cite[Theorem 8.8, P.183]{GT}.
\begin{lemma}\label{interiorestimate}
Let $w\in H^1(D)$ be the weak solution of the equation
\[
-\nabla\cdot(\mathbf{a}^\ep(x)\nabla w)  = g(x), \quad x\in D,
\]
where $\mathbf{a}^\ep(x)$ satisfies \eqref{eq13} and $g\in L^2(D)$. Then for any subdomain $D'\subset\subset D$, we have $w\in H^2(D')$ and
\begin{equation}\label{h2interiorestimate}
\abs{w}_{H^2(D')}\ls
\left(\frac{1}{d'}+\frac{1}{r}\right)\abs{w}_{H^1(D)}+\norm{g}_{L^2(D)}
\end{equation}
where $d'=\text{dist} (D',\partial D)$ and  $r$ is a constant such that $\abs{\mathbf{a}^\ep(x)-\mathbf{a}^\ep(y)}\le r\abs{x-y}$.
\end{lemma}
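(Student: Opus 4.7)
The proof follows the classical interior regularity argument for uniformly elliptic divergence-form equations via Nirenberg's method of translations, adapted from \cite[Theorem 8.8]{GT}. Fix a cutoff function $\eta\in C_c^\infty(D)$ with $\eta\equiv 1$ on $D'$, $0\le\eta\le 1$, and $\abs{\na\eta}\ls 1/d'$. For a unit direction $e_k$ and a small increment $0<\abs{h}<d'/2$, denote the difference quotient $D_k^h u(x):=\bigl(u(x+he_k)-u(x)\bigr)/h$. Plug the admissible test function $v:=-D_k^{-h}(\eta^2 D_k^h w)\in H_0^1(D)$ into the weak formulation of the equation.

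Using the discrete integration-by-parts identity $\int_D f\,D_k^{-h}\varphi=-\int_D (D_k^h f)\,\varphi$ together with the discrete Leibniz rule
\[
D_k^h(\mathbf{a}^\ep\na w)=(D_k^h\mathbf{a}^\ep)\,\na w(\cdot+he_k)+\mathbf{a}^\ep\,\na(D_k^h w),
\]
and expanding $\na(\eta^2 D_k^h w)=\eta^2\na(D_k^h w)+2\eta\,D_k^h w\,\na\eta$, I rearrange the identity to isolate the quadratic form on the left:
\begin{align*}
\int_D \eta^2\,\mathbf{a}^\ep\na(D_k^h w)\cdot\na(D_k^h w)
={}&-2\int_D \eta\,\mathbf{a}^\ep\na(D_k^h w)\cdot\na\eta\,D_k^h w\\
&-\int_D (D_k^h\mathbf{a}^\ep)\na w(\cdot+he_k)\cdot\na(\eta^2 D_k^h w)\\
&+\int_D g\,D_k^{-h}(\eta^2 D_k^h w).
\end{align*}

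Now I bound each term on the right. The ellipticity \eqref{eq13} controls the left-hand side from below by $\la\,\norml{\eta\na(D_k^h w)}{D}^2$. The first right-hand term is handled by Cauchy--Schwarz and Young's inequality, absorbing one power of $\eta\na(D_k^h w)$ into the left side and leaving a contribution of order $\frac{1}{d'}\abs{w}_{H^1(D)}$. For the second term, the hypothesis $\abs{D_k^h\mathbf{a}^\ep}\le r$ (the Lipschitz bound) is used; after Cauchy--Schwarz on both pieces of $\na(\eta^2 D_k^h w)$ and another Young's application, one absorbs $\eta\na(D_k^h w)$ again, producing a contribution governed by the Lipschitz constant multiplied by $\abs{w}_{H^1(D)}$. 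For the forcing term, the standard bound $\norml{D_k^{-h}\varphi}{}\le\norml{\na\varphi}{}$ for compactly supported $\varphi$ yields $\norm{g}_{L^2(D)}$ times $(\norml{\eta\na(D_k^h w)}{D}+\frac{1}{d'}\abs{w}_{H^1(D)})$, again absorbed via Young.

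Combining and choosing Young parameters small enough to absorb the $\eta\na(D_k^h w)$ terms on the left, I arrive at a uniform-in-$h$ bound for $\norml{\na(D_k^h w)}{D'}$ of the stated form. The standard characterization of $H^1$ via difference quotients (Gilbarg--Trudinger Lemma 7.24) then promotes this to $\na w\in H^1(D')^n$, i.e., $w\in H^2(D')$, with the estimate \eqref{h2interiorestimate}. The delicate step is the second right-hand term: it is precisely the derivative of the coefficient that forces a Lipschitz bound to appear, and all other contributions are routine once the cutoff is handled. Letting $h\to 0$ preserves the bound, completing the proof.
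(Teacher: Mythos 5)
Your proof is correct and takes essentially the same route as the paper's: both run the Nirenberg difference-quotient argument of Gilbarg--Trudinger, Theorem 8.8, with the test function $-\Delta^{-h}(\eta^2\Delta^h w)$, a cutoff whose gradient is of order $1/d'$, Young's inequality to absorb the $\eta\nabla(\Delta^h w)$ terms, and Lemma 7.24 of Gilbarg--Trudinger to pass from the uniform-in-$h$ difference-quotient bound to $w\in H^2(D')$. One caveat: with the hypothesis read literally ($\abs{\mathbf{a}^\ep(x)-\mathbf{a}^\ep(y)}\le r\abs{x-y}$, hence $\abs{\Delta^h\mathbf{a}^\ep}\le r$), your coefficient term yields $r\abs{w}_{H^1(D)}$ rather than the $\tfrac1r\abs{w}_{H^1(D)}$ appearing in \eqref{h2interiorestimate}; this is a notational inconsistency in the paper itself (its displayed intermediate bound and its later applications with $r=\ep$ only cohere if $1/r$ denotes the Lipschitz constant), so your derivation is the mathematically correct version of the statement.
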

\begin{remark}
The $H^2$ norm interior estimate for elliptic equations is well-known in the literature. The importance in above estimate is the explicit dependence of the bound on $d'$ and $r$ which is crucial in our analysis.
\end{remark}
\begin{proof}
First, we define the difference quotient as follow:
\[
\Delta^hv(x)=\Delta_k^{h}v=\frac{v(x+he_k)-v(x)}{h},
\]
where $e_k, 1\le k\le n$ is the unit coordinate vector in the $x_k$ direction. And then, following the proof presented in \cite[Theorem 8.8, P.183]{GT}, we obtain
\[
\begin{split}
&\lambda\int_D\abs{\xi\na (\Delta^h w) }^2{\rm d} x\\
&\qquad\le \left(\frac{1}{r}\norm{\na w}_{L^2(D)}+\norm{g}_{L^2(D)}\right)\left(\norm{\xi\na(\Delta^h w)}_{L^2(D)}+2\norm{\Delta^h w\na\xi}_{L^2(D)}\right)\\
&\qquad\qquad+C \norm{\xi\na(\Delta^h w)}_{L^2(D)}\norm{\Delta^h w\na\xi}_{L^2(D)},
\end{split}
\]
where $\xi\in C_0^1(D)$  is the cut-off function such that
$0\leq\xi\leq 1$, $\xi=1$ in $D'$, and $|\na\xi|\leq 2/d'$ in $D$.
By use of the Young's inequality and the Lemma 7.23  in \cite[P.168]{GT}, it follows that
\[
\begin{split}
\norm{\xi\Delta^h(\na w)}_{L^2(D)}&\le C \left(\frac{1}{r}\norm{\na w}_{L^2(D)}+\norm{g}_{L^2(D)}+\norm{\Delta^h w\na\xi}_{L^2(D)}\right)\\
&\leq C \left(\frac{1}{r}+\frac{1}{d'}\right)\norm{\na w}_{L^2(D)}+C\norm{g}_{L^2(D)}.
\end{split}
\]
By the Lemma 7.24  in \cite[P.169]{GT} we obtain $\na w \in H^1(D')$, so that $w\in H^2(D')$ and the estimate \eqref{h2interiorestimate} holds. This completes the proof.
\end{proof}

Utilizing the above $H^2$ interior estimate to equation $-\nabla\cdot(\mathbf{a}^*\nabla u_{0{x_j}})  =f_{x_j}, j=1,\cdots,n$, where $x_j$ is the coordinate variable in the $j$th direction, we obtain

\begin{lemma}\label{h3u0} Let $u_0$ be the solution to \eqref{e18}.
Assume that $u_0\in H^2(D)$ and $f\in H^1(D)$. Then for any subdomain $D'\subset\subset D$ with $d'=\text{dist} (D',\partial D)$, we have
\begin{equation}\label{h3estimate}
\abs{u_0}_{H^3(D')}\ls
\frac{1}{d'}\abs{u_0}_{H^2(D)}+\norm{\na f}_{L^2(D)}.
\end{equation}
\end{lemma}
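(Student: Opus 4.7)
The plan is to reduce Lemma~\ref{h3u0} to an application of the interior $H^2$ estimate (Lemma~\ref{interiorestimate}) by differentiating the homogenized equation once. Since $\mathbf{a}^*$ is a \emph{constant} matrix (the homogenized tensor defined in \eqref{eq15}), partial differentiation of the PDE $-\nabla\cdot(\mathbf{a}^*\nabla u_0)=f$ in $D$ with respect to $x_j$, understood distributionally, yields
\[
-\nabla\cdot(\mathbf{a}^*\nabla u_{0,x_j})=f_{x_j}\qquad\text{in }D,
\]
for each $j=1,\dots,n$. The hypothesis $u_0\in H^2(D)$ guarantees $u_{0,x_j}\in H^1(D)$, and $f\in H^1(D)$ guarantees $f_{x_j}\in L^2(D)$, so this derived equation lies exactly within the scope of Lemma~\ref{interiorestimate}.

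Next I would apply Lemma~\ref{interiorestimate} with $w=u_{0,x_j}$, $g=f_{x_j}$, and $\mathbf{a}^\ep$ replaced by $\mathbf{a}^*$. Since $\mathbf{a}^*$ is constant, one may take the Lipschitz constant $r$ arbitrarily large so that $1/r=0$; only the $1/d'$ contribution survives. This gives, for the subdomain $D'\subset\subset D$ with $d'=\mathrm{dist}(D',\partial D)$,
\[
\abs{u_{0,x_j}}_{H^2(D')}\ls \frac{1}{d'}\abs{u_{0,x_j}}_{H^1(D)}+\norm{f_{x_j}}_{L^2(D)}\ls \frac{1}{d'}\abs{u_0}_{H^2(D)}+\norm{\na f}_{L^2(D)}.
\]
Summing (or, equivalently, squaring and summing) over $j=1,\dots,n$ and noting that $|u_0|_{H^3(D')}^2=\sum_j|u_{0,x_j}|_{H^2(D')}^2$ yields the desired bound \eqref{h3estimate}.

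There is essentially no technical obstacle here; the argument is a two-line reduction once one observes the key point that the homogenized tensor $\mathbf{a}^*$ is constant in space, so differentiating the equation is legal and the $1/r$ term in Lemma~\ref{interiorestimate} drops out. The only thing to be slightly careful about is justifying the distributional differentiation under the stated regularity, which is immediate from $u_0\in H^2(D)$ and $f\in H^1(D)$. One could alternatively present the argument via difference quotients $\Delta^h u_0$ to avoid any mention of distributional derivatives, but this would just be a rewriting of the same idea.
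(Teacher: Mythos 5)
Your proof is correct and follows exactly the paper's route: the paper justifies Lemma~\ref{h3u0} in one sentence by applying the interior estimate of Lemma~\ref{interiorestimate} to the differentiated homogenized equation $-\nabla\cdot(\mathbf{a}^*\nabla u_{0,x_j})=f_{x_j}$, which is precisely your reduction. Your additional remark that the constancy of $\mathbf{a}^*$ eliminates the $1/r$ term correctly accounts for why \eqref{h3estimate} contains only the $1/d'$ contribution.
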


Further, by use of the $H^2$ interior estimate \eqref{h2interiorestimate}, we obtain an $H^2$ semi-norm interior estimate of the error $u_\ep-u_1$ in the narrow domain $\Om_\Ga$.
\begin{theorem}\label{h2homoerror}
Assume that $u_0\in H^2(\Om) \cap W^{1,\infty}(\Om)$, and $f|_{\Om_\Ga}\in H^1(\Om_\Ga)$. Then for any subdomain $\Om'\subset\subset \Om_\Ga$ with $\text{dist} (\Om',\partial \Om_\Ga)\ge 2\ep$, we have
\begin{equation}\label{h2homoestimate}
\abs{u_\ep-u_1}_{H^2(\Om')}\ls \abs{u_0}_{H^2(\Om)}+\ep\norm{\na f}_{L^2(\Om_\Ga)}+\frac{1}{\sqrt{\ep}}\abs{u_0}_{W^{1,\infty}(\Om)}.
\end{equation}
\end{theorem}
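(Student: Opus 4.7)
The plan is to set $w:=u_\ep-u_1$, derive the PDE it satisfies, and then feed that PDE into the quantitative interior $H^2$ estimate of Lemma~\ref{interiorestimate}. First, a direct computation using the corrector formula \eqref{eq206} and the cell-problem identity $\partial_{y_k}(a_{ki}(y)(\delta_{ij}+\partial_{y_i}\chi^j(y)))=0$ shows that
\begin{equation*}
-\nabla\cdot(\mathbf{a}^\ep\nabla w)=g_\ep\quad\text{in }\Om,
\end{equation*}
where $g_\ep$ consists of bounded-coefficient terms multiplying $\nabla^2 u_0$ (these come from $(b^\ep_{kj}-a_{kj}^*)\partial_k\partial_j u_0$ together with the $\frac{1}{\ep}$ derivatives of $\mathbf{a}^\ep$ and $\chi^j(\cdot/\ep)$, each of which cancels against the explicit $\ep$ in front of the corrector) plus one genuinely third-derivative term $\ep\, a_{ki}^\ep \chi^j\,\partial_k\partial_i\partial_j u_0$. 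Consequently
\begin{equation*}
\|g_\ep\|_{L^2(\Om_\Ga)}\lesssim |u_0|_{H^2(\Om)}+\ep\,|u_0|_{H^3(\Om_\Ga)}.
\end{equation*}

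Next, pick an intermediate set $\Om''$ with $\Om'\subset\subset\Om''\subset\subset\Om_\Ga$ so that $\text{dist}(\Om',\pa\Om'')\ge\ep$ and $\text{dist}(\Om'',\pa\Om_\Ga)\ge\ep$; this is possible because $\text{dist}(\Om',\pa\Om_\Ga)\ge2\ep$ by hypothesis. Apply Lemma~\ref{interiorestimate} to $w$ on the pair $(\Om'',\Om')$: since $\mathbf{a}^\ep(x)=\mathbf{a}(x/\ep)$ has Lipschitz constant $r^{-1}\lesssim 1/\ep$, and $1/d'\le1/\ep$, this yields
\begin{equation*}
|w|_{H^2(\Om')}\lesssim \tfrac{1}{\ep}\,|w|_{H^1(\Om'')}+\|g_\ep\|_{L^2(\Om'')}.
\end{equation*}

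For $|w|_{H^1(\Om'')}$, add and subtract $\ep\theta_\ep$ and use Theorem~\ref{homoerror}:
\begin{equation*}
\|\nabla(u_\ep-u_1)\|_{L^2(\Om)}\le \|\nabla(u_\ep-u_1-\ep\theta_\ep)\|_{L^2(\Om)}+\|\ep\nabla\theta_\ep\|_{L^2(\Om)}\lesssim \ep\,|u_0|_{H^2(\Om)}+\sqrt{\ep|\pa\Om|}\,|u_0|_{W^{1,\infty}(\Om)}.
\end{equation*}
Dividing by $\ep$ converts the second contribution into the desired $\ep^{-1/2}|u_0|_{W^{1,\infty}(\Om)}$ term. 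For $\|g_\ep\|_{L^2(\Om'')}$, the only nontrivial piece is $\ep|u_0|_{H^3(\Om'')}$, which we control via Lemma~\ref{h3u0} applied on the pair $(\Om_\Ga,\Om'')$ (here we use $u_0\in H^2(\Om)$ and $f|_{\Om_\Ga}\in H^1(\Om_\Ga)$, with $d'\ge\ep$), giving $\ep|u_0|_{H^3(\Om'')}\lesssim |u_0|_{H^2(\Om)}+\ep\|\nabla f\|_{L^2(\Om_\Ga)}$. Assembling all three pieces produces the claimed bound.

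The main obstacle is the correct derivation and bookkeeping in the first step: one must carefully group terms in $-\nabla\cdot(\mathbf{a}^\ep\nabla u_1)$ so that, after invoking the cell-problem identity and the homogenized equation $-a_{kj}^*\partial_k\partial_j u_0=f$, the $O(\ep^{-1})$ contributions from differentiating $\mathbf{a}(x/\ep)$ and $\chi^j(x/\ep)$ precisely cancel the explicit $\ep$ in $u_1$; otherwise $g_\ep$ would be $O(\ep^{-1})$ and the scheme collapses. A secondary subtlety is the choice of the intermediate domain $\Om''$, which is needed to afford both Lemma~\ref{interiorestimate} (an $\ep$-margin for the coefficient $1/d'+1/r\lesssim1/\ep$) and Lemma~\ref{h3u0} (an $\ep$-margin so that the $H^3$ interior estimate of $u_0$ uses only $\|\nabla f\|_{L^2(\Om_\Ga)}$ on the right-hand side, matching the norm that appears in the statement).
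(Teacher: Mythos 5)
Your proposal is correct and follows essentially the same route as the paper: identify the PDE satisfied by $w=u_\ep-u_1$ (the paper invokes the known identity \eqref{e20} with the auxiliary periodic functions $\alpha_{ij}^k$ rather than recomputing via the cell-problem identity, but the resulting right-hand side has the same structure $O(1)\,|\nabla^2u_0|+\ep\,O(1)\,|\nabla^3u_0|$), introduce an intermediate domain with $\ep$-margins, apply Lemma~\ref{interiorestimate} with $1/d'+1/r\ls 1/\ep$, and close using Theorem~\ref{homoerror} for the $H^1$ term and Lemma~\ref{h3u0} for the $H^3$ term. No substantive differences.
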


\begin{proof}
It is shown that, for any $\varphi\in H_0^1(\Om)$ (see \cite[p.550]{CH2002} or \cite[p.125]{CW2010}),
\begin{equation}\label{e20}
\begin{split}
 &\left(\mathbf{a}(x/\ep)\nabla(u_\ep-u_1),\nabla
 \varphi\right)_{\Om}\\
 &=(\mathbf{a}^*\nabla u_0, \nabla
 \varphi)_{\Om}-\left(\mathbf{a}(x/\ep)\nabla\left(u_0+\ep\chi^k\frac{\partial
 u_0}{\partial x_k}\right),\nabla\varphi \right)_{\Om}\\
 &=\ep\int_{\Om} a_{ij}(x/\ep)\chi^k
 \frac{\partial^2 u_0}{\partial x_j\partial
 x_k}\frac{\partial\varphi}{\partial x_i} -\ep\int_{\Om}
 \alpha_{ij}^k(x/\ep) \frac{\partial^2 u_0}{\partial
 x_j\partial x_k}\frac{\partial\varphi}{\partial x_i} ,
 \end{split}
\end{equation}
where
$\alpha_{ij}^k(y)\in H^1_{loc}(\R^n)$ are $Y$-periodic and dependent only on the coefficients $\mathbf{a}(y)$  (see \cite[p.6]{JKO}).
According to the assumption, there exists a subdomain $\Om_c\subset\subset\Om_\Ga$ such that $\Om'\subset\subset\Om_c$ with $d_0=\text{dist} (\Om',\partial \Om_c)\ge \ep$ and $d_1=\text{dist} (\Om_c,\partial \Om_\Ga)\ge \ep$.
From \eqref{e20}, it follows that in $\Om_c$,
\[
\na\cdot\left(\mathbf{a}(x/\ep)\nabla(u_\ep-u_1)\right)=\ep\frac{\pa}{\pa x_i}\left(a_{ij}(x/\ep)\chi^k
 \frac{\partial^2 u_0}{\partial x_j\partial
 x_k}-\alpha_{ij}^k(x/\ep) \frac{\partial^2 u_0}{\partial
 x_j\partial x_k}\right).
\]
Thus, from Lemma~\ref{interiorestimate}, it follows that
 \begin{equation*}
       \abs{u_\ep-u_1}_{H^2(\Om')}\ls  \Big(\frac{1}{d_0}+\frac{1}{\ep}\Big)\abs{u_\ep-u_1}_{H^1(\Om_c)}+ \abs{u_0}_{H^2(\Om_c)}+\ep\abs{u_0}_{H^3(\Om_c)}.
 \end{equation*}
Hence, from Theorem~\ref{homoerror} and Lemma~\ref{h3u0}, it follows the result \eqref{h2homoestimate} immediately.
\end{proof}


We conclude this section with a local $H^2$ semi-norm estimate for $u_1$ in $\Om_{\Ga_h}$, which will be used in the convergence analysis.

\begin{lemma}\label{uelocalh2}
Assume that $u_0\in H^2(\Om) \cap W^{1,\infty}(\Om)$, and $f|_{\Om_\Ga}\in H^1(\Om_\Ga)$. Then,
\begin{equation}\label{localh2esta}
    \abs{u_1}_{H^2{(\Om_{\Ga_h})}}\ls \abs{u_0}_{H^2(\Om_\Ga)}+ \ep^{-1} h^{1/2}\abs{u_0}_{W^{1,\infty}(\Om_{\Ga_h})}+\ep\|\na f\|_{L^2(\Om_\Ga)},
\end{equation}
where $\Om_{\Ga_h}$ and $\Om_\Ga$ are defined in \eqref{eogHh} and \eqref{eog}, respectively.
\end{lemma}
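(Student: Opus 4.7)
The plan is to write $u_1(x)=u_0(x)+\ep\,\chi^j(x/\ep)\,\pa_{x_j}u_0(x)$, differentiate twice with the product/chain rule, and bound each resulting term in $L^2(\Om_{\Ga_h})$ separately. A direct calculation gives, schematically,
\[
D^2 u_1 = \underbrace{D^2 u_0}_{(\mathrm{I})} + \underbrace{\ep^{-1}(D_y^2\chi^j)(x/\ep)\,\pa_j u_0}_{(\mathrm{II})} + \underbrace{(D_y\chi^j)(x/\ep)\,D^2 u_0}_{(\mathrm{III})} + \underbrace{\ep\,\chi^j(x/\ep)\,D^3 u_0}_{(\mathrm{IV})}.
\]
Because $a_{ij}\in C^1_p(\R^n)$, elliptic regularity for the cell problem \eqref{eq16} on the torus yields $\chi^j\in C^{1,\alpha}(\R^n)\cap H^2_{\loc}(\R^n)$, so $\chi^j$ and $D_y\chi^j$ are bounded and $|D_y^2\chi^j|^2$ is a $Y$-periodic $L^1$ function.

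For (I) and (III), the $L^\infty$ bound on $D_y\chi^j$ immediately yields a contribution $\ls\abs{u_0}_{H^2(\Om_{\Ga_h})}\le\abs{u_0}_{H^2(\Om_\Ga)}$. For (IV), I invoke Lemma~\ref{h3u0} with $D=\Om_\Ga$ and $D'=\Om_{\Ga_h}$; by the construction of $\Om_\Ga$ in \eqref{eog} and $\Om_{\Ga_h}$ in \eqref{eogHh}, we have $\text{dist}(\Om_{\Ga_h},\pa\Om_\Ga)\ge 2\ep$, so
\[
\ep\,\abs{u_0}_{H^3(\Om_{\Ga_h})}\ls \abs{u_0}_{H^2(\Om_\Ga)}+\ep\,\|\na f\|_{L^2(\Om_\Ga)},
\]
matching two of the terms on the right-hand side of \eqref{localh2esta}.

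The main obstacle, and the only term where something subtle happens, is (II): the factor $\ep^{-1}$ cannot be absorbed by anything bounded, so I have to exploit both the smallness of the strip $\Om_{\Ga_h}$ and the periodicity of $\chi^j$. The idea is to pull $\na u_0$ out in $L^\infty(\Om_{\Ga_h})$, so that (II) contributes
\[
\ep^{-1}\abs{u_0}_{W^{1,\infty}(\Om_{\Ga_h})}\;\bigl\|(D_y^2\chi^j)(\cdot/\ep)\bigr\|_{L^2(\Om_{\Ga_h})}.
\]
A standard change of variables/periodicity argument — cover $\Om_{\Ga_h}$ by $\ep Y$-cells and use the $Y$-periodicity and $L^1$-integrability of $|D_y^2\chi^j|^2$ — gives $\|(D_y^2\chi^j)(\cdot/\ep)\|_{L^2(\Om_{\Ga_h})}^2\ls |\Om_{\Ga_h}|$. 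Since $\Om_{\Ga_h}$ is a strip of thickness $O(h)$ around $\Ga$ with $|\Ga|=O(1)$ (and the triangulation $\M_h$ is shape-regular and quasi-uniform), we get $|\Om_{\Ga_h}|\ls h$, hence $\|(D_y^2\chi^j)(\cdot/\ep)\|_{L^2(\Om_{\Ga_h})}\ls h^{1/2}$, producing exactly the $\ep^{-1}h^{1/2}\abs{u_0}_{W^{1,\infty}(\Om_{\Ga_h})}$ contribution in \eqref{localh2esta}.

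Collecting the four estimates gives \eqref{localh2esta}. The only delicate ingredients are the periodicity/scaling argument needed to trade the $\ep^{-1}$ against the $O(h)$ volume of the strip, and the use of Lemma~\ref{h3u0} with distance $2\ep$, which is precisely the distance guaranteed by the definition of $\Om_\Ga$ in \eqref{eog}; the rest is a direct product-rule calculation.
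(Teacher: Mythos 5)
Your proposal follows the paper's route almost exactly: expand $D^2u_1$ by the product rule, absorb the terms with bounded coefficients into $\abs{u_0}_{H^2(\Om_\Ga)}$, control $\ep\abs{u_0}_{H^3(\Om_{\Ga_h})}$ via Lemma~\ref{h3u0} using $\dist(\Om_{\Ga_h},\pa\Om_\Ga)\ge2\ep$, and trade the $\ep^{-1}$ in the remaining term against $\abs{\Om_{\Ga_h}}^{1/2}=O(h^{1/2})$ after putting $\na u_0$ in $L^\infty$. The only place you deviate is your treatment of term (II), and that is where there is a genuine gap. You deliberately assume only $\chi^j\in H^2_{\loc}$, so that $\abs{D_y^2\chi^j}^2$ is merely a $Y$-periodic $L^1$ density, and then claim that covering $\Om_{\Ga_h}$ by $\ep Y$-cells gives $\|(D_y^2\chi^j)(\cdot/\ep)\|_{L^2(\Om_{\Ga_h})}^2\ls\abs{\Om_{\Ga_h}}$. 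This does not work: since $h<\ep$, the strip $\Om_{\Ga_h}$ is \emph{thinner than one period cell}, so the union of the $\ep$-cells meeting it has measure $\sim\ep\abs{\Ga}$, and the covering argument only yields $\|(D_y^2\chi^j)(\cdot/\ep)\|_{L^2(\Om_{\Ga_h})}^2\ls\ep$, i.e. a contribution $\ep^{-1/2}\abs{u_0}_{W^{1,\infty}}$ rather than the claimed $\ep^{-1}h^{1/2}\abs{u_0}_{W^{1,\infty}}$. Worse, for a general periodic $g\ge0$ with only $g\in L^1(Y)$ the bound $\int_{\Om_{\Ga_h}}g(x/\ep)\dx\ls\abs{\Om_{\Ga_h}}$ is simply false: $g$ may concentrate near a lower-dimensional set which the sub-period-width strip can track.

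What actually closes the estimate --- and what the paper tacitly uses in writing $\abs{\ep\chi^j\pa_ju_0}_{H^2(\Om_{\Ga_h})}\ls\abs{u_0}_{H^2(\Om_{\Ga_h})}+\ep^{-1}\abs{\pa_ju_0}_{L^2(\Om_{\Ga_h})}+\ep\abs{u_0}_{H^3(\Om_{\Ga_h})}$ --- is the pointwise bound $D_y^2\chi^j\in L^\infty$, after which
\begin{equation*}
\ep^{-1}\|(D_y^2\chi^j)(\cdot/\ep)\,\pa_ju_0\|_{L^2(\Om_{\Ga_h})}\ls\ep^{-1}\abs{\Om_{\Ga_h}}^{1/2}\norm{\na u_0}_{L^\infty(\Om_{\Ga_h})}\ls\ep^{-1}h^{1/2}\abs{u_0}_{W^{1,\infty}(\Om_{\Ga_h})}
\end{equation*}
with no periodicity argument at all. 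So you should either upgrade the regularity of the corrector (e.g.\ $\chi^j\in W^{2,\infty}$, which follows from Schauder theory under a mildly stronger hypothesis such as $a_{ij}\in C^{1,\alpha}_p$, and which the paper's own proof implicitly assumes), or replace the covering step by an argument that genuinely handles integration of a periodic $L^1$ density over a region thinner than one period; as written, the step does not deliver the $h^{1/2}$ factor. Everything else in your proposal matches the paper's proof.
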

\begin{proof}
It is easy to see that
\[
\begin{split}
\abs{u_1}_{H^2{(\Om_{\Ga_h})}}&\le \abs{u_0}_{H^2{(\Om_{\Ga_h})}}+\abs{\ep\chi^j\frac{\pa u_0}{\pa x_j}}_{H^2{(\Om_{\Ga_h})}}\\
&\ls \abs{u_0}_{H^2{(\Om_{\Ga_h})}}+\ep^{-1}\abs{\frac{\pa u_0}{\pa x_j}}_{L^2{(\Om_{\Ga_h})}}+\ep\abs{u_0}_{H^3{(\Om_{\Ga_h})}}.
\end{split}
\]
Thus, from  Lemma~\ref{h3u0}, it follows that
\[
\abs{u_1}_{H^2{(\Om_{\Ga_h})}}\ls \abs{u_0}_{H^2(\Om_\Ga)}+\ep\|\na f\|_{L^2(\Om_\Ga)} +\ep^{-1}\abs{\Om_{\Ga_h}}^{1/2}\norm{\na u_0}_{L^\infty{(\Om_{\Ga_h})}},
\]
which, combining with the fact that $\abs{\Om_{\Ga_h}}=O(h)$, yields the result \eqref{localh2esta}.
\end{proof}
\section{Approximation properties of the FE-MsFE space $V_{h,H}$}\label{sec-4}
In this section, we give some approximation properties for the oversampling MsFE space $X_H$ and the linear FE space $W_h$, respectively.
\subsection{Approximation properties of oversampling MsFE space $X_H$}
We first recall a stability estimate for $\Pi_H$ (see \cite[Lemma 9.8]{CW2010} or \cite[Appendix B]{EHW}).
\begin{lemma}\label{L:8.8}
 There exist constants $\gamma$ and $C$ independent of $H$ and $\eps$ such that
 if $H_K\leq \gamma$ and $\eps/H_K\leq\gamma $ for all $K\in\M_H$, the following estimates are valid
 \begin{equation*}
   C^{-1}\norm{\na \Pi_H w_H}_{L^2(K)}\leq \norm{\na w_H}_{L^2(K)}
   \leq C\norm{\na \Pi_H w_H}_{L^2(K)}\ \ \ \ \forall\, w_H\in
   X_H.
 \end{equation*}
\end{lemma}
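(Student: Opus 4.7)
The plan is to reduce to a single macro-element, since both asserted inequalities are local in $K$. I fix $K\in\M_H$ and $S=S(K)$, and take $w_H\in X_H$. By \eqref{coefcij1}--\eqref{coefcij2} there exist scalars $d_1,\ldots,d_{n+1}$ so that $\eta := d_j\psi_j^S$ and $v_0 := d_j\varphi_j^S$ on $S$ satisfy $w_H|_K = \eta|_K$ and $\Pi_K w_H = v_0|_K$. The decisive observation is that $v_0$ is \emph{affine} on $S$: its gradient $\mathbf{c}:=\na v_0$ is a constant vector, $|v_0|_{H^2(S)}=0$, and $\|\na v_0\|_{L^2(K)}^2 \eqsim H_K^n|\mathbf{c}|^2$.

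Next I would apply the homogenization decomposition (Theorem~\ref{homoerror}) to $\eta$ on $S$. Since $\eta$ solves $-\na\cdot(\mathbf{a}^\ep\na\eta)=0$ in $S$ with the affine boundary data $v_0|_{\pa S}$, its homogenized solution on $S$ is precisely $v_0$. Setting $v_1 := v_0 + \ep\chi^k(x/\ep)\pa_k v_0$ and letting $\theta_\ep^S$ denote the corresponding boundary corrector, the vanishing $|v_0|_{H^2(S)}=0$ degenerates \eqref{homoestimate} to the \emph{exact} identity
$$\na\eta \;=\; \na v_1 + \ep\na\theta_\ep^S \qquad\text{a.e.\ on } S,$$
while \eqref{boundarycorestimate} simplifies to $\|\ep\na\theta_\ep^S\|_{L^2(S)}\ls \sqrt{\ep|\pa S|}\,|\mathbf{c}|\ls \sqrt{\ep H_K^{n-1}}\,|\mathbf{c}|$.

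The quantitative heart of the argument is then a two-sided bound $\|\na v_1\|_{L^2(K)}\eqsim H_K^{n/2}|\mathbf{c}|$ valid whenever $\ep/H_K$ is small. The upper bound follows from $\chi^k\in C_p^1$. For the lower bound, the identity $\int_Y\na_y\chi^k\,dy=0$ (periodicity) gives that the $Y$-averaged quadratic form $\mathbf{c}\mapsto\tfrac{1}{|Y|}\int_Y|(I+\na_y\chi(y))\mathbf{c}|^2\,dy$ has average $|\mathbf{c}|^2$ and is therefore positive definite; a Riemann-sum argument on the $\ep$-periodic cells covering $K$ then yields $\|\na v_1\|_{L^2(K)}^2\gtrsim H_K^n|\mathbf{c}|^2$ once the $O(\ep)$-thick boundary layer of incomplete cells is shown to be of lower order, which is exactly what the hypothesis $\ep/H_K\le \ga$ ensures.

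Combining the identity from the second step with the boundary-corrector estimate and the bulk bound on $\|\na v_1\|_{L^2(K)}$, both inequalities follow by the triangle inequality: the upper bound is immediate because $\sqrt{\ep/H_K}\le\sqrt\ga$ is small; the lower bound comes from $\|\na v_1\|_{L^2(K)}\le\|\na\eta\|_{L^2(K)}+\|\ep\na\theta_\ep^S\|_{L^2(S)}$ by absorbing the corrector term, which is legitimate once $\gamma$ is fixed sufficiently small. I expect the main obstacle to be the Riemann-sum step: one must show explicitly that the layer of incomplete $\ep$-cells along $\pa K$ contributes at most an $O(\sqrt{\ep/H_K})$ relative error, with constants independent of both $H$ and $\ep$; the rest of the argument is essentially bookkeeping on the homogenization expansion.
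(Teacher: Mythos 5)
Your argument is correct, and it is essentially the standard proof that the paper relies on by citation (\cite[Appendix B]{EHW}, \cite[Lemma 9.8]{CW2010}): expand the oversampled function via the first-order corrector with the affine data $\Pi_K w_H$, note the $O(\ep|u_0|_{H^2})$ remainder vanishes for affine $u_0$, bound the boundary corrector by $\sqrt{\ep H_K^{n-1}}\,|\na\Pi_K w_H|$, and absorb it for $\ep/H_K$ small. Your worry about the incomplete-cell layer is also milder than you fear: since the cross term $2\mathbf{c}^T\na_y\chi\,\mathbf{c}$ integrates to zero over each complete $\ep$-cell, you may simply discard the incomplete cells (their contribution is nonnegative) and only need the union of complete cells in $K$ to have measure $\gtrsim H_K^n$, which holds once $\ep/H_K\le\ga$.
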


Next, we give some approximation properties of the oversampling MsFE space $\text{OMS}(K)$.
\begin{lemma}\label{lmultiapp} For any $K\in\M_H$, there exists $\phi_H\in \text{OMS}(K)$ such that the following estimates hold:
\begin{align}
\abs{u_1-\phi_H}_{H^1(K)}&\ls H_K\abs{u_0}_{H^2(K)}
   +{\eps}H_K^{n/2-1}\abs{u_0}_{W^{1,\infty}(K)},\label{lmultiappl1}\\
   \norm{u_1-\phi_H}_{L^2(K)}&\ls H_K^2\abs{u_0}_{H^2(K)}
   +{\eps}{H_K^{n/2}}\abs{u_0}_{W^{1,\infty}(K)},\label{lmultiappl2}\\
    \abs{u_1-\phi_H}_{H^2(K)}&\ls \ep^{-1}H_K\abs{u_0}_{H^2(K)}+H_K^{n/2-1}\abs{u_0}_{W^{1,\infty}(K)}
   +\ep\abs{u_0}_{H^3(K)}.\label{lmultiappl3}
\end{align}
 \end{lemma}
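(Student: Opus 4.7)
The plan is to construct $\phi_H$ as the restriction to $K$ of an oversampling multiscale extension of a carefully chosen linear function on the macroelement $S$, and then to exploit the fact that whenever the Dirichlet data is linear the oversampled solution equals the first-order corrector plus the boundary corrector \emph{exactly}, so that the only remaining error comes from how well that linear function approximates $u_0$ on $K$.

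First I would use a Bramble--Hilbert argument on $K$ to produce a linear polynomial $v_{0,S}$ on $S$ satisfying $\int_K(u_0-v_{0,S})\dx=0$ and $\int_K\na(u_0-v_{0,S})\dx=0$; this gives $|u_0-v_{0,S}|_{H^m(K)}\ls H_K^{2-m}|u_0|_{H^2(K)}$ for $m=0,1$, $|u_0-v_{0,S}|_{H^2(K)}=|u_0|_{H^2(K)}$, and the bound $|v_{0,S}|_{W^{1,\infty}(S)}\le|u_0|_{W^{1,\infty}(K)}$ since $\na v_{0,S}=|K|^{-1}\int_K\na u_0\dx$ is constant. I would then set $\phi_H:=\bigl(\sum_j v_{0,S}(x_j^S)\psi_j^S\bigr)|_K$, which lies in $\text{OMS}(K)$ since the latter equals the span of $\{\psi_j^S|_K\}$. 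Writing $v_S:=\sum_j v_{0,S}(x_j^S)\psi_j^S$ on $S$ and $v_{1,S}:=v_{0,S}+\ep\chi^j(x/\ep)\partial_j v_{0,S}$, a short cell-problem computation based on \eqref{eq16} together with $\partial^2 v_{0,S}\equiv 0$ shows $-\na\cdot(\mathbf{a}^\ep\na v_{1,S})=0$ in $S$; comparing boundary values against the boundary corrector $\theta_\ep^S$ (solving \eqref{e16} on $S$ with data $-\chi^j(x/\ep)\partial_j v_{0,S}$), uniqueness gives the exact identity $v_S=v_{1,S}+\ep\theta_\ep^S$ on $S$. Restricting to $K$ I obtain the clean decomposition $u_1-\phi_H=(u_0-v_{0,S})+\ep\chi^j(x/\ep)\partial_j(u_0-v_{0,S})-\ep\theta_\ep^S$.

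Bounding the three pieces in the three norms is then essentially mechanical. The first piece is controlled by the Bramble--Hilbert estimates above. The second, $\ep\chi^j(x/\ep)\partial_j(u_0-v_{0,S})$, is handled by the product rule using the standard periodic-function bounds $\|\chi^j\|_{L^\infty},\|\na_y\chi^j\|_{L^\infty}\ls 1$ (together with $\|\na_y^2\chi^j\|_{L^\infty}\ls 1$ for the $H^2$ case), which generates contributions of order $\ep H_K|u_0|_{H^2(K)}$ in $L^2$, $(H_K+\ep)|u_0|_{H^2(K)}$ in $H^1$, and $\ep^{-1}H_K|u_0|_{H^2(K)}+|u_0|_{H^2(K)}+\ep|u_0|_{H^3(K)}$ in $H^2$, each of which fits inside the claimed bounds under the hypothesis $\ep<H_K$. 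For the boundary-corrector piece, the maximum principle applied as in \eqref{maximumtheta} yields $\|\theta_\ep^S\|_{L^\infty(S)}\ls|v_{0,S}|_{W^{1,\infty}(S)}\le|u_0|_{W^{1,\infty}(K)}$, from which $\|\ep\theta_\ep^S\|_{L^2(K)}\ls\ep H_K^{n/2}|u_0|_{W^{1,\infty}(K)}$ follows directly. For the $H^1$ bound I would apply the Caccioppoli inequality to the homogeneous equation $-\na\cdot(\mathbf{a}^\ep\na\theta_\ep^S)=0$, exploiting the gap $\dist(K,\pa S)\ge C_0 H_K$, to obtain $\|\na\theta_\ep^S\|_{L^2(K)}\ls H_K^{-1}\|\theta_\ep^S\|_{L^2(S)}\ls H_K^{n/2-1}|u_0|_{W^{1,\infty}(K)}$. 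For the $H^2$ bound I would apply Lemma~\ref{interiorestimate} on a nested intermediate subdomain $K\subset\subset K'\subset\subset S$ with both gaps $\gtrsim H_K$, noting that the Lipschitz constant of $\mathbf{a}^\ep$ forces $r\eqsim\ep$, and then combine with Caccioppoli on $K'$ to conclude $|\ep\theta_\ep^S|_{H^2(K)}\ls H_K^{n/2-1}|u_0|_{W^{1,\infty}(K)}$.

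The main obstacle is precisely the sharp control of $\ep\theta_\ep^S$ on $K$: a direct application of the global estimate \eqref{boundarycorestimate} on the macroelement $S$ only delivers a factor $\sqrt{\ep H_K^{n-1}}$, which is strictly worse than the target $\ep H_K^{n/2-1}$ whenever $\ep<H_K$. Recovering the sharper rate genuinely requires exploiting that $K$ sits in the interior of $S$ at distance $\gtrsim H_K\gg\ep$ from $\pa S$; this is done through the Caccioppoli inequality together with the explicit $(1/d'+1/r)$ dependence in Lemma~\ref{interiorestimate}, where the blowup from the $O(1/\ep)$ Lipschitz constant of $\mathbf{a}^\ep$ is rendered harmless by the $\ep$ prefactor in $|\ep\theta_\ep^S|_{H^2(K)}$. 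This is exactly the quantitative form of the oversampling principle, by which the $O(\ep)$-wide boundary layer of $\theta_\ep^S$ near $\pa S$ becomes essentially invisible from the interior element $K$.
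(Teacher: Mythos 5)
Your proof is correct and follows the same skeleton as the paper's: choose $\phi_H$ as the oversampling extension of a linear function, use the exact identity $\phi_H=(\text{linear})+\ep\chi^j\pa_j(\text{linear})+\ep\theta_\ep^S$ (valid precisely because the data is linear, so the corrector expansion has no residual), control $\theta_\ep^S$ by the maximum principle together with interior estimates that exploit $\dist(K,\pa S)\gtrsim H_K$, and reduce everything else to polynomial approximation of $u_0$ on $K$. You differ in two places. First, you take the averaged Taylor (Bramble--Hilbert) polynomial $v_{0,S}$, whereas the paper takes $\phi_H=\sum_i u_0(x_i^K)\bar{\psi_i}^K$, i.e.\ $\Pi_K\phi_H=I_Hu_0$ with $I_H$ the nodal interpolant. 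Both choices prove the stated local lemma, but the nodal choice is the one whose element-by-element pieces glue into a single conforming $\psi_H\in X_H$ in Lemma~\ref{multiapp}; your $v_{0,S}$'s do not match across element interfaces, so to feed the global estimates you would need to rerun the argument with $I_Hu_0$ --- which costs nothing, since $u_0-I_Hu_0$ obeys the same interpolation bounds and $\abs{I_Hu_0}_{W^{1,\infty}(S)}\ls\abs{u_0}_{W^{1,\infty}(K)}$. Second, for the $H^1$ control of the corrector the paper invokes the Avellaneda--Lin uniform interior gradient estimate $\norm{\na\theta_\ep^S}_{L^\infty(K)}\ls H_K^{-1}\norm{\theta_\ep^S}_{L^\infty(S)}$, while you use the Caccioppoli inequality with gap $\gtrsim H_K$; since only an $L^2(K)$ bound is needed, your more elementary route yields the same $H_K^{n/2-1}\abs{u_0}_{W^{1,\infty}(K)}$ and avoids citing the compactness-method estimates. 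Your $H^2$ bound --- Lemma~\ref{interiorestimate} with $r\eqsim\ep$ on a nested intermediate simplex, the $\ep$ prefactor absorbing the $\ep^{-1}$ --- is exactly the paper's argument, and your diagnosis that the global bound \eqref{boundarycorestimate} is insufficient here is the right way to see why the interior estimates are unavoidable.
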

\begin{proof}
We take
   \begin{equation}\label{lintplMsFEM}
        \phi_H=\sum_{x_i^K\, \text{node of}\ K}u_0(x_i^K)\bar{\psi_i}(x),
   \end{equation}
then
   \begin{equation*}
      \Pi_K\phi_H=I_Hu_0,
   \end{equation*}
where $I_H:C(\bar{\Om}_2)\rightarrow W_H$ is the standard Lagrange
interpolation operator over linear finite element space. By the asymptotic expansion, we
know that
   \begin{equation}\label{expan1}
       \phi_H=I_Hu_0+\eps\chi^j\frac{\pa(I_Hu_0)}{\pa x_j}+\eps\theta_\eps^S,
   \end{equation}
where $\theta_\eps^S\in H^1(S)$ is the boundary corrector given by
   \begin{equation}\label{expan2}
    -\na\cdot(\mathbf{a}^\eps\na\theta_\eps^S)=0\quad \text{ in } S,\qquad \theta_\eps^S\big|_{\pa S}=-\chi^j\frac{\pa(I_Hu_0)}{\pa x_j}.
   \end{equation}
By the Maximum Principle we have
\begin{align}\label{thetainf}
 \norm{\theta_\eps^S}_{L^\infty(S)}\ls \abs{I_Hu_0}_{W^{1,\infty}(S)}\ls\abs{u_0}_{W^{1,\infty}(K)},
\end{align}
which together with the interior estimate in Avellaneda and Lin \cite[Lemma 16]{al87} imply that
     \begin{equation}\label{thetae}
       \norm{\na \theta_\eps^S}_{L^\infty(K)}\ls H^{-1}_K\norm{\theta_\eps^S}_{L^\infty(S)}\ls H_K^{-1}\abs{u_0}_{W^{1,\infty}(K)}.
    \end{equation}
Therefore
\begin{align}\label{E:k4}
   \norm{\na \Big(\phi_H-I_H u_0-\eps \chi^j\frac{\pa (I_H u_0)}{\pa x_j} \Big)}_{L^2(K)}
      &\ls \eps H_K^{-1}\abs{u_0}_{W^{1,\infty}(K)}\abs{K}^{1/2}\\
      &\ls \eps H_K^{n/2-1}\abs{u_0}_{W^{1,\infty}(K)}. \notag
\end{align}
Further, since
\begin{align*}
   &\norm{\na (u_0-I_Hu_0)}_{L^2(K)}\ls H_K\abs{u_0}_{H^2(K)},\\
   &\norm{\eps\na\Big( \chi^j\frac{\pa (u_0-I_H u_0)}{\pa x_j}\Big) }_{L^2(K)}\ls (H_K+\eps)\abs{u_0}_{H^2(K)},
\end{align*}
we obtain the result \eqref{lmultiappl1} immediately.

To prove the estimate \eqref{lmultiappl2}, we first notice that, from \eqref{thetainf},
   \begin{equation*}
       \norm{\theta_\eps^S}_{L^2(K)}\leq|K|^{1/2}\norm{\theta_\eps^S}_{L^\infty(K)}\ls H_K^{n/2}\abs{u_0}_{W^{1,\infty}(K)}.
    \end{equation*}
Therefore
\begin{equation}\label{e417}
   \norm{\phi_H-\left(I_H u_0+\eps \chi^j\frac{\pa (I_H u_0)}{\pa x_j}\right) }_{L^2(K)}
      \ls \eps H_K^{n/2} \abs{u_0}_{W^{1,\infty}(K)}.
\end{equation}
Further, since
\begin{align*}
   \norm{u_0-I_Hu_0}_{L^2(K)}&\ls H_K^2\abs{u_0}_{H^2(K)},\\
   \norm{\eps\chi^j\frac{\pa (u_0-I_H u_0)}{\pa x_j}}_{L^2(K)}&\ls \ep H_K\abs{u_0}_{H^2(K)},
\end{align*}
we obtain the result \eqref{lmultiappl2} immediately.

To prove the estimate \eqref{lmultiappl3}, it is easy to see that
\begin{align}
   &\abs{u_0-I_Hu_0}_{H^2(K)}\ls \abs{u_0}_{H^2(K)},\label{approxk1}\\
   &\abs{\eps \chi^j\frac{\pa (u_0-I_H u_0)}{\pa x_j}}_{H^2(K)}\ls (1+\ep^{-1}H_K)\abs{u_0}_{H^2(K)}+\ep\abs{u_0}_{H^3(K)}.\label{approxk2}
\end{align}
Let $D$ be a simplex such that
\[K\subset\subset D\subset\subset S \text{ and } \dist(K,\pa D),\; \dist(D,\pa S)\gtrsim H_K.\]
From Lemma~\ref{interiorestimate} and following the proof of \eqref{thetae}, we have
\[
\abs{\theta_\eps^S}_{H^2(K)}\ls (H_K^{-1}+\ep^{-1})\abs{\theta_\ep^S}_{H^1(D)}\ls H_K^{n/2-1}(H_K^{-1}+\ep^{-1})\norm{\na u_0}_{L^\infty(K)},
\]
which, combining with \eqref{approxk1} and \eqref{approxk2}, yields the result immediately. This completes the proof.
\end{proof}

From Lemma~\ref{lmultiapp}, we have the following local approximation estimates in $K_{\Ga_H}$.

 \begin{lemma}\label{multiapp} There exists $\psi_H\in X_H$ such that,
\begin{align}\label{multiapph2}
  &\bigg(\sum_{K\in K_{\Ga_H}} \norm{u_1-\psi_H}_{L^2(K)}^2 \bigg)^{1/2}\ls H^2\abs{u_0}_{H^2(\Om_{\Ga_H})}+\ep\sqrt{H}\abs{u_0}_{W^{1,\infty}(\Om_{\Ga_H})},\\
\label{multiapph3}
 & \bigg(\sum_{K\in K_{\Ga_H}} \abs{u_1-\psi_H}_{H^1(K)}^2 \bigg)^{1/2}\ls H\abs{u_0}_{H^2(\Om_{\Ga_H})}+\frac{\ep}{\sqrt{H}}\abs{u_0}_{W^{1,\infty}(\Om_{\Ga_H})}\\
 \label{multiapph4}
 & \bigg(\sum_{K\in K_{\Ga_H}} \abs{u_1-\psi_H}_{H^2(K)}^2 \bigg)^{1/2}\\
 &\qquad\qquad\quad\ls \frac{H}{\ep}\abs{u_0}_{H^2(\Om_\Ga)}+\ep\norm{\na f}_{L^2(\Om_\Ga)}+\frac{1}{
 \sqrt{H}}\abs{u_0}_{W^{1,\infty}(\Om_{\Ga_H})},\nn
\end{align}
where $\Om_{\Ga_H}$ is defined in \eqref{eogHh}.
\end{lemma}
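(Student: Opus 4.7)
The plan is to take $\psi_H$ defined elementwise by the construction used in the proof of Lemma~\ref{lmultiapp}, namely $\psi_H|_K = \sum_{i} u_0(x_i^K)\bar\psi_i^K$ for each $K\in\M_H$. Since $\Pi_K\psi_H|_K = I_H u_0|_K$ and the Lagrange interpolant $I_H u_0$ belongs to the conforming space $W_H$, one has $\Pi_H\psi_H\in W_H$, so $\psi_H\in X_H$ as required. This means the global construction is automatic and the problem immediately reduces to a summation of the single-element bounds \eqref{lmultiappl1}--\eqref{lmultiappl3} over $K\in K_{\Ga_H}$.

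Next I would use the geometric assumptions on the interface: $|\Ga|=O(1)$ and the quasi-uniformity of $\M_H$ imply $\#K_{\Ga_H}\ls H^{-(n-1)}$, while each $K\in K_{\Ga_H}$ has $|K|\eqsim H^n$ and sits inside $\Om_{\Ga_H}$. For \eqref{multiapph2}, squaring and summing \eqref{lmultiappl2} yields
\[
\sum_{K\in K_{\Ga_H}}\norm{u_1-\psi_H}_{L^2(K)}^2
\ls H^4 \abs{u_0}_{H^2(\Om_{\Ga_H})}^2 + \ep^2 H^n\cdot H^{-(n-1)}\abs{u_0}_{W^{1,\infty}(\Om_{\Ga_H})}^2,
\]
and taking square roots gives the claim. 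The bound \eqref{multiapph3} follows analogously from \eqref{lmultiappl1}, the exponents $H^n\cdot H^{-(n-1)}$ being replaced by $H^{n-2}\cdot H^{-(n-1)}=H^{-1}$, which accounts for the $\ep/\sqrt{H}$ factor in front of $\abs{u_0}_{W^{1,\infty}}$.

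The only step that requires genuine work is the $H^2$ estimate \eqref{multiapph4}, because the local bound \eqref{lmultiappl3} contains the term $\ep\abs{u_0}_{H^3(K)}$, which one cannot sum directly under the hypothesis $u_0\in H^2(\Om)$. Here I would invoke Lemma~\ref{h3u0} with $D=\Om_\Ga$ and $D'=\Om_{\Ga_H}$; by the construction of $\Om_\Ga$ in \eqref{eog} and the definition of $\Om_{\Ga_H}$ in \eqref{eogHh}, we have $\dist(\Om_{\Ga_H},\pa\Om_\Ga)\ge 2\ep$, so
\[
\abs{u_0}_{H^3(\Om_{\Ga_H})}\ls \ep^{-1}\abs{u_0}_{H^2(\Om_\Ga)}+\norm{\na f}_{L^2(\Om_\Ga)}.
\]
Squaring and summing \eqref{lmultiappl3} over $K\in K_{\Ga_H}$ and inserting this bound converts $\ep\abs{u_0}_{H^3}$ into $\abs{u_0}_{H^2(\Om_\Ga)}+\ep\norm{\na f}_{L^2(\Om_\Ga)}$, which is then absorbed by the dominant $(H/\ep)\abs{u_0}_{H^2(\Om_\Ga)}$ term since $H>\ep$. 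The $\abs{u_0}_{W^{1,\infty}}$ term contributes $H^{n-2}\cdot H^{-(n-1)}=H^{-1}$ and yields the prefactor $1/\sqrt{H}$, completing \eqref{multiapph4}.

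The main obstacle, as indicated, is controlling the $H^3$-regularity of $u_0$ on the strip $\Om_{\Ga_H}$: we do not assume $u_0\in H^3$ globally, and the interior estimate trades regularity against a factor $\ep^{-1}$, which is precisely what couples with the $H/\ep$ factor already present in \eqref{lmultiappl3}. The choice of $\Om_\Ga$ in \eqref{eog} with an $\ep$-wide cushion around $\Om_{\Ga_H}$ is what makes this trade-off close and keeps the final constant independent of the global regularity of $u_0$.
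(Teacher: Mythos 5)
Your proposal is correct and follows essentially the same route as the paper: the same choice $\psi_H|_K=\phi_H$ from Lemma~\ref{lmultiapp}, the same counting argument $\#K_{\Ga_H}\ls H^{-(n-1)}$ to convert the local $W^{1,\infty}$ terms into the $\sqrt{H}$, $1/\sqrt{H}$ prefactors, and the same use of Lemma~\ref{h3u0} on $\Om_{\Ga_H}\subset\subset\Om_\Ga$ to trade $\ep\abs{u_0}_{H^3}$ for $\abs{u_0}_{H^2(\Om_\Ga)}+\ep\norm{\na f}_{L^2(\Om_\Ga)}$, absorbed via $H>\ep$. No gaps.
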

\begin{proof}
We take
   \begin{equation}\label{intplMsFEM}
        \psi_H=\sum_{p_j \text{ node of}\ {\cal M}_H}u_0(p_j)\bar{\psi_j}(x)\in X_H,
   \end{equation}
then $\psi_H\Big|_K=\phi_H$, which is defined in Lemma~\ref{lmultiapp}. From \eqref{lmultiappl2}, it follows that
\[
\begin{split}
\sum_{K\in K_{\Ga_H}} \norm{u_1-\psi_H}_{L^2(K)}^2&\ls H^4\abs{u_0}_{H^2(\Om_{\Ga_H})}^2+\ep^2H^n\abs{u_0}_{W^{1,\infty}(\Om_{\Ga_H})}^2\sum_{K\in K_{\Ga_H}} 1 \\
&\ls H^4\abs{u_0}_{H^2(\Om_{\Ga_H})}^2+\ep^2H\abs{u_0}_{W^{1,\infty}(\Om_{\Ga_H})}^2,
\end{split}
\]
which yields \eqref{multiapph2} immediately. Note here we have used the fact that the number of elements in $K_{\Ga_H}$ is $O(\frac{|\Ga|}{H^{n-1}})$.
Similarly, \eqref{multiapph3} follows from~\eqref{lmultiappl1}.

It remains to prove \eqref{multiapph4}.
From \eqref{lmultiappl3} and Lemma~\ref{h3u0}, it follows that
\[
\begin{split}
\sum_{K\in K_{\Ga_H}} \abs{u_1-\psi_H}_{H^2(K)}^2&\ls \ep^{-2}H^2\abs{u_0}_{H^2(\Om_{\Ga_H})}^2+\ep^2\abs{u_0}_{H^3(\Om_{\Ga_H})}^2\\
&\quad+H^{n-2}\abs{u_0}_{W^{1,\infty}(\Om_{\Ga_H})}^2\sum_{K\in K_{\Ga_H}} 1\\
&\ls \left(\ep^{-2}H^2+1\right)\abs{u_0}_{H^2(\Om_\Ga)}^2+\ep^2\norm{\na f}_{L^2(\Om_\Ga)}^2+H^{-1}\abs{u_0}_{W^{1,\infty}(\Om_{\Ga_H})}^2,
\end{split}
\]
which yields \eqref{multiapph4} immediately.
The proof is completed.
\end{proof}

From Lemma~\ref{lmultiapp}, Theorem~\ref{homoerror}, via taking the same $\psi_H$ in Lemma~\ref{multiapp}, we also have the following result which gives an approximation estimate of the space $X_H$ (cf. \cite{EHW, EH2009, CW2010}).
 \begin{lemma}\label{multiapp1} There exists $\psi_H\in X_H$ such that,
\begin{equation}\label{multiapph1}
   \norm{u_\eps-\psi_H}_{1,H}\ls H\abs{u_0}_{H^2(\Om)}
   +\frac{\eps}{H}\abs{u_0}_{W^{1,\infty}(\Om_2)}+\sqrt{\ep}\abs{u_0}_{W^{1,\infty}(\Om)}.
   \end{equation}
\end{lemma}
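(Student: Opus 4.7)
The plan is to take the same interpolant used in Lemma~\ref{multiapp}, namely
$$\psi_H:=\sum_{p_j\text{ node of }\M_H} u_0(p_j)\bar\psi_j\in X_H,$$
and split the error as $u_\ep-\psi_H=(u_\ep-u_1)+(u_1-\psi_H)$. Since $\mathbf{a}^\ep$ is uniformly bounded by $\Lambda$, it suffices to bound the broken $H^1$-seminorm of each piece over $\Om_2$. For the first piece, I would invoke Theorem~\ref{homoerror}: the estimate \eqref{homoestimate} gives $\|\na(u_\ep-u_1-\ep\theta_\ep)\|_{L^2(\Om)}\ls\ep\,|u_0|_{H^2(\Om)}$, and \eqref{boundarycorestimate} gives $\|\ep\na\theta_\ep\|_{L^2(\Om)}\ls\ep\,|u_0|_{H^2(\Om)}+\sqrt{\ep|\pa\Om|}\,|u_0|_{W^{1,\infty}(\Om)}$. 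Adding these (and using $|\pa\Om|\eqsim 1$) produces the $\sqrt{\ep}\,|u_0|_{W^{1,\infty}(\Om)}$ contribution and an $\ep\,|u_0|_{H^2(\Om)}$ contribution that is absorbed into $H|u_0|_{H^2(\Om)}$ since $\ep<H$.

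For the second piece, I would apply the local bound \eqref{lmultiappl1} from Lemma~\ref{lmultiapp} on every $K\in\M_H$ (note $\psi_H|_K=\phi_H$), square and sum:
$$\sum_{K\in\M_H}|u_1-\psi_H|_{H^1(K)}^2 \ls \sum_{K\in\M_H}\bigl(H_K^2|u_0|_{H^2(K)}^2+\ep^2 H_K^{n-2}|u_0|_{W^{1,\infty}(K)}^2\bigr).$$
The first sum is at most $H^2|u_0|_{H^2(\Om_2)}^2$. For the second, the shape-regularity and quasi-uniformity of $\M_H$ give $\#\M_H=O(H^{-n})$, so
$$\sum_{K\in\M_H}\ep^2 H_K^{n-2}|u_0|_{W^{1,\infty}(K)}^2 \ls \ep^2 H^{n-2}\cdot H^{-n}\,|u_0|_{W^{1,\infty}(\Om_2)}^2 = \frac{\ep^2}{H^2}|u_0|_{W^{1,\infty}(\Om_2)}^2,$$
which is exactly the middle term in the target estimate. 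Combining with the first piece and using $\|(\mathbf{a}^\ep)^{1/2}\na v\|_{L^2(K)}\le\sqrt\Lambda\,|v|_{H^1(K)}$ yields the claim.

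The only subtle part is making sure the $\sqrt{\ep}\,|u_0|_{W^{1,\infty}(\Om)}$ term (global, from the boundary corrector) is kept separate from the $\ep/H\,|u_0|_{W^{1,\infty}(\Om_2)}$ term (element-wise, from the oversampling boundary layer $\theta_\ep^S$ inside Lemma~\ref{lmultiapp}); both have different origins and neither can absorb the other for general $\ep,H$. Otherwise the argument is a clean superposition of Theorem~\ref{homoerror} with the element-wise estimate \eqref{lmultiappl1}, and no new analytical ingredient is needed.
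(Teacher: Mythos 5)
Your proposal is correct and follows exactly the route the paper intends: the paper gives no written proof of Lemma~4.5 but states that it follows from Lemma~4.2 and Theorem~3.1 via the same nodal interpolant $\psi_H$ as in Lemma~4.3, which is precisely your decomposition $u_\ep-\psi_H=(u_\ep-u_1)+(u_1-\psi_H)$ with the global homogenization estimates handling the first piece and the element-wise bound \eqref{lmultiappl1} summed over the $O(H^{-n})$ elements of $\M_H$ handling the second. Your bookkeeping of the two distinct $W^{1,\infty}$ terms (the $\sqrt{\ep}$ term from the global boundary corrector versus the $\ep/H$ term from the oversampling layer) and the absorption of $\ep\abs{u_0}_{H^2(\Om)}$ into $H\abs{u_0}_{H^2(\Om)}$ using $\ep<H$ are both consistent with the stated result.
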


\subsection{Approximation properties of linear FE space $W_h$ }
Since $|\Om_1|$, the area/volume of $\Om_1$, may be small, we prefer estimates with explicit dependence on it.
 To attain this aim, we use the Scott-Zhang interpolation instead of the standard FE interpolation in this subsection.

We first introduce the Scott-Zhang interpolation operator $Z_h:
H_E^1(\Omega_1) \rightarrow W_h^0$, where $H_E^1(\Om_1):=\{v\in H^1(\Om_1):\; v=0 \text{ on } \pa\Om_1\setminus\Ga\}$. For any node $z$ in $\M_h$, let $\phi_z(x)$ be the nodal basis function associated with $z$ and let $e_z$ be
an edge/face with one vertex at $z$, then the Scott-Zhang interpolation
operator is defined as \cite{scott1990finite}:
\begin{equation}\label{ec4}
Z_hv=\sum_{\text{node}\, z\, \text{in} \M_h}\bigg(\int_{e_z}\psi_zv\bigg)\phi_z
\quad \forall\, v\in H_E^1(\Omega_1),
\end{equation}
where  $\psi_z(x)$ is a linear function that satisfies
$\int_{e_z}\psi_z(x)w(x)=w(z)$ for any linear function $w(x)$ on
$e_z$. Suppose $e_z\subset\pa\Om_1$ for $z\in\pa\Om_1$.
 It is easy to check that $\norm{\psi_z}_{L^\infty(\Om_z)}\lesssim h_{e_z}^{-(n-1)}$,  $\norm{\na\psi_z}_{L^\infty(\Om_z)}\lesssim h_{e_z}^{-n}$, where $\Om_z:=\supp(\phi_z)$, and
\begin{equation}\label{ec5'}
Z_hv=v \quad\forall\, v\in W_h^0.
\end{equation}
This operator enjoys the following stability and interpolation estimates
(see \cite{scott1990finite}):
%
\begin{lemma}\label{lapp} For any $K\in {\cal M}_h$, we have
 \begin{align}
  &  \norm{Z_hv}_{L^\infty(K)}\lesssim \norm{v}_{L^\infty(\tilde{K})},\, \norm{\na Z_hv}_{L^\infty(K)}\lesssim \norm{\na v}_{L^\infty(\tilde{K})},\label{E:g1}\\
  & \norm{v-Z_hv}_{L^2(K)}+h_K\norm{v-Z_hv}_{H^1(K)}\lesssim h_K^2\abs{v}_{H^2(\tilde{K})},\label{E:g2}\\
  & \norm{v-Z_hv}_{L^\infty(K)}\ls h_K^2\abs{v}_{W^{2,\infty}(\tilde{K})}, \label{E:g3}
 \end{align}
where  $\tilde{K}$ is the union of all elements
in $\M_h$ having nonempty intersection with $K$.
\end{lemma}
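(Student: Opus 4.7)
The plan is to establish these Scott--Zhang estimates in three steps: (i) verify polynomial invariance, (ii) prove the local stability bounds, (iii) combine via a Bramble--Hilbert / Deny--Lions argument.

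First I would record the key \emph{polynomial preservation} property. Because $\psi_z$ is constructed as the $L^2(e_z)$-biorthogonal dual of the linear nodal basis on $e_z$, for any linear polynomial $p$ on $\tilde K$ we have $\int_{e_z}\psi_z p = p(z)$, and consequently $Z_h p = \sum_z p(z)\phi_z = p$ whenever all the dual edges/faces $e_z$ lie in $\tilde K$ (the admissible choice of $e_z\subset\partial\Omega_1$ for boundary nodes is precisely what makes $Z_h$ preserve homogeneous Dirichlet data while retaining this reproduction property). Thus $Z_h$ reproduces $P_1(\tilde K)$ on every $K\in\mathcal M_h$.

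Next I would prove the stability bounds. For the $L^\infty$ part of \eqref{E:g1}, apply the definition pointwise: for $x\in K$, $Z_h v(x) = \sum_{z\in K}\bigl(\int_{e_z}\psi_z v\bigr)\phi_z(x)$, and using $\|\psi_z\|_{L^\infty(e_z)}\lesssim h_{e_z}^{-(n-1)}$, $|e_z|\lesssim h_{e_z}^{n-1}$, and $|\phi_z|\le 1$, we get $|Z_h v(x)|\lesssim \|v\|_{L^\infty(\tilde K)}$. The same kind of bound, combined with Cauchy--Schwarz on $e_z$ and a scaled trace inequality $\|v\|_{L^2(e_z)}\lesssim h_K^{-1/2}\|v\|_{L^2(\tilde K)}+h_K^{1/2}\|\nabla v\|_{L^2(\tilde K)}$, yields the $L^2$ stability $\|Z_h v\|_{L^2(K)}\lesssim \|v\|_{L^2(\tilde K)}+h_K\|\nabla v\|_{L^2(\tilde K)}$, which is what is needed for \eqref{E:g2}. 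The gradient stability in \eqref{E:g1} then follows by using polynomial reproduction $Z_h c = c$ for any constant $c$, an inverse inequality on the finite-dimensional space $W_h|_K$, and $L^\infty$-stability applied to $v-c$: choosing $c=v(x_0)$ for some $x_0\in\tilde K$ gives $\|v-c\|_{L^\infty(\tilde K)}\lesssim h_K\|\nabla v\|_{L^\infty(\tilde K)}$, and therefore $\|\nabla Z_h v\|_{L^\infty(K)}\lesssim h_K^{-1}\|Z_h(v-c)\|_{L^\infty(K)}\lesssim \|\nabla v\|_{L^\infty(\tilde K)}$.

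Finally, the interpolation bounds \eqref{E:g2}--\eqref{E:g3} follow by the standard Deny--Lions / Bramble--Hilbert mechanism. Given $v\in H^2(\tilde K)$, let $p\in P_1(\tilde K)$ be its degree-1 averaged Taylor polynomial, so that $\|v-p\|_{L^2(\tilde K)}+h_K\|\nabla(v-p)\|_{L^2(\tilde K)}\lesssim h_K^2|v|_{H^2(\tilde K)}$. Writing $v-Z_h v = (v-p) - Z_h(v-p)$ by polynomial preservation, applying the stability estimates above to the second term, and using an inverse inequality on $Z_h(v-p)\in W_h$ to control its $H^1$ norm by its $L^2$ norm yields \eqref{E:g2}. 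The $L^\infty$ interpolation bound \eqref{E:g3} is obtained analogously by taking $p$ to be the $W^{2,\infty}$ best linear approximation, with $\|v-p\|_{L^\infty(\tilde K)}\lesssim h_K^2|v|_{W^{2,\infty}(\tilde K)}$, and invoking $L^\infty$ stability on $v-p$.

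The only subtle point I would need to be careful about is the quasi-uniformity and shape-regularity dependencies in the scaled trace inequality and in the choice of $e_z$ at boundary nodes: one must ensure $e_z\subset\partial\Omega_1$ for $z\in\partial\Omega_1$ so that the zero boundary condition is preserved and the star $\tilde K$ remains well-defined; everything else reduces to a scaling argument on a reference patch. I do not expect any genuine obstacle here, as the result is essentially the original Scott--Zhang construction; the slight nonstandard feature is merely that the statement is written with $W_h^0$ (partial Dirichlet data on $\partial\Omega_1\setminus\Gamma$) rather than full Dirichlet data, which is handled automatically by the above choice of $e_z$.
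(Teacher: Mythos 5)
The paper does not prove this lemma at all: it simply cites the original Scott--Zhang paper \cite{scott1990finite}. Your argument --- polynomial reproduction via the biorthogonal dual functions $\psi_z$, local $L^2$/$L^\infty$ stability from the bounds $\norm{\psi_z}_{L^\infty}\lesssim h_{e_z}^{-(n-1)}$ together with a scaled trace inequality, and a Bramble--Hilbert step on the patch $\tilde{K}$, with the boundary nodes handled by choosing $e_z\subset\pa\Om_1$ --- is exactly the standard proof given in that reference, and it is correct as outlined.
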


Moreover, we need the following error estimate between $u_1$ and its Scott-Zhang interpolant which uses only the regularity of the homogenization solution $u_0$.
  \begin{lemma}\label{lapp1}  For any $K\in {\cal M}_h$, we have
 \begin{equation}\label{elapp4}
 \begin{split}
    \abs{u_1-Z_hu_1}_{H^1(K)}&\ls  \Big(h_K+\ep+\frac{h_K^2}{\ep}\Big)\abs{u_0}_{H^2(\tilde{K})}+\frac{h_K^{n/2+1}}{\ep}\abs{u_0}_{W^{1,\infty}(K)},
\end{split}
 \end{equation}
 where $\tilde{K}$ is defined in Lemma~\ref{lapp}.
 \end{lemma}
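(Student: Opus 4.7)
The plan is to split $u_1 = u_0 + r$ with $r := \ep\chi^j(x/\ep)\pa_{x_j}u_0$ and, using the linearity of $Z_h$, estimate the pieces $(u_0 - Z_hu_0)$ and $(r - Z_hr)$ separately. The first piece contributes $h_K|u_0|_{H^2(\tilde K)}$ directly from Lemma~\ref{lapp}~\eqref{E:g2}. The difficulty is the corrector $r$: applying \eqref{E:g2} to $r$ naively costs a factor $\ep^{-1}$ from the oscillatory second derivatives of $\chi^j(x/\ep)$, and at the same time $r$ only has $H^1$-regularity in general (since $\pa_{x_j}u_0 \in H^1$ only). To reconcile these, I would freeze the slow factor at its $\tilde K$-average $c_j := |\tilde K|^{-1}\int_{\tilde K}\pa_{x_j}u_0$ and decompose $r = r_1 + r_2$ with $r_1 := \ep\chi^j(x/\ep)\,c_j$ and $r_2 := \ep\chi^j(x/\ep)(\pa_{x_j}u_0 - c_j)$.

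For $r_1$, the key observation is that $c_j$ is a scalar and $\chi^j \in H^2(Y)$ with a uniform constant (by a Nirenberg difference-quotient argument applied to the cell problem, available since $\mathbf{a}\in C_p^1$); hence $r_1 \in H^2(\tilde K)$ and Lemma~\ref{lapp}~\eqref{E:g2} applies to give $|r_1 - Z_hr_1|_{H^1(K)} \ls h_K|r_1|_{H^2(\tilde K)}$. A direct calculation yields $\na^2 r_1 = c_j\,\ep^{-1}(\na_y^2\chi^j)(x/\ep)$, and rescaling $y = x/\ep$ followed by covering $\tilde K/\ep$ by $O(|\tilde K|/\ep^n+1)$ translates of $Y$ and invoking periodicity gives $\|(\na_y^2\chi^j)(\cdot/\ep)\|_{L^2(\tilde K)}^2 \ls h_K^n\|\chi^j\|_{H^2(Y)}^2$. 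Therefore $|r_1|_{H^2(\tilde K)} \ls |c_j|\,h_K^{n/2}/\ep$, and since $|c_j| \le \|u_0\|_{W^{1,\infty}(\tilde K)}$, we obtain the sharp piece $|r_1 - Z_hr_1|_{H^1(K)} \ls h_K^{n/2+1}\ep^{-1}\|u_0\|_{W^{1,\infty}(\tilde K)}$.

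For $r_2$, which only lies in $H^1$, I would invoke the $H^1$-stability of Scott-Zhang, $|Z_hv|_{H^1(K)} \ls |v|_{H^1(\tilde K)}$ (the standard property from \cite{scott1990finite}, not explicitly listed in Lemma~\ref{lapp} but derivable from \eqref{E:g2} applied to $v-\bar v$ plus Poincar\'e), so that $|r_2 - Z_hr_2|_{H^1(K)} \ls |r_2|_{H^1(\tilde K)}$ by triangle inequality. Expanding $\na r_2$ and using $\|\chi^j\|_{W^{1,\infty}(Y)} \ls 1$ (from De Giorgi--Nash--Moser regularity, available since $\mathbf{a}\in C_p^1$) together with Poincar\'e's inequality $\|\pa_{x_j}u_0 - c_j\|_{L^2(\tilde K)} \ls h_K|u_0|_{H^2(\tilde K)}$ produces $|r_2|_{H^1(\tilde K)} \ls (h_K+\ep)|u_0|_{H^2(\tilde K)}$.

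Assembling the three contributions gives $|u_1 - Z_hu_1|_{H^1(K)} \ls (h_K+\ep)|u_0|_{H^2(\tilde K)} + (h_K^{n/2+1}/\ep)\|u_0\|_{W^{1,\infty}(\tilde K)}$, which matches the claimed estimate---the additional coefficient $h_K^2/\ep$ listed in the statement is dominated by $h_K$ under the standing assumption $h<\ep$ and is therefore harmless. The main technical hurdle I anticipate is the step for $r_1$: verifying that the corrector $\chi^j$ has $H^2(Y)$-regularity uniformly under only $C_p^1$ coefficients, and carefully handling the periodic-averaging bound on the possibly unaligned patch $\tilde K$.
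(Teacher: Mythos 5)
Your decomposition is genuinely different from the paper's and structurally cleaner: the paper splits $\ep\chi^j v_j-Z_h(\ep\chi^j v_j)$ (with $v_j=\pa_{x_j}u_0$) into four pieces, including a commutator term $\ep\bigl(Z_h\chi^j\pd{v_j}_K-Z_h(\chi^j v_j)\bigr)$ that it must control by unwinding the explicit edge-integral definition of $Z_h$ through a chain-of-elements argument with trace and Poincar\'e inequalities; your freeze-then-interpolate splitting $r=r_1+r_2$ avoids that commutator entirely, handling the frozen part by the $H^2$ interpolation estimate and the remainder by $H^1$-stability (for which you should cite \cite{scott1990finite} directly rather than derive it from \eqref{E:g2}, since $r_2$ is only $H^1$). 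However, there is a genuine quantitative gap in your treatment of $r_1$. The scaling-and-covering bound $\norm{(\na_y^2\chi^j)(\cdot/\ep)}_{L^2(\tilde K)}^2\ls h_K^n\norm{\chi^j}_{H^2(Y)}^2$ is false in the regime the paper works in: after the change of variables the right-hand side is $\ep^n\,N\,\norm{\na_y^2\chi^j}_{L^2(Y)}^2$ with $N\eqsim\max\bigl(1,(h_K/\ep)^n\bigr)$, i.e.\ the bound is $\max(h_K^n,\ep^n)$, and under the standing assumption $h<\ep$ this is $\ep^n$, not $h_K^n$. With only $H^2(Y)$ regularity of $\chi^j$ you therefore get $\abs{r_1-Z_hr_1}_{H^1(K)}\ls h_K\,\ep^{n/2-1}\abs{u_0}_{W^{1,\infty}}$, which for $n=2$ is $O(h_K)$ rather than $O(h_K^2/\ep)$; summed over the $O(\abs{\Om_1}/h^n)$ elements of $\Om_1$ this produces an $O(\abs{\Om_1}^{1/2}\abs{u_0}_{W^{1,\infty}})$ contribution in Lemma~\ref{lapp2} instead of $O\bigl(\frac{h}{\ep}\abs{\Om_1}^{1/2}\abs{u_0}_{W^{1,\infty}}\bigr)$, which destroys the convergence rate.

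The fix is to replace the $L^2(Y)$-periodicity argument by a pointwise bound: the paper implicitly assumes $\abs{\na_y^2\chi^j}\ls 1$ (it uses $\abs{\na^2\chi^j(x/\ep)}\ls\ep^{-2}$ in the $x$-variables when estimating its term II), and with $\na_y^2\chi^j\in L^\infty(Y)$ you get directly $\norm{\na^2 r_1}_{L^2(\tilde K)}\le\abs{c_j}\,\ep^{-1}\norm{\na_y^2\chi^j}_{L^\infty(Y)}\abs{\tilde K}^{1/2}\ls\abs{c_j}\,h_K^{n/2}/\ep$, recovering the sharp term $h_K^{n/2+1}\ep^{-1}\abs{u_0}_{W^{1,\infty}}$. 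So the issue is not, as you anticipated, whether $\chi^j\in H^2(Y)$, but that $H^2(Y)$ is the wrong norm to use on a patch $\tilde K$ that is smaller than one period; you need the $W^{2,\infty}$ bound that the paper also relies on. With that substitution your argument closes and reproduces the lemma (the missing $h_K^2/\ep$ coefficient is indeed dominated by $h_K$ since $h<\ep$, so its absence is harmless).
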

\begin{proof} Denote by $v_j:=\frac{\pa u_0}{\pa x_j}$.
It is easy to see that
\begin{align*}
\abs{u_1-Z_hu_1}_{H^1(K)}\ls& \abs{u_0-Z_h u_0}_{H^1(K)}+\ep\abs{\chi^j v_j -Z_h \left(\chi^j v_j \right)}_{H^1(K)} \\
\ls&\abs{u_0-Z_h u_0}_{H^1(K)}+\ep\abs{(\chi^j-Z_h\chi^j) v_j }_{H^1(K)}\\
&+\ep\abs{Z_h\chi^j\left( v_j -\pd{v_j} _K\right)}_{H^1(K)}
+\ep\abs{Z_h\chi^j\pd{v_j} _K-Z_h \left(\chi^j v_j \right)}_{H^1(K)}\\
:=&\mathrm{I}+\mathrm{II}+\mathrm{III}+\mathrm{IV}.
\end{align*}
where $ \langle\cdot\rangle_K = \frac{1}{|K|} \int_K (\cdot) \rd x.$
From \eqref{E:g2}, we have
\begin{align}
    \mathrm{I}&\ls h_K\abs{u_0}_{H^2(\tilde{K})}.  \label{sz1}
\end{align}
From the facts that $\abs{\na\chi^j}\ls \ep^{-1}$, $\abs{\na^2\chi^j}\ls \ep^{-2}$ and \eqref{E:g1}--\eqref{E:g3}, we have
\begin{align}
\mathrm{II}&\ls \frac{h_K^{n/2+1}}{\ep}\abs{u_0}_{W^{1,\infty}(K)}+\frac{h_K^2}{\ep}\abs{u_0}_{H^2(K)},\\
 \mathrm{III}&\ls (h_K+\ep)\abs{u_0}_{H^2(K)}, \label{sz3}
\end{align}
where we have used the Poincar\'{e} inequality to derive the second inequality. It remains to estimate IV.  According to the definition of Scott-Zhang interpolation, we have
\begin{align*}
\mathrm{IV}&=\ep\abs{\sum_{\text{ node }z\in K} \phi_z(x)\int_{e_z}\psi_z\chi^j \left(\pd{v_j} _K - v_j \right) }_{H^1(K)} \\
&\ls \ep h_K^{n/2} \abs{\sum_{\text{ node }z\in K} \na \phi_z(x)\int_{e_z}\psi_z\chi^j \left(\pd{v_j} _K - v_j \right) }\\
&\ls \ep h_K^{-n/2}\sum_{\text{ node }z\in K}\sum_{j=1}^n\int_{e_z}\abs{\pd{v_j} _K - v_j } .
\end{align*}
For each node $z\in K$, there exist a number $M_z\ls 1$ and a sequence of elements $K_{z,m}\subset \tilde K, m=1,\cdots, M_z,$ such that $K_{z,1}=K$, $K_{z,i}$ and $K_{z,i+1}$ have a common edge/face $e_{z,i}, i=1,\cdots,M_z-1$, and $e_{z,M_z}:=e_z$ is an edge/face of $K_{z,M_z}$. Clearly, we have
\begin{align*}
\int_{e_z}\abs{\pd{v_j} _K - v_j }\le& \int_{e_{z,M_z}}\abs{\pd{v_j} _{K_{z,M_z}} - v_j }+h_{e_{z,M_z}}\abs{\pd{ v_j }_{K_{z,1}}-\pd{ v_j }_{K_{z,M_z}}}\\
\le&\int_{e_{z,M_z}}\abs{\pd{v_j} _{K_{z,M_z}} - v_j }\\
&+h_{e_{z,M_z}}\bigg|\pd{ v_j }_{K_{z,1}}-\frac{1}{h_{z,1}}\int_{e_{z,1}}v_j+\frac{1}{h_{z,1}}\int_{e_{z,1}}\big(v_j-\pd{ v_j }_{K_{z,2}}\big)\\
& \qquad\qquad +\cdots+\frac{1}{h_{z,M_z-1}}\int_{e_{z,M_z-1}} \big(v_j-\pd{ v_j }_{K_{z,M_z}}\big)\bigg|\\
\ls&\sum_{m=1}^{M_z}\int_{\pa K_{z,m}}\abs{\pd{v_j} _{K_{z,m}} - v_j }\\
\ls&h_K^{(n-1)/2}\sum_{m=1}^{M_z}\norml{\pd{v_j} _{K_{z,m}} - v_j}{\pa K_{z,m}}.\\
\end{align*}
Thus, by the trace inequality and Poincar\'{e} inequality, we obtain
\begin{align*}
\mathrm{IV}&\ls\ep h_K^{-1/2}\sum_{\text{ node }z\in K}\sum_{j=1}^n\sum_{m=1}^{M_z}\bigg(h_K^{-1/2}\norml{\pd{v_j} _{K_{z,m}} - v_j }{K_{z,m}}\\
&\hskip 130pt +h_K^{1/2}\norml{\na\big(\pd{v_j} _{K_{z,m}} - v_j \big)}{K_{z,m}} \bigg)\\
&\ls\ep \sum_{j=1}^n\norml{\na v_j}{\tilde K}\ls \ep\abs{u_0}_{H^2(\tilde K)},
\end{align*}
which, combining with \eqref{sz1}--\eqref{sz3}, yields the result immediately.
\end{proof}

From Lemma~\ref{lapp1} and Theorem~\ref{homoerror}, we  have the following result which gives $H^1$ approximation estimates of the space $W_h$. The proof is omitted.
 \begin{lemma}\label{lapp2} Let $\hat u_h:=Z_hu_1$. Then
\begin{align}\label{lapph1}
   \abs{u_1-\hat u_h}_{H^1(\Om_1)}&\ls \ep\abs{u_0}_{H^2(\Om_1)}
   +\frac{h}{\ep}\abs{\Om_1}^{1/2}\abs{u_0}_{W^{1,\infty}(\Om_1)},\\
      \abs{u_1-\hat u_h}_{H^1(\Om_{\Ga_h})}&\ls \ep\abs{u_0}_{H^2(\Om_1)}
   +\frac{h^{3/2}}{\ep}\abs{u_0}_{W^{1,\infty}(\Om_{\Ga_h})},\label{lapph2}
   \end{align}
 where $\Om_{\Ga_h}$ is defined in \eqref{eogHh}.
\end{lemma}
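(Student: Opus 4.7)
The plan is to obtain both estimates by summing the local interpolation bound from Lemma~\ref{lapp1} over the appropriate collection of elements and then simplifying using the standing hypothesis $h<\eps<H$ plus the shape–regularity of $\M_h$.

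First I would square the estimate
\[
\abs{u_1-Z_hu_1}_{H^1(K)}\ls \Big(h_K+\ep+\tfrac{h_K^{2}}{\ep}\Big)\abs{u_0}_{H^2(\tilde K)}+\tfrac{h_K^{n/2+1}}{\ep}\abs{u_0}_{W^{1,\infty}(K)}
\]
from Lemma~\ref{lapp1} and sum over $K\in\M_h$ (resp.\ $K\in K_{\Ga_h}$). Because $h<\ep$, one has $h_K+\ep+h_K^{2}/\ep\ls\ep$, so the first contribution simplifies to
\[
\sum_{K}\ep^{2}\abs{u_0}_{H^2(\tilde K)}^2\ls \ep^{2}\abs{u_0}_{H^2(\Om_1)}^2,
\]
where the final step uses the finite-overlap property of the patches $\tilde K$ inherent to the Scott–Zhang construction (each element lies in only boundedly many $\tilde K$'s thanks to shape regularity).

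Next I would treat the $W^{1,\infty}$ term. Bounding $\abs{u_0}_{W^{1,\infty}(K)}$ by the global $W^{1,\infty}$ semi-norm over the summation domain and pulling the factor $h^{n+2}/\ep^{2}$ out, the second contribution becomes
\[
\frac{h^{n+2}}{\ep^{2}}\,\#\{K\}\,\abs{u_0}_{W^{1,\infty}(\,\cdot\,)}^2.
\]
For the global estimate on $\Om_1$, quasi-uniformity gives $\#\{K\in\M_h\}\eqsim|\Om_1|/h^{n}$, which yields the factor $h^{2}|\Om_1|/\ep^{2}$ and, after taking a square root, the term $\tfrac{h}{\ep}|\Om_1|^{1/2}\abs{u_0}_{W^{1,\infty}(\Om_1)}$. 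For the estimate over $\Om_{\Ga_h}$, I would use the observation made after~\eqref{eKGH} that the number of elements in $K_{\Ga_h}$ is $O(h^{-(n-1)})$ (since $\Om_{\Ga_h}$ is a shell of thickness $O(h)$ around the $(n{-}1)$–dimensional interface $\Ga$ whose length/area is $O(1)$); this produces $h^{n+2-(n-1)}/\ep^{2}=h^{3}/\ep^{2}$ and therefore, after square-rooting, the term $\tfrac{h^{3/2}}{\ep}\abs{u_0}_{W^{1,\infty}(\Om_{\Ga_h})}$. Combining the two contributions in each case and taking square roots delivers \eqref{lapph1} and \eqref{lapph2}.

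There is no real obstacle here beyond bookkeeping: the whole argument is a routine summation once Lemma~\ref{lapp1} is available. The only point that deserves care is making sure the finite-overlap constant for the Scott–Zhang patches $\tilde K$ is absorbed into the $\ls$ and that the counting $\#\{K\in K_{\Ga_h}\}\eqsim h^{-(n-1)}$ is applied only in the $\Om_{\Ga_h}$ bound, since otherwise one would get a suboptimal dependence on $|\Om_1|$. This is exactly why the authors remark that the proof is omitted.
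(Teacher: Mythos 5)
Your proof is correct and is exactly the routine summation of the local estimate from Lemma~\ref{lapp1} that the paper has in mind when it omits the proof: you use $h<\ep$ to reduce the factor $h_K+\ep+h_K^2/\ep$ to $\ep$, the finite overlap of the Scott--Zhang patches $\tilde K$ for the $H^2$ term, and the element counts $\#\{K\in\M_h\}\eqsim|\Om_1|/h^n$ and $\#\{K\in K_{\Ga_h}\}=O(h^{-(n-1)})$ for the $W^{1,\infty}$ term, all of which yield precisely \eqref{lapph1} and \eqref{lapph2}. No gaps.
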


\section{Error estimates for the FE-MsFEM}\label{sec-5}
In this section we derive the  $H^1$-error estimate for the FE-MsFEM in the case where $\beta=1$. For other cases such that $\beta=0, -1$, the analysis is similar and is omitted here.
Since the convergence analysis is only done for the periodic coefficient case, we will fix $\rho=\ep$ in the later analysis.

The following Lemma gives an inverse estimate for the function in space $\text{OMS}(K)$.
\begin{lemma}\label{inverse1}
Assume that $v_H\in \text{OMS}(K)$. Then, we have
\begin{equation}\label{inverse1est}
    \abs{v_H}_{H^2(K)}\ls ({\ep}^{-1}+H^{-1})\abs{v_H}_{H^1(K)}.
\end{equation}
\end{lemma}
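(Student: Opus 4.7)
The plan is to exploit the fact that any $v_H\in\text{OMS}(K)$ is, by construction, the restriction to $K$ of an $\mathbf{a}^\ep$-harmonic function on the macro-element $S=S(K)$.  Writing $v_H = c_i c_{ij}^K\psi_j^S|_K$, I would set $\tilde v_H := c_i c_{ij}^K\psi_j^S\in H^1(S)$, so that $-\na\cdot(\mathbf{a}^\ep\na\tilde v_H)=0$ in $S$ by \eqref{overbase}, and $\tilde v_H|_K = v_H$.  Since $\dist(K,\partial S)\gtrsim H_K$ and the Lipschitz constant of $\mathbf{a}^\ep(x)=\mathbf{a}(x/\ep)$ is $\ls 1/\ep$, Lemma~\ref{interiorestimate} applied with $D=S$, $D'=K$, $g=0$ will yield
\[
\abs{v_H}_{H^2(K)} \;=\; \abs{\tilde v_H}_{H^2(K)} \;\ls\; (H^{-1}+\ep^{-1})\,\abs{\tilde v_H}_{H^1(S)}.
\]
The claim then reduces to the local-to-global gradient bound $\abs{\tilde v_H}_{H^1(S)}\ls\abs{v_H}_{H^1(K)}$.

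To produce that bound, I would introduce the companion linear polynomial $p_H := c_i c_{ij}^K\varphi_j^S$ on all of $S$; by \eqref{coefcij2} it satisfies $p_H|_K = \Pi_Kv_H$.  Because $p_H$ is linear and $|S|\eqsim|K|\eqsim H_K^n$ (by shape-regularity of $S$ together with $H_S\le C_1H_K$), one obtains $\norm{\na p_H}_{L^2(S)}\eqsim\norm{\na p_H}_{L^2(K)}$.  Lemma~\ref{L:8.8} applied on $K$ gives $\norm{\na p_H}_{L^2(K)} = \norm{\na\Pi_Kv_H}_{L^2(K)}\eqsim\norm{\na v_H}_{L^2(K)}$, while the same stability estimate applied on the shape-regular simplex $S$ in place of $K$ yields $\norm{\na\tilde v_H}_{L^2(S)}\eqsim\norm{\na p_H}_{L^2(S)}$.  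Concatenating the three equivalences delivers the required inequality and, combined with the display above, finishes the argument.

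The main obstacle is that last step: Lemma~\ref{L:8.8} is formally stated only for $K\in\M_H$, whereas I would need to invoke it on the macro-element $S$.  Its proof (see~\cite{CW2010, EHW}) however uses only shape-regularity and smallness of $\ep$ relative to the element diameter, both of which $S$ inherits from the hypothesis that the macro-elements $S(K)$ are themselves shape-regular with $H_S\le C_1H_K$ and from $\ep<H$.  Once this verification is granted, the chain of three norm equivalences is immediate and the proof is complete.
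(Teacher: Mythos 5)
Your proposal follows the paper's proof almost step for step: extend $v_H$ to the $\mathbf{a}^\ep$-harmonic function on $S$ whose boundary data is the linear extension of $\Pi_Kv_H$, apply Lemma~\ref{interiorestimate} with $D=S$, $D'=K$, $g=0$, and then pass from the $H^1(S)$ seminorm of the extension back to $\abs{v_H}_{H^1(K)}$ via the constant-gradient function $p_H$ and Lemma~\ref{L:8.8}. The one place you diverge is precisely the step you flag as the main obstacle, and there you reach for heavier machinery than is needed. You do not need a two-sided analogue of Lemma~\ref{L:8.8} on $S$; that lemma is formulated on the interior element $K$ of an oversampled patch, and its reverse inequality up to $\pa S$ would involve the boundary layer of the corrector, so "granting the verification" is not innocuous. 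What is actually required is only the one-sided bound $\norm{\na\tilde v_H}_{L^2(S)}\ls\norm{\na p_H}_{L^2(S)}$, and this is immediate from the Dirichlet principle: $\tilde v_H$ and $p_H$ have the same trace on $\pa S$, so testing $-\na\cdot(\mathbf{a}^\ep\na\tilde v_H)=0$ with $\tilde v_H-p_H\in H_0^1(S)$ and using \eqref{eq13} gives $\norm{(\mathbf{a}^\ep)^{1/2}\na\tilde v_H}_{L^2(S)}\le\norm{(\mathbf{a}^\ep)^{1/2}\na p_H}_{L^2(S)}$. This is exactly the implicit step in the paper's chain $\norm{\na v_H}_{L^2(S)}\ls\norm{\na\Pi_Kv_H}_{L^2(S)}\ls\norm{\na\Pi_Kv_H}_{L^2(K)}\ls\norm{\na v_H}_{L^2(K)}$; with it, your argument closes without invoking Lemma~\ref{L:8.8} anywhere except on $K$ itself.
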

\begin{proof}
Assume that $v_H=c_i\bar{\psi}_i^K$. By the definition of $\bar{\psi}_i^K$, we can extend the $v_H$ to the macro element $S\supset K$ as following:
 ${v_H}=c_ic_{ij}^K\psi_j^S$, and  ${\Pi_K v_H}=c_ic_{ij}^K\varphi_j^S$, where $c_{ij}^K, \psi_j^S$, and $\varphi_j^S$ are defined in Section~\ref{sec-2}( see \eqref{overbase}--\eqref{coefcij2} ).  It is easy to verify that ${v_H}$ satisfies
\begin{equation*}
\left\{\begin{aligned}
-\nabla\cdot(\mathbf{a}^\ep(x)\nabla
{v_H}(x)) & = 0,&&
  x\in S, \\
  {v_H}&={\Pi_K v_H},&&   x\in\partial S.
\end{aligned}\right.
\end{equation*}
Hence, from Lemma~\ref{interiorestimate} and $\text{dist} (K,\partial S)\gtrsim H_K$, it follows that
 \begin{equation*}
|v_H|_{H^2(K)}\ls \left({\ep}^{-1}+{H}^{-1}\right)\norm{\na v_H}_{L^2(S)},
\end{equation*}
which yields
 \begin{equation*}
|{v_H}|_{H^2(K)}\ls \left({\ep}^{-1}+{H}^{-1}\right)\norm{\na \Pi_K v_H}_{L^2(S)}\ls (\ep^{-1}+H^{-1})\norm{\na {\Pi_K v_H}}_{L^2(K)}.
\end{equation*}
Therefore, from Lemma~\ref{L:8.8}, it follows the result \eqref{inverse1est} immediately.
\end{proof}

The following lemma gives the continuity and coercivity of the
bilinear form $A_\beta(\cdot,\cdot)$ for the FE-MsFEM.
 \begin{lemma}\label{lbil} We have
 \begin{align}\label{econt}
    \abs{A_\beta(v,w)}\le 2\normc{v}\normc{w} \quad\forall\, v,w\in V_{h,H}.
 \end{align}
For any $0<\ga_1\lesssim 1$, there exists a constant $\al_0$ independent of $h$, $H$, $\ep$, and the penalty parameters such that, if $\ga_0\ge\al_0\big/\ga_1$, then
\begin{equation}\label{ecoer}
     A_\beta(v_{h,H},v_{h,H})\ge \frac12\normc{v_{h,H}}^2 \quad\forall\, v_{h,H}\in V_{h,H}.
\end{equation}
\end{lemma}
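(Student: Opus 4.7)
The plan is to establish continuity and coercivity separately, with coercivity hinging on a new edge-to-element estimate tailored to the oversampling basis. Continuity \eqref{econt} will follow from Cauchy--Schwarz applied term by term, pairing each piece of $A_\be(\cdot,\cdot)$ with matching pieces of the broken norm: the volume integral pairs with $\norme{\cdot}^2$; each flux--jump term is handled by a weighted Cauchy--Schwarz with weights $\sqrt{\rho/\ga_0}$ and $\sqrt{\ga_0/\rho}$; and $J_0,J_1$ fit directly into the corresponding components of $\normc{\cdot}^2$. Summing the five contributions and keeping track of constants gives the factor $2$ on the right of \eqref{econt}.

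For coercivity with $\be=1$, the expansion
\[
A_1(v_{h,H},v_{h,H}) = \norme{v_{h,H}}^2 - 2\sum_{e\in\Ga_h} \int_e \av{\mathbf{a}^\ep\na v_{h,H}\cdot\bn}\jm{v_{h,H}} + J_0(v_{h,H},v_{h,H}) + J_1(v_{h,H},v_{h,H})
\]
shows that only the cross term is non-positive. The crux is the following edge-to-element estimate, valid for every $K\in K_{\Ga_H}$ and every $v_H\in \text{OMS}(K)$:
\[
\norml{\mathbf{a}^\ep\na v_H\cdot\bn}{\pa K}^2 \ls \ep^{-1}\norml{(\mathbf{a}^\ep)^{1/2}\na v_H}{K}^2.
\]
I would derive this from the multiplicative (sharp) trace inequality $\norml{g}{\pa K}^2 \ls H^{-1}\norml{g}{K}^2 + \norml{g}{K}\norml{\na g}{K}$ applied with $g = \mathbf{a}^\ep\na v_H\cdot\bn$. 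Writing $\na g = (\na \mathbf{a}^\ep)\cdot\na v_H + \mathbf{a}^\ep D^2 v_H$, using $\|\na\mathbf{a}^\ep\|_{L^\infty}\ls\ep^{-1}$ (by $\ep$-periodicity of $\mathbf{a}$) and the inverse estimate $|v_H|_{H^2(K)}\ls \ep^{-1}|v_H|_{H^1(K)}$ from Lemma~\ref{inverse1} yields $\norml{\na g}{K}\ls \ep^{-1}\norml{\na v_H}{K}$. The choice $\rho=\ep$ is then what converts the $\ep^{-1}$ factor into a clean $1/\ga_0$ in the penalty-weighted sum.

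Next, using the identity $\av{w} = w|_{\Om_2} + \tfrac12\jm{w}$ together with the above estimate, I would bound
\[
\sum_{e\in\Ga_h}\frac{\rho}{\ga_0}\norml{\av{\mathbf{a}^\ep\na v_{h,H}\cdot\bn}}{e}^2 \le \frac{C_1}{\ga_0}\norme{v_{h,H}}^2 + \frac{C_2}{\ga_0\ga_1}J_1(v_{h,H},v_{h,H}),
\]
where $C_1,C_2$ are independent of $h,H,\ep,\ga_0,\ga_1$. The cross term is treated by Cauchy--Schwarz and a weighted Young's inequality chosen so that the jump contribution is absorbed by $\tfrac12 J_0$:
\[
2\Bigl|\sum_{e\in\Ga_h}\int_e\av{\mathbf{a}^\ep\na v_{h,H}\cdot\bn}\jm{v_{h,H}}\Bigr| \le 2\sum_{e\in\Ga_h}\frac{\rho}{\ga_0}\norml{\av{\mathbf{a}^\ep\na v_{h,H}\cdot\bn}}{e}^2 + \frac12 J_0(v_{h,H},v_{h,H}).
\]
Substituting the previous bound and rearranging produces
\[
A_1(v_{h,H},v_{h,H})\ge\Bigl(1-\frac{2C_1}{\ga_0}\Bigr)\norme{v_{h,H}}^2 + \frac12 J_0(v_{h,H},v_{h,H}) + \Bigl(1-\frac{2C_2}{\ga_0\ga_1}\Bigr)J_1(v_{h,H},v_{h,H}).
\]
Invoking the edge-to-element estimate one more time reproduces the missing $\tfrac12\sum_e\frac{\rho}{\ga_0}\norml{\av{\cdot}}{e}^2$ piece of $\tfrac12\normc{v_{h,H}}^2$ at the cost of an additional $O(1/\ga_0)$ fraction of $\norme{v_{h,H}}^2$ and $O(1/(\ga_0\ga_1))$ fraction of $J_1$. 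Choosing $\al_0$ large enough so that $\ga_0\ge\al_0/\ga_1$ drives all these fractions below $\tfrac14$ (using $\ga_1\ls 1$ to ensure $\ga_0\ge\al_0$ as well), which yields \eqref{ecoer}.

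The main technical obstacle is the oversampling-side edge-to-element bound. The naive scaled trace inequality $\norml{g}{\pa K}^2\ls H^{-1}\norml{g}{K}^2 + H\norml{\na g}{K}^2$ would yield a bound proportional to $H\ep^{-2}$ because $H>\ep$, and the corresponding penalty contribution would scale as $H/(\ga_0\ep)$ instead of $1/\ga_0$. The multiplicative form of the trace inequality together with Lemma~\ref{inverse1} is precisely what avoids this loss and makes $\al_0$ independent of the three small parameters $h$, $H$, $\ep$.
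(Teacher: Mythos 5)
Your argument is correct and follows essentially the same route as the paper's proof: Cauchy--Schwarz term by term for continuity, and for coercivity the same expansion of $A_1(v,v)$, the same splitting $\av{\mathbf{a}^\ep\na v_{h,H}\cdot\bn}=(\mathbf{a}^\ep\na v_H)\cdot\bn+\tfrac12\jm{\mathbf{a}^\ep\na v_{h,H}\cdot\bn}$, the same multiplicative trace inequality combined with the inverse estimate of Lemma~\ref{inverse1} to get the $\ep^{-1}$ edge-to-element bound on the $\Om_2$-side flux, and the same weighted Young's inequality leading to the condition $\ga_0\ga_1\ge\al_0$. The only differences are cosmetic bookkeeping (the paper expands directly against $\normc{\cdot}^2$ rather than adding the average term back at the end) and that you are slightly more explicit than the paper about the $\na\mathbf{a}^\ep$ contribution when differentiating $\mathbf{a}^\ep\na v_H$.
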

\begin{proof}
  \eqref{econt} is a  direct consequence of the definitions \eqref{eah}--\eqref{eJ1}, \eqref{enorma}, \eqref{enormb}, and the Cauchy-Schwarz inequality.

  It remains to prove \eqref{ecoer}. We have,
  \begin{align*}
     &A_\beta(v_{h,H},v_{h,H})=\sum_{K\in {\cal M}_{h,H}}\norml{(\mathbf{a}^\ep)^{1/2}\na v_{h,H}}{K}^2\hskip-2pt-2\sum_{e\in\Ga_h}\int_e \av{\mathbf{a}^\ep\na v_{h,H}\cdot\bn}\jm{v_{h,H}}\\
    &\hskip 70pt+\sum_{e\in\Ga_h}\left( \frac{\ga_0 }{\ep}\norml{\jm{v_{h,H}}}{e}^2+\ga_1\ep\norml{\jm{\mathbf{a}^\ep\na v_{h,H}\cdot\bn}}{e}^2 \right)\\
&\quad=\normc{v_{h,H}}^2-\sum_{e\in\Ga_h} \frac{\ep}{\ga_0}\norml{\av{\mathbf{a}^\ep\na v_{h,H}\cdot\bn}}{e}^2 -2\sum_{e\in\Ga_h}\int_e \av{\mathbf{a}^\ep\na v_{h,H}\cdot
\bn}\jm{v_{h,H}}.
  \end{align*}
  It is obvious that,
  \begin{align*}
    2\sum_{e\in\Ga_h}&\int_e \av{\mathbf{a}^\ep\na v_{h,H}\cdot\bn}\jm{v_{h,H}}\le 2\sum_{e\in\Ga_h}\norml{\av{\mathbf{a}^\ep\na v_{h,H}\cdot\bn}}{e}\norml{\jm{v_{h,H}}}{e}\\
    &\le \sum_{e\in\Ga_h} \frac{\ga_0 }{2 \ep}\norml{\jm{v_{h,H}}}{e}^2+\sum_{e\in\Ga_h}\frac{2 \ep}{\ga _0 }\norml{\av{\mathbf{a}^\ep\na v_{h,H}\cdot\bn}}{e}^2.\nn
  \end{align*}
  Therefore,
  \begin{align*}\label{elbil1}
     A_\beta(v_{h,H},v_{h,H})\ge&\normc{v_{h,H}}^2-\hskip -2pt\sum_{e\in\Ga_h} \frac{\ga_0 }{ 2\ep}\norml{\jm{v_{h,H}}}{e}^2-\hskip -2pt\sum_{e\in\Ga_h}\frac{3 \ep}{\ga _0 }\norml{\av{\mathbf{a}^\ep\na v_{h,H}\cdot\bn}}{e}^2.\nn
  \end{align*}
It is clear that, for any $e\in\Ga_h$,
 \begin{equation*}
    \av{\mathbf{a}^\ep\na v_{h,H}\cdot\bn}\Big|_{e}=(\mathbf{a}^\ep\na v_H)\cdot\bn+\frac{1}{2}\jm{\mathbf{a}^\ep\na v_{h,H}\cdot\bn}\Big|_{e}.
 \end{equation*}
 We have
   \begin{align}\label{elbil1}
     A_\beta(v_{h,H},v_{h,H})\ge&\normc{v_{h,H}}^2-\sum_{e\in\Ga_h} \frac{\ga_0 }{ 2\ep}\norml{\jm{v_{h,H}}}{e}^2\\
     &-\frac{6 \ep}{\ga _0 }\sum_{e\in\Ga_h}\Big(\norml{(\mathbf{a}^\ep\na v_H)\cdot\bn}{e}^2+\frac14\norml{\jm{\mathbf{a}^\ep\na v_{h,H}\cdot\bn}}{e}^2\Big).\nn
  \end{align}
By the trace inequality, the inverse estimate \eqref{inverse1est},  and $\ep<H$, we have
  \begin{align}\label{elbil5}
    &\sum_{e\in E}\norml{(\mathbf{a}^\ep\na v_H)\cdot\bn}{e}^2=\norml{(\mathbf{a}^\ep\na v_H)\cdot\bn}{E}^2 \\
    &\qquad\qquad\quad\leq C\left(\frac{1}{H_E}\norml{\na v_H}{K_E}^2+\norml{\na v_H}{K_E}\norml{\na^2 v_H}{K_E}\right)\nn\\
    &\qquad\qquad\quad\leq \frac{C_1}{\ep}\norm{(\mathbf{a}^\ep)^{1/2}\na v_H}_{L^2(K_E)}^2,\nn
  \end{align}
  where $K_E\in\M_H$ is the element containing $E$.
Therefore, from \eqref{elbil1} and \eqref{elbil5}, it follows that
   \begin{align*}
     A_\beta(v_{h,H},v_{h,H})\ge&\normc{v_{h,H}}^2-\sum_{e\in\Ga_h} \frac{\ga_0 }{ 2\ep}\norml{\jm{v_{h,H}}}{e}^2\\
     &- \frac{6C_1}{\ga_0}\norm{v_H}_{1,H}^2-\frac{3}{2\ga_0\ga_1 }\sum_{e\in\Ga_h}\ga_1\ep \norml{\jm{\mathbf{a}^\ep\na v_{h,H}\cdot\bn}}{e}^2\\
     \ge&\normc{v_{h,H}}^2-\max\left(\frac12,\frac{6C_1\ga_1}{\ga_0\ga_1}, \frac{3}{2\ga_0\ga_1 }\right)\normc{v_{h,H}}^2.
  \end{align*}
Noting that $\ga_1\lesssim 1$, there exists a constant $\al_0>0$ independent of $h$, $H$, $\ep$ such that if $\ga_0\ga_1\ge\al_0$ then $\max\left(\frac{6C_1\ga_1}{\ga_0\ga_1}, \frac{3}{2\ga_0\ga_1 }\right)\le\frac12$.
This completes the proof of the Lemma.
\end{proof}

The following lemma is an analogue of the Strang's lemma for nonconforming finite element methods.
 \begin{lemma}\label{lcea}
 There exists a constant $\al_{0}$ independent of $\ep$, $h$, $H$, and the penalty parameters such that for $0<\ga_1\lesssim 1$, $\ga_0\ge\al_0/\ga_1$, the following error estimate holds:
 \begin{equation}\label{noconforming}
 \begin{split}
 &\normc{u_\ep-u_{h,H}}\\
 &\quad\lesssim \ \inf_{v_{h,H}\in
 V_{h,H}}\normc{u_\ep-v_{h,H}}+\sup_{w_{h,H}\in
 V_{h,H}}\frac{\abs{\int_\Om f
 w_{h,H}\rd x-A_\beta(u_\ep,w_{h,H})}}{\normc{w_{h,H}}}.
 \end{split}
 \end{equation}
\end{lemma}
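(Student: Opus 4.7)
The plan is to run a standard Strang-type ``second lemma'' argument, combining coercivity on $V_{h,H}$ (Lemma~\ref{lbil}) with the continuity bound and the Galerkin definition of $u_{h,H}$. First I would pick an arbitrary $v_{h,H}\in V_{h,H}$ and split the error as $u_\ep-u_{h,H}=(u_\ep-v_{h,H})+(v_{h,H}-u_{h,H})$. The triangle inequality for $\normc{\cdot}$ immediately reduces the task to controlling the discrete difference $\chi:=v_{h,H}-u_{h,H}\in V_{h,H}$, since the first piece is already of the form appearing in the infimum.

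Next I would apply coercivity (under the hypothesis $\ga_0\ga_1\ge\al_0$) to obtain
\begin{equation*}
\tfrac12\normc{\chi}^2\le A_\be(\chi,\chi)=A_\be(v_{h,H}-u_\ep,\chi)+\bigl[A_\be(u_\ep,\chi)-F(\chi)\bigr],
\end{equation*}
where the bracketed consistency residual arises because $u_{h,H}$ satisfies $A_\be(u_{h,H},\chi)=F(\chi)$ by \eqref{eipfem}, while in general $A_\be(u_\ep,\chi)\neq F(\chi)$ since $\chi\not\in V$. The first term on the right is bounded by $2\normc{v_{h,H}-u_\ep}\normc{\chi}$ using the continuity estimate \eqref{econt}. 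The second term is trivially dominated by $\normc{\chi}\cdot\sup_{w_{h,H}\in V_{h,H}}\frac{|F(w_{h,H})-A_\be(u_\ep,w_{h,H})|}{\normc{w_{h,H}}}$. Dividing through by $\normc{\chi}$ and then invoking the triangle inequality yields
\begin{equation*}
\normc{u_\ep-u_{h,H}}\le 5\normc{u_\ep-v_{h,H}}+2\sup_{w_{h,H}\in V_{h,H}}\frac{|F(w_{h,H})-A_\be(u_\ep,w_{h,H})|}{\normc{w_{h,H}}},
\end{equation*}
and taking the infimum over $v_{h,H}\in V_{h,H}$ delivers \eqref{noconforming}.

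The only nonroutine point is that the continuity bound \eqref{econt} was stated for pairs in $V_{h,H}\times V_{h,H}$, whereas the argument above requires it for the pair $(v_{h,H}-u_\ep,\chi)$, where the first entry mixes an element of $V$ with one of $V_{h,H}$. I would address this by observing that $\normc{\cdot}$ is defined through broken integrals over $\M_{h,H}$ together with jump/average terms on $\Ga_h$, all of which make sense on the sum $V+V_{h,H}$ (here we use the regularity $u_\ep|_{\Om_i}\in H^s(\Om)$, $s>3/2$, to give meaning to the face integrals $\av{\mathbf{a}^\ep\na u_\ep\cdot\bn}$ and $\jm{\mathbf{a}^\ep\na u_\ep\cdot\bn}$ on $\Ga_h$). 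Once this is noted, the Cauchy--Schwarz proof of \eqref{econt} applies verbatim on $V+V_{h,H}$, which is the only part of the argument that requires any care; the rest is a mechanical assembly of coercivity, Galerkin orthogonality, and the triangle inequality.
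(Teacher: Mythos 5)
Your proposal is correct and follows essentially the same route as the paper: coercivity applied to the discrete difference $v_{h,H}-u_{h,H}$, the Galerkin relation \eqref{eipfem} to introduce the consistency residual, continuity to absorb $A_\be(u_\ep-v_{h,H},\cdot)$, and the triangle inequality. Your closing remark that the continuity bound must be extended from $V_{h,H}\times V_{h,H}$ to $(V+V_{h,H})\times V_{h,H}$ is a point the paper uses implicitly without comment, and your justification of it is sound.
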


 \begin{proof} For any $v_{h,H}\in V_{h,H}$, from Lemma~\ref{lbil},  \eqref{ecoer}, \eqref{eipfem}, and \eqref{econt}, we have,
  for $0<\ga_1\lesssim 1$ and $\ga_0\ge\al_0/\ga_1$,
  \begin{align*}
 &\normc{u_{h,H}-v_{h,H}}^2\lesssim A_\beta(u_{h,H}-v_{h,H},u_{h,H}-v_{h,H})\\
 &\quad= (f,u_{h,H}-v_{h,H})-A_\beta(v_{h,H},u_{h,H}-v_{h,H})\\
 &\quad=(f,u_{h,H}-v_{h,H})-A_\beta(u_\ep,u_{h,H}-v_{h,H})+A_\beta(u_\ep-v_{h,H},u_{h,H}-v_{h,H})\\
 &\quad\lesssim (f,u_{h,H}-v_{h,H})-A_\beta(u_\ep,u_{h,H}-v_{h,H})+\normc{u_\ep-v_{h,H}}\normc{u_{h,H}-v_{h,H}}.
 \end{align*}
 Hence
  \begin{align*}
 &\normc{u_\ep-u_{h,H}}\leq \normc{u_\ep-v_{h,H}}+\normc{u_{h,H}-v_{h,H}}\\
& \qquad\lesssim \ \normc{u_\ep-v_{h,H}}+\frac{\abs{(f,u_{h,H}-v_{h,H})-A_\beta(u_\ep,u_{h,H}-v_{h,H})}}{\normc{u_{h,H}-v_{h,H}}}
\end{align*}
which yields the error estimate \eqref{noconforming}. This completes the proof.
 \end{proof}

Now, we are ready to present the main result of the paper which gives the error estimate in the  norm $\normc{\cdot}$ for the FE-MsFEM.
\begin{theorem}\label{energeerror} Assume that
the penalty parameter  $0<\ga_1\lesssim 1$ and $\ga_0\ge \al_0/\ga_1$. Then the following error estimate holds:
\begin{align*}
\normc{u_\eps-u_{h,H}} & \lesssim \left( \sqrt{\ep}+\frac{\ep}{H}+\frac{h}{\ep}\abs{\Om_1}^{1/2}\right)\abs{u_0}_{W^{1,\infty}(\Om)}+ H\norm{f}_{L^2(\Om)}\\
 &\quad+\frac{H^{2}}{\sqrt{\ep}}\frac{\abs{u_0}_{H^2(\Om_{\Ga})}}{\sqrt{\abs{\Om_{\Ga}}}}+ \ep^2\norm{\na f}_{L^2(\Om_\Ga)},
\end{align*}
where $\Om_\Ga$   is defined in \eqref{eog}.
\end{theorem}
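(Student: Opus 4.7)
The plan is to apply the nonconforming Strang-type Lemma~\ref{lcea}. The first observation is that the \emph{consistency defect vanishes}: since $u_\ep\in H_0^1(\Om)$ and $-\na\cdot(\mathbf{a}^\ep\na u_\ep)=f$ distributionally, we have both $[u_\ep]=0$ and $[\mathbf{a}^\ep\na u_\ep\cdot\bn]=0$ on $\Ga$. Hence $J_0$, $J_1$, and the $\be$-symmetrization term all vanish when tested against $u_\ep$, and elementwise integration by parts together with the continuity of the normal flux gives
\begin{equation*}
A_\be(u_\ep,w_{h,H})=\sum_{K\in\M_{h,H}}\int_K \mathbf{a}^\ep\na u_\ep\cdot\na w_{h,H}-\sum_{e\in\Ga_h}\int_e \av{\mathbf{a}^\ep\na u_\ep\cdot\bn}\jm{w_{h,H}}=F(w_{h,H})
\end{equation*}
for every $w_{h,H}\in V_{h,H}$. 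So Lemma~\ref{lcea} reduces the task to bounding $\inf_{v_{h,H}}\normc{u_\ep-v_{h,H}}$.

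I take $v_{h,H}\in V_{h,H}$ with $v_{h,H}|_{\Om_2}=\psi_H$ as supplied by Lemma~\ref{multiapp1}/Lemma~\ref{multiapp}, and $v_{h,H}|_{\Om_1}=\hat u_h=Z_h u_1$ as in Lemma~\ref{lapp2}, corrected near $\pa\Om\cap\pa\Om_1$ by an $O(\ep)|u_0|_{W^{1,\infty}(\Om)}$ term so that $\hat u_h\in W_h^0$ (recall $u_1|_{\pa\Om}=\ep\chi^j\pa_{x_j}u_0$). The energy-norm part $\norme{u_\ep-v_{h,H}}$ then splits over $\Om_1$ and $\Om_2$; combining Lemma~\ref{multiapp1}, Lemma~\ref{lapp2}, the triangle inequality, and the regularity bound \eqref{regularityh2}, it produces exactly the $\sqrt{\ep}|u_0|_{W^{1,\infty}(\Om)}$, $\frac{\ep}{H}|u_0|_{W^{1,\infty}(\Om)}$, $\frac{h}{\ep}|\Om_1|^{1/2}|u_0|_{W^{1,\infty}(\Om)}$, and $H\|f\|_{L^2(\Om)}$ contributions of the stated bound.

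For the three penalty-weighted pieces of $\normc{u_\ep-v_{h,H}}^2$ I again use $[u_\ep]=0$ and $[\mathbf{a}^\ep\na u_\ep\cdot\bn]=0$ to rewrite the jumps on $\Ga$ as
$\jm{u_\ep-v_{h,H}}=(\psi_H-u_1)+(u_1-\hat u_h)$, and analogously for $\jm{\mathbf{a}^\ep\na(u_\ep-v_{h,H})\cdot\bn}$ and $\av{\mathbf{a}^\ep\na(u_\ep-v_{h,H})\cdot\bn}$, inserting $u_1$ (or $u_\ep-u_1$) on both sides. On each $e\in\Ga_h$ I apply the trace inequality $\|v\|_{L^2(e)}^2\ls h_K^{-1}\|v\|_{L^2(K)}^2+h_K|v|_{H^1(K)}^2$ to each of the four pieces, using $K\in\M_H$ for the $\psi_H$-contributions (so that Lemma~\ref{multiapp} supplies the $L^2$-, $H^1$- and $H^2$-bounds on $u_1-\psi_H$ over $\Om_{\Ga_H}$), and $K\in\M_h$ or a sub-strip of width $\ep$ for the $\hat u_h$- and $(u_\ep-u_1)$-contributions (via Lemma~\ref{lapp}, Lemma~\ref{uelocalh2}, and Theorem~\ref{h2homoerror}). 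Multiplying by the weights $\ga_0/\rho=\ga_0/\ep$, $\ga_1\rho=\ga_1\ep$, $\rho/\ga_0=\ep/\ga_0$, the lower-order terms are absorbed; the leading contribution $\frac{H^3}{\ep}|u_0|_{H^2(\Om_\Ga)}^2=\bigl(\frac{H^2}{\sqrt{\ep}}|u_0|_{H^2(\Om_\Ga)}/\sqrt{|\Om_\Ga|}\bigr)^2$ (using $|\Om_\Ga|\eqsim H$) comes from $H\cdot\frac{H^2}{\ep^2}|u_0|_{H^2(\Om_\Ga)}^2$ in \eqref{multiapph4} weighted by $\ga_1\ep$, and the $\ep^2\|\na f\|_{L^2(\Om_\Ga)}$ term from the corresponding $\ep\|\na f\|$ summand in that estimate.

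The main obstacle is the flux-jump penalty $J_1$: controlling $\|\mathbf{a}^\ep\na(u_\ep-v_{h,H})\cdot\bn\|_{L^2(\Ga)}^2$ requires $H^2$-regularity of both $u_\ep-u_1$ and $u_1-\psi_H$ near $\Ga$, and the trace scale ($h$, $\ep$, or $H$) must be paired carefully with the available $H^2$-bound so that the resulting powers of $\ep$ and $H$ are tight enough to reproduce the stated rate. This is the sole step where the interior $H^2$-estimates of Theorem~\ref{h2homoerror} and Lemma~\ref{uelocalh2} are genuinely used. A further minor subtlety is the reconciliation of $\hat u_h\in W_h^0$ with $u_1|_{\pa\Om}\neq 0$, which is handled by the $O(\ep)|u_0|_{W^{1,\infty}(\Om)}$ boundary correction mentioned above and contributes only to lower-order terms.
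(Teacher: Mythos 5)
Your reduction of the problem to the pure approximation error rests on a false premise: the consistency defect does \emph{not} vanish for this method. The identity $A_\be(u_\ep,v)=F(v)$ in \eqref{evp} holds only for $v\in V$, and the paper explicitly notes that $V_{h,H}\not\subset V$: the oversampling space $X_H$ is nonconforming, i.e.\ $X_H\not\subset H^1(\Om_2)$ (only $\Pi_H\psi_H$ is required to be continuous, not $\psi_H$ itself). Consequently, when you integrate $(f,w_H)_{\Om_2}=(-\na\cdot(\mathbf{a}^\ep\na u_\ep),w_H)_{\Om_2}$ by parts element by element over $\M_H$, the boundary contributions on the \emph{interior} edges of $\M_H$ do not cancel, and one is left with
\begin{equation*}
(f,w_{h,H})-A_\be(u_\ep,w_{h,H})=-\sum_{E\in\mathcal{E}_H^{\mathrm I}}\int_E \mathbf{a}^\ep\na u_\ep\cdot\bn_E\,[w_H],
\end{equation*}
where $\mathcal{E}_H^{\mathrm I}$ is the set of interior edges/faces of $\M_H$. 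The continuity of $u_\ep$ and of its normal flux, which you invoke, kills the jumps of $u_\ep$ but not the jumps of the test function $w_H$. Bounding this term is the entire second half of the paper's proof: one replaces $[w_H]$ by $[w_H-\Pi_Hw_H]$, splits the result into an element-boundary sum $\mathrm{R}_1$ (estimated by the classical oversampling MsFEM nonconforming-error analysis, using the expansion $w_H=\Pi_Kw_H+\ep\chi^j\pa_{x_j}(\Pi_Kw_H)+\ep\theta_\ep^S$, the maximum principle, Lemma~\ref{L:8.8}, and the Avellaneda--Lin interior estimate) and an interface sum $\mathrm{R}_2$ on $\Ga_H$ (estimated via a trace bound for $\na u_\ep$ on $\Ga$ that again needs Theorems~\ref{homoerror} and \ref{h2homoerror}). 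This step produces, among others, the resonance contribution $\big(\sqrt{\ep}+\frac{\ep}{H}\big)\abs{u_0}_{W^{1,\infty}}$ and an $\ep\norm{f}_{L^2}$ term, and it cannot be dispensed with.

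Your Part~1 (the interpolation error) is essentially the paper's argument: same choice $v_{h,H}|_{\Om_1}=Z_hu_1$, $v_{h,H}|_{\Om_2}=\psi_H$, same insertion of $u_1$, same trace inequalities paired with Lemmas~\ref{multiapp}, \ref{lapp2}, \ref{uelocalh2} and Theorem~\ref{h2homoerror}, and you correctly locate the dominant $\frac{H^3}{\ep}\abs{u_0}_{H^2(\Om_\Ga)}^2$ contribution (the paper obtains it from the $J_0$-weighted jump of $u_1-\psi_H$; your $J_1$ accounting yields a term of the same order, so this is a harmless difference). But as it stands the proposal proves only the first half of the theorem; the nonconforming error term in Lemma~\ref{lcea} is asserted to be zero when it is not, and no mechanism is offered to control it.
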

\begin{remark} (a) The error bound consists of three parts: the first part of order $O\big( \sqrt{\ep}$ $+\frac{\ep}{H}+ H\big)$ from the oversampling MsFE approximation in $\Om_2$, the second part of order $ O\left(\dfrac{h}{\ep}\abs{\Om_1}^{1/2}\right)$ from the FE approximation in $\Om_1$, and the third part $\dfrac{H^{2}}{\sqrt{\ep}}\dfrac{\abs{u_0}_{H^2(\Om_{\Ga})}}{\sqrt{\abs{\Om_{\Ga}}}}+\ep^2\norm{\na f}_{L^2(\Om_\Ga)}$ from the penalizations on $\Ga$.

(b) Suppose that the interface $\Ga$ is chosen such that $\dist(\Ga,\pa\Om)=O(H)$. If the average value
\begin{equation}\label{eau0}
\dfrac{\abs{u_0}_{H^2(\Om_{\Ga})}}{\sqrt{\abs{\Om_{\Ga}}}}\ls 1,
\end{equation}
then we have
$$\normc{u_\eps-u_{h,H}}  \lesssim \sqrt{\ep}+\frac{\ep}{H}+ H + \frac{hH^{1/2}}{\ep}+\frac{H^{2}}{\sqrt{\ep}}.
$$
In this case, we may choose $H\eqsim\sqrt{\eps}$ and $h\eqsim\ep^{5/4}$ to ensure that  $\normc{u_\eps-u_{h,H}} \lesssim \sqrt{\ep}$.
The condition \eqref{eau0} may be checked by using the standard singularity decomposition results for elliptic problems on polygonal domains \cite{Grisvard, Dauge98}. For example, we may show for the two dimensional case (n=2) that,
if the inner angles of the polygon $\Om$ are less than $\frac23\pi$, then \eqref{eau0} holds.
\end{remark}
\begin{proof} According to Lemma~\ref{lcea}, the proof is divided into two parts. The first part is devoted to estimating the interpolation error and the second part to estimating the non-conforming error.

{\it Part 1. Interpolation error estimate}. We set $v_{h,H}$ as $v_{h,H}|_{\Om_1}=\hat u_h, v_{h,H}|_{\Om_2}=\psi_H$, where $\hat u_h:=Z_hu_1$ and $\psi_H$ are defined in Lemma~\ref{lapp2} and Lemma~\ref{multiapp} respectively. We are going to estimate $\normc{u_\eps-v_{h,H}}$, i.e.,  to estimate each term in its definition (cf. \eqref{enormb}). First,  from Lemmas~\ref{multiapp1},~\ref{lapp2}, we have
\begin{align}\label{errornorm}
&\left(\norm{u_\ep-v_{h,H}}_{1,H}^2+ \norml{(\mathbf{a}^\ep)^{1/2}\na (u_\ep-v_{h,H})}{\Om_1}^2\right)^{1/2}\\
&\ls H\abs{u_0}_{H^2(\Om)}+\sqrt{\ep}\abs{u_0}_{W^{1,\infty}(\Om)}  +\frac{\eps}{H}\abs{u_0}_{W^{1,\infty}(\Om_2)}+\frac{h}{\ep}\abs{\Om_1}^{1/2}\abs{u_0}_{W^{1,\infty}(\Om_1)}.\nn
\end{align}
Further, since $\jm{u_\ep}=0$ and $\jm{{u}_1}=0$, it is easy to see that
\[
\begin{split}
\sum_{e\in\Ga_h} \frac{\ga_0 }{\ep}\int_e\jm{u_\ep-v_{h,H}}^2  & = \sum_{e\in\Ga_h} \frac{\ga_0 }{\ep}\int_e\jm{{u}_1-v_{h,H}}^2 \\
&\leq \sum_{E\in\Ga_H} \frac{\ga_0 }{\ep}\int_E(u_1-\psi_H)^2 +\sum_{e\in\Ga_h} \frac{\ga_0 }{\ep}\int_e (u_1-\hat u_h)^2 .
\end{split}
\]
By the trace inequality, we have
\[
\int_E(u_1-\psi_H)^2 \lesssim H^{-1}\norm{u_1-\psi_H}_{L^2(K)}^2 +\norm{u_1-\psi_H}_{L^2(K)}\norm{\na(u_1-\psi_H)}_{L^2(K)}.
\]
Hence, taking a summation over $\Ga_H$, yields
\[
\begin{split}
 \sum_{E\in\Ga_H}& \frac{\ga_0 }{\ep}\int_E(u_1-\psi_H)^2 \lesssim   \frac{1}{H\ep} \sum_{K\in K_{\Ga_H}}\norm{u_1-\psi_H}_{L^2(K)}^2\\
   & + \frac{1}{\ep} \bigg(\sum_{K\in K_{\Ga_H}} \norm{u_1-\psi_H}_{L^2(K)}^2\bigg)^{1/2}\bigg(\sum_{K\in K_{\Ga_H}} \norm{\na(u_1-\psi_H)}_{L^2(K)}^2\bigg)^{1/2}.
\end{split}
\]
Thus, from  from \eqref{maximumtheta}, \eqref{multiapph2}, and \eqref{multiapph3}, it follows that
\[
\begin{split}
 \sum_{E\in\Ga_H} \frac{\ga_0 }{\ep}\int_E(u_\ep-\psi_H)^2 \lesssim & \ep\abs{u_0}_{W^{1,\infty}(\Om_{\Ga_H})}^2+\frac{H^3}{\ep}\abs{u_0}_{H^2(\Om_{\Ga_H})}^2.
\end{split}
\]
Further, by the trace inequality, from Lemma~\ref{lapp}, it follows
\[
\begin{split}
 &\sum_{e\in\Ga_h} \frac{\ga_0 }{\ep}\int_e(u_1-\hat u_h)^2 \\
 &\qquad\lesssim \frac{1}{\ep}\sum_{e\in\Ga_h}\left( h^{-1}\norm{u_1-\hat u_h}_{L^2(K_e)}^2+\norm{u_1-\hat u_h}_{L^2(K_e)}\norm{\na(u_1-\hat u_h)}_{L^2(K_e)}\right)\\
 &\qquad \ls \frac{h^3}{\ep}\abs{u_1}_{H^2(\Om_{\Ga_h})}^2,
 \end{split}
\]
where $\Om_{\Ga_h}$ is defined in \eqref{eogHh}, and, in order to ensure the last inequality, for any vertex of elements in $K_{\Ga_h}$, we have chosen the corresponding edge/face in the definition of Scott-Zhang interpolation to be an edge/face of some element in $K_{\Ga_h}$.
Therefore, from the above two estimates and Lemma~\ref{uelocalh2}, we have
\begin{align}\label{jumperror}
\sum_{e\in\Ga_h} \frac{\ga_0 }{\ep}\int_e\jm{u_\ep-v_{h,H}}^2  \ls&\frac{H^3}{\ep}\abs{u_0}_{H^{2}(\Om_{\Ga})}^2+h^2\abs{u_0}_{H^2(\Om)}^2\\
&+{\ep}\abs{u_0}_{W^{1,\infty}(\Om)}^2+\ep^4\norml{\na f}{\Om_\Ga}^2.\nn
\end{align}

Next, we estimate the term
\[
\sum_{e\in\Ga_h} \frac{\ep}{\ga_0}\norml{\av{\mathbf{a}^\ep\na (u_\ep-v_{h,H})\cdot\bn}}{e}^2.
\]
It is easy to see that
\[
\begin{split}
\sum_{e\in\Ga_h} \frac{\ep}{\ga_0}\norml{\av{\mathbf{a}^\ep\na (u_\ep-v_{h,H})\cdot\bn}}{e}^2\lesssim &\sum_{E\in\Ga_H} \frac{\ep}{\ga_0}\norml{\mathbf{a}^\ep\na (u_\ep-\psi_H)\cdot\bn}{E}^2\\
&+\sum_{e\in\Ga_h} \frac{\ep}{\ga_0}\norml{\mathbf{a}^\ep\na (u_\ep-\hat u_h)\cdot\bn}{e}^2.
\end{split}
\]
By the trace inequality, we have
\[
\begin{split}
&\norml{\mathbf{a}^\ep\na (u_\ep-\psi_H)\cdot\bn}{E}^2\ls \norml{\na (u_\ep-u_1)\cdot\bn}{E}^2+\norml{\na (u_1-\psi_H)\cdot\bn}{E}^2\\
&\quad\leq H^{-1}\norml{\na (u_\ep-u_1)}{K}^2+\norml{\na (u_\ep-u_1)}{K}\norml{\na^2 (u_\ep-u_1)}{K}\\
&\qquad +H^{-1}\norml{\na (u_1-\psi_H)}{K}^2+\norml{\na (u_1-\psi_H)}{K}\norml{\na^2 (u_1-\psi_H)}{K}.
\end{split}
\]
Hence, a summation over $\Ga_H$ follows that
\[
\begin{split}
&\sum_{E\in\Ga_H} \frac{\ep}{\ga_0}\norml{\mathbf{a}^\ep\na (u_\ep-\psi_H)\cdot\bn}{E}^2\\
&\qquad\lesssim \ep H^{-1}\norml{\na (u_\ep-u_1)}{\Om_{\Ga_H}}^2+\ep H^{-1}\sum_{K\in K_{\Ga_H}} \norml{\na (u_1-\psi_H)}{K}^2\\
 &\qquad\quad+{\ep}\norm{\na (u_\ep-u_1)}_{L^2(\Om_{\Ga_H})}\norm{\na^2(u_\ep-u_1)}_{L^2(\Om_{\Ga_H})}\\
   & \qquad\quad+ {\ep} \bigg(\sum_{K\in K_{\Ga_H}} \norm{\na (u_1-\psi_H)}_{L^2(K)}^2 \bigg)^{1/2} \bigg(\sum_{K\in K_{\Ga_H}} \norm{\na^2(u_1-\psi_H)}_{L^2(K)}^2 \bigg)^{1/2}.
\end{split}
\]
Therefore, it follows that  from Theorems~\ref{homoerror}, \ref{h2homoerror} and Lemma~\ref{multiapp},
\begin{align}\label{intererr1}
\sum_{E\in\Ga_H} \frac{\ep}{\ga_0}\norml{\mathbf{a}^\ep\na (u_\ep-\psi_{H})\cdot\bn}{e}^2\lesssim & (H+\ep)^2\abs{u_0}_{H^2(\Om)}^2\\
&+\ep\abs{u_0}_{W^{1,\infty}(\Om)}^2+\ep^4\norm{\na f}_{L^2(\Om_\Ga)}^2.\nn
\end{align}
Similarly, by the trace inequality, we have
\[
\begin{split}
&\sum_{e\in\Ga_h} \frac{\ep}{\ga_0}\norml{\mathbf{a}^\ep\na (u_\ep-\hat u_h)\cdot\bn}{e}^2\\
&\quad\ls \ep\sum_{E\in\Ga_H}\norml{\mathbf{a}^\ep\na (u_\ep-u_1)\cdot\bn}{E}^2+\ep\sum_{e\in\Ga_h}\norml{\mathbf{a}^\ep\na (u_1-\hat u_h)\cdot\bn}{e}^2\\
&\quad\ls \ep H^{-1}\norml{\na (u_\ep-u_1)}{\Om_{\Ga_H}}^2+\ep\norml{\na (u_\ep-u_1)}{\Om_{\Ga_H}}\norml{\na^2 (u_\ep-u_1)}{\Om_{\Ga_H}}\\
&\qquad+ \ep h^{-1}\norml{\na (u_1-\hat u_h)}{\Om_{\Ga_h}}^2+\ep\norml{\na (u_1-\hat u_h)}{\Om_{\Ga_h}}\norml{\na^2 u_1}{\Om_{\Ga_h}}.
\end{split}
\]
Thus, from Theorems~\ref{homoerror},\ref{h2homoerror} and Lemmas~\ref{uelocalh2}, \ref{lapp2}, it follows that
\begin{align}\label{intererr2}
&\sum_{e\in\Ga_h} \frac{\ep}{\ga_0}\norml{\mathbf{a}^\ep\na (u_\ep-\hat u_h)\cdot\bn}{e}^2 \\
 &\qquad\ls \ep^2\abs{u_0}_{H^2(\Om)}^2+\ep\abs{u_0}_{W^{1,\infty}(\Om)}^2+\ep^4\norm{\na f}_{L^2(\Om_\Ga)}^2.\nn
\end{align}

It is obvious that a same argument as above can be used to get the same error bound for the term
\[
\sum_{e\in\Ga_h}{\ga_1} {\ep}\norml{\jm{\mathbf{a}^\ep\na (u_\ep-v_{h,H})\cdot\bn}}{e}^2. \]
Thus, it follows from \eqref{errornorm}--\eqref{intererr2}  that
\begin{align}\label{comformerror}
\inf_{v_{h,H}\in
 V_{h,H}}&\normc{u_\ep-v_{h,H}}\lesssim H\abs{u_0}_{H^2(\Om)}+\left( \sqrt{\ep}+\frac{\ep}{H}\right)\abs{u_0}_{W^{1,\infty}(\Om)}\\
 &+\frac{h}{\ep}\abs{\Om_1}^{1/2}\abs{u_0}_{W^{1,\infty}(\Om_1)}+\frac{H^{3/2}}{\sqrt{\ep}}\abs{u_0}_{H^2(\Om_{\Ga})}+ \ep^2\norm{\na f}_{L^2(\Om_\Ga)}.\nn
\end{align}

{\it Part 2. The non-conforming error estimate}.
Define
$$
    \mathcal{E}_{{H}}^{\mathrm{I}}:=  \hbox{set of all interior edges/faces  of ${\cal M}_H$ }.
$$
For any $w_{h,H}\in V_{h,H}$, noticing that $\pa\Om_2/\Ga$ is empty, it is easy to see
\[
\begin{split}
(f,w_{h,H}) &=(f,w_h)_{\Om_1}+(f,w_H)_{\Om_2}=(-\na\cdot(\mathbf{a}^\ep\na u_\ep),w_h) _{\Om_1}+(-\na\cdot(\mathbf{a}^\ep\na u_\ep),w_H)_{\Om_2}\\
&=(\mathbf{a}^\ep\na u_\ep,\na w_h)_{\Om_1}-\int_\Ga \mathbf{a}^\ep\na u_\ep\cdot \mathbf{n} w_h \\
&\qquad\quad +\sum_{K\in {\cal M}_H}\left((\mathbf{a}^\ep\na u_\ep,\na w_H)_K-\int_{\pa K}\mathbf{a}^\ep\na u_\ep\cdot \mathbf{n}_K w_H \right)\\
&=(\mathbf{a}^\ep\na u_\ep,\na w_h)_{\Om_1}-\sum_{e\in\Ga_{h}}\int_e \mathbf{a}^\ep\na u_\ep\cdot \mathbf{n} (w_h-w_H) \\
&\qquad +\sum_{K\in {\cal M}_H}(\mathbf{a}^\ep\na u_\ep,\na w_H)_K-\sum_{E\in  \mathcal{E}_{{H}}^{\mathrm{I}}}\int_E \mathbf{a}^\ep\na u_\ep\cdot \mathbf{n}_E [w_H].
\end{split}
\]
Here the unit normal vector $\mathbf{n}_E$ is oriented from $ K$ to $K'$ and the jump $[v]$ of $v$ on an interior side $E=\partial K\cap\partial K'\in \mathcal{E}_H^{\mathrm{I}}$ is defined as $[v]:=v|_K-v|_{K'}$. Furthermore, by the definition of $A_\be$, it follows that
$$
A_\be(u_\ep,w_{h,H})=(\mathbf{a}^\ep\na u_\ep,\na w_h)_{\Om_1}-\sum_{e\in\Ga_{h}}\int_e \mathbf{a}^\ep\na u_\ep\cdot \mathbf{n} (w_h-w_H) +\sum_{K\in {\cal M}_H}(\mathbf{a}^\ep\na u_\ep,\na w_H)_K.
$$
Thus, we have
\[
\begin{split}
(f,w_{h,H})&-A_\be(u_\ep,w_{h,H})= -\sum_{E\in  \mathcal{E}_{{H}}^{\mathrm{I}}}\int_E \mathbf{a}^\ep\na u_\ep\cdot \mathbf{n}_E \jm{w_H}\\
&= -\sum_{E\in  \mathcal{E}_{{H}}^{\mathrm{I}}}\int_E \mathbf{a}^\ep\na u_\ep\cdot \mathbf{n}_E \jm{w_H-\Pi_H w_H}\\
&=-\sum_{K\in {\cal M}_H}\int_{\pa K}\mathbf{a}^\ep\na u_\ep\cdot \mathbf{n}_K (w_H-\Pi_H w_H) -\sum_{E\in \Ga_{H}}\int_E \mathbf{a}^\ep\na u_\ep\cdot \mathbf{n} (w_H-\Pi_H w_H)\\
&:= \mathrm{R}_1 +\mathrm{R}_2.
\end{split}
\]
Since
$$
\mathrm{R}_1=\int_{\Om_2}f(w_H-\Pi_H w_H) -\sum_{K\in\M_H}\int_K\mathbf{a}^\ep\na
 u_{\eps}\na(w_H-\Pi_Hw_H) ,
$$
we can estimate $\mathrm{R}_1$ by following \cite[Chapter 6]{EH2009}, or  \cite[Chapter 9]{CW2010}, or
the proof presented in \cite[Theorem 3.1]{EHW}, and obtain
\begin{equation}\label{errorR1}
\abs{\mbox{\rm R}_1}\leq
C\Big(\eps\abs{u_0}_{H^2(\Om_2)}+\Big(\sqrt{\ep}+\frac{\eps}{H}\Big)\abs{u_0}_{W^{1,\infty}(\Om_2)}+\ep\norm{f}_{L^2(\Om_2)}\Big)\norm{w_H}_{1,H}.
\end{equation}

Next, we consider the second term $\mathrm{R}_2$.
For any $w_H\in X_H$, it is easy to check that (see \cite{CW2010,EHW})
   \begin{equation*}
       w_H=\Pi_Kw_H+\eps\chi^j\frac{\pa(\Pi_Kw_H)}{\pa x_j}+\eps\theta_\eps^S \quad\mathrm{in}\quad S,
   \end{equation*}
where $\theta_\eps^S\in H^1(S)$ is the boundary corrector given by
   \begin{equation*}
    -\na\cdot(\mathbf{a}^\eps\na\theta_\eps^S)=0\quad \text{ in } S,\qquad \theta_\eps^S\big|_{\pa S}=-\chi^j\frac{\pa(\Pi_Kw_h)}{\pa x_j}.
   \end{equation*}
By the Maximum Principle, we have
\begin{equation*}
\|\theta_\ep^S\|_{L^\infty(S)}\leq C\|\nabla
\Pi_Kw_H\|_{L^\infty(S)}.
\end{equation*}
Thus, from Lemma~\ref{L:8.8}, we obtain
\begin{equation*}
\begin{split}
\|w_H-\Pi_Kw_H\|_{L^\infty(\pa K)}&\ls\ep\|\nabla \Pi_Kw_H\|_{L^\infty(S)}\ls\ep\|\nabla \Pi_Kw_H\|_{L^\infty(K)}\\
&\ls\ep H^{-n/2}\|\nabla \Pi_Kw_H\|_{L^2(K)}\ls\ep H^{-n/2}\|\na w_H\|_{L^2(K)},
\end{split}
\end{equation*}
which yields
\begin{align*}
\abs{\mathrm{R}_2}&\ls \sum_{E\in \Ga_{H}}\norm{\na u_\ep}_{L^{2}(E)}\norm{w_H-\Pi_Hw_H}_{L^2(E)}\\
&\ls \norm{\na u_\ep}_{L^{2}(\Ga)} \bigg(\sum_{E\in \Ga_{H}}H^{n-1}\norm{w_H-\Pi_Hw_H}_{L^\infty(E)}^2 \bigg)^{1/2}\\
&\ls \norm{\na u_\ep}_{L^{2}(\Ga)}\bigg(\sum_{K\in K_{\Ga_H}}\ep^2 H^{-1}\|\na w_H\|_{L^2(K)}^2\bigg)^{1/2}\\
&\ls \ep H^{-1/2}\norm{\na u_\ep}_{L^{2}(\Ga)}\norm{w_H}_{1,H}.
\end{align*}
On the other hand, by use of the trace inequality, it follows from Theorems~\ref{homoerror}, \ref{h2homoerror},  and Lemma~\ref{h3u0} that
\begin{equation*}
\begin{split}
&\norm{\na u_\ep}_{L^2(\Ga)}^2\le \norm{\na (u_\ep-u_1)}_{L^2(\Ga)}^2+\norm{\na u_0}_{L^2(\Ga)}^2+\norm{\na \left(\ep\chi^i\frac{\pa u_0}{\pa x_j}\right)}_{L^2(\Ga)}^2\\
&\quad \ls\sum_{K\in K_{\Ga_H}}\Big(H^{-1} \norm{\na (u_\ep-u_1)}_{L^2(K)}^2+\norm{\na (u_\ep-u_1)}_{L^2(K)}\abs{u_\ep-u_1}_{H^2(K)}\Big)\\
&\qquad +\abs{u_0}_{W^{1,\infty}(\Om)}^2+\ep^2\sum_{K\in K_{\Ga_H}}\Big(H^{-1}\abs{u_0}_{H^2(K)}^2+\abs{u_0}_{H^2(K)}\abs{u_0}_{H^3(K)}\Big)\\
&\quad\ls\ep\abs{u_0}_{H^2(\Om)}^2+\abs{u_0}_{W^{1,\infty}(\Om)}^2+\ep^3\norm{\na f}_{L^2(\Om_\Ga)}^2.
\end{split}
\end{equation*}
Therefore
\begin{equation}\label{errorR2}
\abs{\mathrm{R}_2}\ls\Big(\ep\abs{u_0}_{H^2(\Om)}+\sqrt{\ep}\abs{u_0}_{W^{1,\infty}(\Om)}+\ep^2\norm{\na f}_{L^2(\Om_\Ga)}\Big)\norm{w_H}_{1,H}.
\end{equation}
It follows from \eqref{errorR1} and \eqref{errorR2} that the non-conforming error in Lemma~\ref{lcea}
\begin{align*}
&\sup_{w_{h,H}\in
 V_{h,H}}\frac{\abs{\int_\Om f
 w_{h,H} -A_\beta(u,w_{h,H})}}{\normc{w_{h,H}}}\\
&\qquad\ls \eps\abs{u_0}_{H^2(\Om)}+\Big(\sqrt{\ep}+\frac{\eps}{H}\Big)\abs{u_0}_{W^{1,\infty}(\Om)}+\ep\norm{f}_{L^2(\Om)}+\ep^2\norm{\na f}_{L^2(\Om_\Ga)},
\end{align*}
which, combining  with \eqref{comformerror}, \eqref{noconforming}, and \eqref{regularityh2}, completes the proof.
\end{proof}

\section{Numerical tests}\label{sec-6}
In this section,
we first demonstrate the performance of the proposed FE-MsFEM by solving the model problem \eqref{eproblem} with periodic and randomly generated coefficients respectively, and then show the ability of the FE-MsFEM to solve two multiscale elliptic problems with high-contrast channels. In all computations we do not assume that the diffusion coefficient values are available outside of the research domain.
In order to illustrate the performance of our method, we also implement two other kinds of methods. The first is the standard MsFEM. The second one is a mixed basis MsFEM which use the oversampling multiscale basis inside the domain but away from the boundary, while use the standard MsFEM basis near the boundary. By this way, the mixed basis MsFEM doesn't need to use the outside information.

For the methods FE-MsFEM and mixed basis MsFEM, the triangulation may be done by the following three steps.
\begin{itemize}
 \item First, we triangulate the domain $\Omega$ with a coarse mesh whose mesh size $H$ is much bigger than  $\ep$.
 \item Secondly, we choose the union of coarse-grid elements adjacent to the boundary $\pa\Om$ (and the channels if exist) as $\Om_1$ and denote $\Om\setminus\overline{\Om_1}$ by $\Om_2$. For example, in our tests, we choose two layers of coarse-grid elements (and the coarse-grid elements containing the channels if exist) to form the domain $\Om_1$. Hence the distance of $\Ga$ away from $\pa\Om$ is $2H$.
 \item Finally, in $\Om_2$, we use the oversampling MsFEM basis on coarse-grid elements. While, in $\Om_1$ we use the traditional linear FEM basis on a fine mesh for the FE-MsFEM,  or use the standard MsFEM basis on coarse-grid elements for the mixed basis MsFEM. In our tests, we fix the mesh size of the fine mesh $h=1/1024$ which is small enough to resolve the smallest scale of oscillations.
 \end{itemize}
Please see Fig.~\ref{fig:1} for a sample triangulation.

Since there are no exact solutions to the problems considered here, we will solve them on a very fine mesh with mesh size $h_f=1/2048$ by use of the traditional linear finite element method, and consider their numerical solutions as the ``exact" solutions which are denoted as $u_e$. Denoting by $u_h$ the numerical solutions computed by the methods considered in this section, we measure the relative errors in the $L^2$, $L^\infty$ and energy norms as following
\[
\frac{\norm{u_h-u_e}_{L^2}}{\norm{u_e}_{L^2}},\,\frac{\norm{u_h-u_e}_{L^\infty}}{\norm{u_e}_{L^\infty}},
\,\frac{\norme{u_h-u_e}}{\norme{u_e}}.
\]
In all tests, for simplicity,  the penalty parameters in our FE-MsFEM are chosen as $\ga_0=20$ and $\ga_1=0.1$. The coefficient $\mathbf{a}^\ep$ is chosen as the form $\mathbf{a}^\ep=a^\ep I$ where $a^\ep$ is a scalar function and $I$ is the 2 by 2 identity matrix.

\subsection{Application to elliptic problems with highly oscillating coefficients}

We first consider the model problem (\ref{eproblem}) in the  squared domain
$\Omega=(0,1)\times(0,1)$. Assume that $f=1$ and the coefficient
$\mathbf{a}^\ep(x_1,x_2)$ has
 the following periodic form
\begin{equation}\label{coef1}
a^\ep(x_1,x_2)=\frac{2+1.8\sin(2\pi x_1/\ep)}{2+1.8\cos(2\pi x_2/\ep)}
+\frac{2+1.8\sin(2\pi x_2/\ep)}{2+1.8\sin(2\pi x_1/\ep)}.
\end{equation}
where we fix $\ep=1/100$. In our FE-MsFEM, we consider two choices of the parameter $\rho$. The first choice is $\rho=\eps$ as stated in our theoretical analysis, while the other one $\rho=h$, the size of the fine mesh. The second choice is useful when the scales are non-separable. We  first choose $H=1/32$ and report the relative errors in the $L^2$, $L^\infty$ and energy norms in Table~\ref{Table:1}.
\begin{table}[htp]
\caption{Relative errors in the $L^2$, $L^\infty$ and energy norms for the model problem with periodic coefficient given by \eqref{coef1}. $\ep=1/100$, $H=1/32, h=1/1024$, $\ga_0=20$, $\ga_1=0.1$.  }\label{Table:1}
\begin{center}
\begin{tabular}{|c|c|c|c|} \hline
 {Relative Error}  &  $L^2$ & $L^\infty$ &Energy norm \\\hline
   MsFEM           &  0.7263e-01 & 0.7157e-01 & 0.2560e-00\\\hline
   Mixed basis MsFEM         &  0.3422e-01 & 0.3637e-01 & 0.1714e-00\\\hline
   FE-MsFEM $\rho=\eps$       &  0.1238e-01 & 0.1334e-01 & 0.5159e-01\\\hline
   FE-MsFEM $\rho=h$       &  0.1252e-01 & 0.1344e-01 & 0.4840e-01
\\\hline
\end{tabular}
\end{center}
\end{table}
We can see that the FE-MsFEMs give the most accurate results among the methods considered here. Especially, when we take $\rho=h$, the FE-MsFEM still works well.

The following numerical experiment is to show the coarse mesh size $H$ plays a role as that describing in the Theorem~\ref{energeerror}. We fix $h=1/1024$ and $\ep=1/100$. Three kinds of coarse mesh size are chosen. The first one, $H=1/64$, is denoted as $64\times16$; the second one, $H=1/32$, is denoted as $32\times32$; the last one, $H=1/16$, is denoted as $16\times64$. The results are shown in Table~\ref{Table:2}.
\begin{table}[htp]
\caption{Relative errors of the FE-MsFEM with $H=1/64, 1/32,$ and $1/16$, respectively, for the model problem with periodic coefficient given by \eqref{coef1}. $\rho=\ep=1/100$, $h=1/1024$, $\ga_0=20$, $\ga_1=0.1$.}\label{Table:2}
\begin{center}
\begin{tabular}{|c|c|c|c|} \hline
 {Relative Error}  &  $L^2$ & $L^\infty$ &Energy norm \\\hline
   $64\times16$    &  0.1100e-01 & 0.1502e-01 & 0.9342e-01\\\hline
   $32\times32$    &  0.1186e-01 & 0.1290e-01 & 0.5159e-01\\\hline
   $16\times64$    &  0.1240e-01 & 0.1795e-01 & 0.6593e-01
\\\hline
\end{tabular}
\end{center}
\end{table}
From the table, it is easy to see that as $H$ goes larger, the relative error in energy norm goes lower first and goes higher later, which is coincided with the theoretical results in Theorem~\ref{energeerror}.

Next we simulate the model problem with a random coefficient which is generated by using
the random log-normal permeability field $\mathbf{a}^\ep(x)$ by using the
moving ellipse average technique \cite{dur} with the variance of the
logarithm of the permeability $\sigma^2=1.5$, and the correlation
lengths $l_1=l_2=0.01$ (isotropic heterogeneity) in $x_1$ and $x_2$
directions, respectively. The ratio of maximum to minimum of one
realization of the resulting permeability field in our numerical
experiments is 1.6137e+05.  One realization of the resulting
permeability field in our numerical experiments is depicted in
Fig.~\ref{perms11}.
\begin{figure}[htp]
\centerline{\includegraphics[scale=0.6]{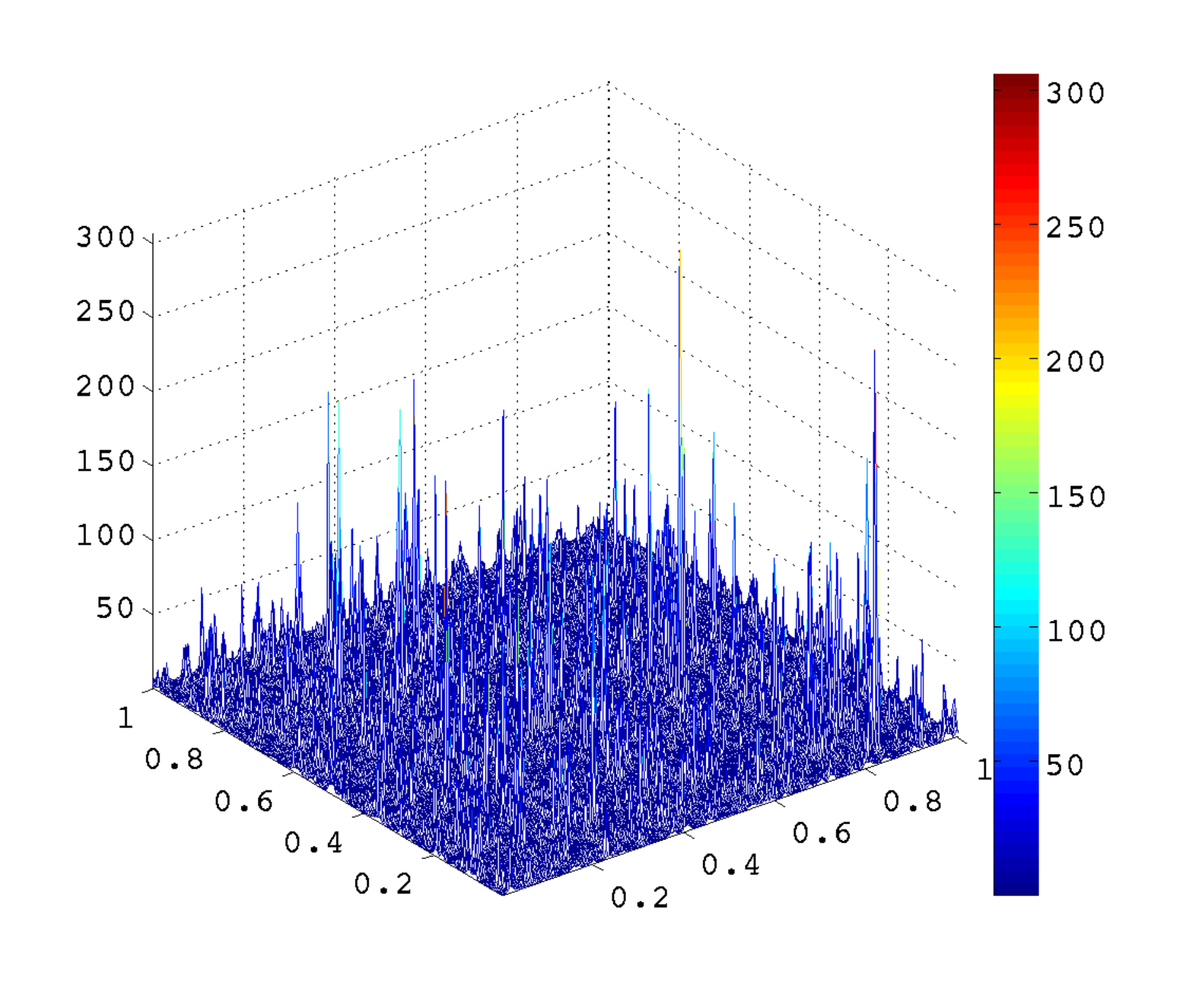}}
\caption{\label{perms11}The random log-normal permeability field
$a^\ep(x)$. The ratio of maximum to minimum is 1.6137e+05.}
\end{figure}
We also compare three kinds of methods including the standard MsFEM, the Mixed basis MsFEM and the FE-MsFEM. In this test, we set $H=1/32$ and $\rho=h$ since there is no explicit $\eps$ in this example. The relative errors for the three methods are listed in Table~\ref{Table:3}. From the table, we can also see that the FE-MsFEM gives the most accurate results among the methods considered here.
\begin{table}[htp]
\caption{Relative errors in the $L^2$, $L^\infty$ and energy norms for the model problem with random coefficient as shown in Figure~\ref{perms11}. $H=1/32, \rho=h=1/1024$, $\ga_0=20$, $\ga_1=0.1$. }\label{Table:3}
\begin{center}
\begin{tabular}{|c|c|c|c|} \hline
 {Relative Error}  &  $L^2$ & $L^\infty$ &Energy norm \\\hline
   MsFEM           &  0.3690e-00 & 0.3731e-00 & 0.6014e-00\\\hline
   Mixed basis MsFEM         &  0.1119e-00 & 0.1857e-00 & 0.4770e-00\\\hline
   FE-MsFEM        &  0.2635e-01 & 0.8351e-01 & 0.2975e-00
\\\hline
\end{tabular}
\end{center}
\end{table}

\subsection{Application to multiscale problems with high-contrast channels}
In this subsection, we use the introduced FE-MsFEM to solve two elliptic multiscale problems which have high-contrast channels inside the domain.

 In the first example, the coefficient $\mathbf{a}^\ep$ is characterized
by a fine and long-ranged high-permeability channel, which is set by the following way.  The example utilizes the periodic coefficient $a^\ep$ in \eqref{coef1} as the background, while changing the values on a narrow and long channel that defined from $(0.08, 0.49)$ to $(0.92, 0.51)$ with new value $a^\ep=10^5$ (See Fig.~\ref{fig:2}).
\begin{figure}[htp]
\centerline{\includegraphics[width=0.5\textwidth]{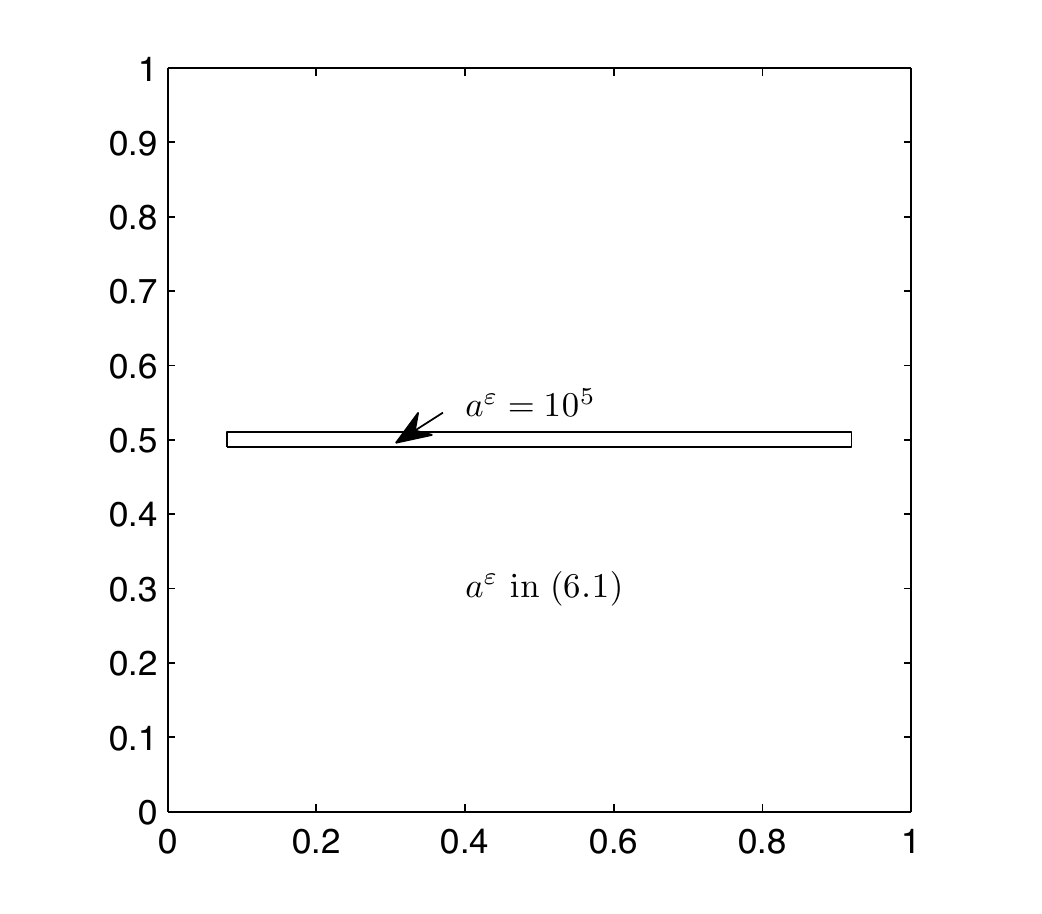}}
\caption{Permeability field}
\label{fig:2}
\end{figure}
For this problem, the ``exact " solution is difficult to be obtained due to the singularities near the corners of the high contrast channel. Our direct numerical simulation shows that the gradient values are large near the corners of the channel.
We set $H=1/32$ and $\rho=\ep=1/100$. The results are presented in Table~\ref{Table:4} where the relative errors in $L^2,L^\infty$ norms as well as energy norm are shown. We observe that the FE-MsFEM  performs better than other methods.
\begin{table}[htp]
\caption{Relative errors for the model problem with the permeability depicted in Fig.\ref{fig:2}. $\rho=\ep=1/100$, $H=1/32$, $h=1/1024$, $\ga_0=20$, $\ga_1=0.1$. }\label{Table:4}
\begin{center}
\begin{tabular}{|c|c|c|c|} \hline
 {Relative Error}  &  $L^2$ & $L^\infty$ &Energy norm \\\hline
   MsFEM    &  0.1640e-00 & 0.2187e-00 & 0.3773e-00\\\hline
   Mixed basis MsFEM    &  0.5415e-01 & 0.2552e-00 & 0.2977e-00\\\hline
   FE-MsFEM     &  0.1127e-01 & 0.2090e-01 & 0.6843e-01
\\\hline
\end{tabular}
\end{center}
\end{table}

In the second example, we use the coefficient depicted in Fig~\ref{fig:3} that corresponds to a coefficient with background one and high permeability channels and inclusions with permeability values equal to $10^5$ and $8\times10^4$ respectively.
\begin{figure}[htp]
\centerline{\includegraphics[width=0.5\textwidth]{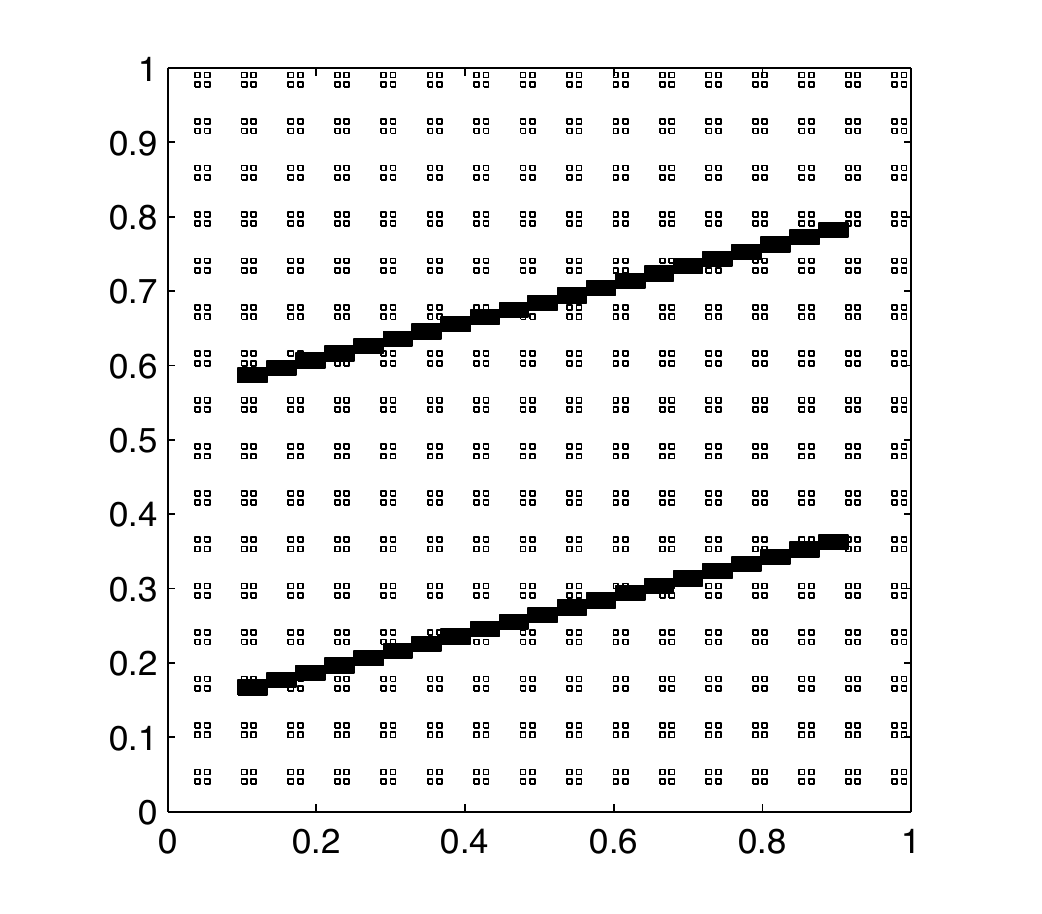}}
\caption{Permeability field: $a^\ep=10^5$ in two channels consisting of dark small rectangles; $a^\ep=8\times10^4$ in small square inclusions; $a^\ep=1$ otherwise. }
\label{fig:3}
\end{figure}
The results are listed in Table~\ref{Table:5}. We observe that our FE-MsFEM  gives much better results than the other two methods.
\begin{table}[htp]
\caption{Relative errors for the model problem with the permeability depicted in Fig.\ref{fig:3}. $H=1/32$, $\rho=h=1/1024$, $\ga_0=20$, $\ga_1=0.1$.}\label{Table:5}
\begin{center}
\begin{tabular}{|c|c|c|c|} \hline
 {Relative Error}  &  $L^2$ & $L^\infty$ &Energy norm \\\hline
   MsFEM    &  0.3546e-00 & 0.4007e-00 & 0.5943e-00\\\hline
   Mixed basis MsFEM    &  0.2243e-00 & 0.2596e-00 & 0.4997e-00\\\hline
   FE-MsFEM     & 0.4274e-02 & 0.1284e-01 & 0.7566e-01\\\hline
\end{tabular}
\end{center}
\end{table}

\section{Conclusions}
In this paper, we have developed a new numerical scheme for the elliptic multiscale problems which joints the oversampling MsFEM and the standard FEM together by using the penalty techniques. The idea is first to separate the research domain into two parts $\Om_1$ and $\Om_2=\Om\setminus\overline{\Om_1}$ that $\Om_1$ contains the boundary $\pa\Om$ where the oversampling MsFEM can not apply, and singular points (or regions) where the oversampling MsFEM is inefficient. Then we apply the standard FEM on a fine mesh of $\Om_1$ and the oversampling MsFEM on a coarse mesh of $\Om_2$. The two methods are jointed on the interface $\Ga=\pa\Om_1\cap\pa\Om_2$ of the fine and coarse meshes by penalizing the jumps of the function values as well as the fluxes of discrete solutions.

A rigorous and careful analysis has been given for the elliptic equation with periodic diffusion coefficient to show that, under some mild assumptions, if $\Ga$ is so chosen that $\dist(\Ga,\pa\Om)\gtrsim H$, then the $H^1$-error of our new method is of order
\[O\Big(\sqrt{\ep}+\frac{\ep}{H}+ H +\frac{h}{\ep}|\Om_1|+\frac{H^{2}}{\sqrt{\ep}}\Big),\]
which exactly consists of the oversampling MsFE approximation error in $\Om_2$, the FE approximation error in $\Om_1$, and the  error contributed by the penalizations on $\Ga$. Note that, for simplicity, we have only analyzed the linear version of FEM for the discretization on $\Om_1$.

Numerical experiments are carried out for the elliptic equations with periodic oscillating or random coefficients, as well as,  the multiscale problems with high contrast channels, to verify the theoretical findings and compare the performance of our FE-MsFEM with the standard MsFEM and Mixed basis MsFEM. It is shown that, the FE-MsFEM performs better than the other two methods in all cases and much better in some experiments.

There are several ways to improve further the performance of our FE-MsFEM. First, the linear FEM on $\Om_1$ can be apparently extended to higher order FEMs to reduce the error term related to $\Om_1$. Secondly, since $\Om_1$ may contains singularities, another interesting project is to consider a combination of adaptive FEM on local refined meshes on $\Om_1$ and oversampling MsFEM on $\Om_2$. Thirdly, based on existence numerical results for oversampling MsFEMs \cite{HW}, we  conjecture that the theoretical assumption of $\dist(\Ga,\pa\Om)\gtrsim H$ may be weaken to  $\dist(\Ga,\pa\Om)\ge C\ep$ (at least, in practice) for some constant $C$. These will be left as future studies.

\end{document}